\documentclass[10pts]{amsart}

\usepackage{amssymb}
\usepackage{graphicx}
\usepackage{amsfonts}
\usepackage{amsmath}
\usepackage{amsthm}
\usepackage{fancyhdr}
\usepackage{indentfirst}
\usepackage{hyperref}
\usepackage{comment}
\usepackage{color}
\usepackage{subcaption}
\usepackage{epsfig}
\usepackage{pst-grad} 
\usepackage{pst-plot} 
\usepackage[space]{grffile} 
\usepackage{etoolbox} 
\usepackage{mathrsfs} 

\newtheorem{theorem}{Theorem}[section]
\newtheorem{lemma}[theorem]{Lemma}

\theoremstyle{definition}
\newtheorem{definition}[theorem]{Definition}

\newtheorem{proposition}[theorem]{Proposition}
\newtheorem{question}{Open Question}

\theoremstyle{remark}

\numberwithin{equation}{section}



\newcommand{\R}{\mathbb{R}}
\newcommand{\N}{\mathbb{N}}
\newcommand{\Z}{\mathbb{Z}}

\newcommand{\B}{\mathbb{B}}
\newcommand{\Sph}{\mathbb{S}}










\newcommand{\Div}{\operatorname{div}}
\newcommand{\Tr}{\operatorname{Tr}}

\newcommand{\ind}{\operatorname{index}}

\begin{document}

\title[Free boundary minimal surfaces in the unit ball]{Free boundary minimal surfaces in the unit ball : recent advances and open questions}
\date{\today}

\author[Martin Li]{Martin Man-chun Li}
\address{Department of Mathematics, The Chinese University of Hong Kong, Shatin, N.T., Hong Kong}
\email{martinli@math.cuhk.edu.hk}

\begin{abstract}
In this survey, we discuss some recent results on free boundary minimal surfaces in the Euclidean unit-ball. The subject has been a very active field of research in the past few years due to the seminal work of Fraser and Schoen on the extremal Steklov eigenvalue problem. We review several different techniques of constructing examples of embedded free boundary minimal surfaces in the unit ball. Next, we discuss some uniqueness results for free boundary minimal disks and the conjecture about the uniqueness of critical catenoid. We also discuss several Morse index estimates for free boundary minimal surfaces. Moreover, we describe estimates for the first Steklov eigenvalue on such free boundary minimal surfaces and various smooth compactness results. Finally, we mention some sharp area bounds for free boundary minimal submanifolds and related questions.
\end{abstract}

\maketitle

\section{Introduction}
\label{S:intro}

Minimal surfaces have been one of the most extensively studied objects in differential geometry, owing to its internal beauty and the connection with a wide range of mathematics including complex analysis, partial differential equations, conformal geometry, low dimensional topology and mathematical physics etc. The classical Plateau problem asks for an area-minimizing disk whose boundary spans a given Jordan curve in $\R^3$. Shortly after Douglas and Rad\'{o} independently gave a positive answer to this question, Courant \cite{Courant40} proposed to study the corresponding Neumann boundary value problem for minimal disks in $\R^3$. This problem, called the \emph{free boundary problem}, asks for an area-minimizing disk whose boundary is lying on a given constraint surface $S$ in $\R^3$. Courant and his students \cite{Courant} proved the first existence results of such minimizers under certain assumptions, for example that $S$ is not simply-connected. The boundary regularity was then actively studied by several mathematicians including Lewy \cite{Lewy51}, Hildebrandt \cite{Hildebrandt69} and Nitsche \cite{Hildebrandt-Nitsche79}, just to name a few. We refer the readers to a very well-written survey by Hildebrandt \cite{Hildebrandt86} which summarized some of the major development up to 1986.

Before the 1980s, most attention was drawn to study \emph{area-minimizers}. When the constraint surface $S$ is convex, it does not support any non-trivial area minimizers. It is therefore important to look for \emph{stationary} solutions, in other words general critical points to the area functional. In a pioneering paper \cite{Nitsche85}, Nitsche initiated the study of free boundary minimal (and constant mean curvature) surfaces in the unit ball $\B^3$ of $\R^3$. Several examples are given and a number of questions are formulated. Using methods from harmonic maps and geometric measure theory, a number of existence results (see for example \cite{Struwe84} \cite{Gruter-Jost86}) were established for \emph{unstable} free boundary minimal disks inside any convex body in $\R^3$. A general existence theory of free boundary minimal disks was later developed by Fraser \cite{Fraser00} which works in the setting of Riemannian manifolds of any dimension. Effective versions of Fraser’s result were recently established by Lin, Sun and Zhou \cite{LSZ} (see also Laurain and Petrides \cite{LP}).

Despite these tremendous successes, still not much is known after the work of Nitsche on free boundary minimal surfaces in $\B^3$. The situation changes drastically after the groundbreaking work of Fraser and Schoen \cite{Fraser-Schoen11} \cite{Fraser-Schoen16}. They found a close relationship between free boundary minimal surfaces in the unit $n$-ball $\B^n$ and an extremal eigenvalue problem on compact surfaces with boundary. They studied the spectrum, called \emph{Steklov eigenvalues}, of the Dirichlet-to-Neumann map on surfaces with boundary and discovered that any smooth metric maximizing the first normalized Steklov eigenvalue must be realized by a free boundary minimal surface in $\B^n$. This is analogous to the study of the extremal problem for the first eigenvalue of the Laplacian on closed surfaces, which is intimately related to closed minimal surfaces in $\Sph^n$ (see \cite{Fraser-Schoen13} for an excellent overview). More surprisingly, it was shown in \cite{Fraser-Schoen11} and \cite{Fraser-Schoen16} that there are many similarities between free boundary minimal surfaces in $\B^n$ and closed minimal surfaces in $\Sph^n$. Since then, there has been a lot of interest in the research community on free boundary minimal surfaces. In this survey, we will describe some of the recent advances focusing on the question of existence and uniqueness for free boundary minimal surfaces in $\B^n$, as well as some of their analytic and geometric properties.

We would like to point out that recently there has also been substantial progress in the general existence and regularity theory for free boundary minimal hypersurfaces in Riemannian manifolds with boundary. Thanks to the recent major breakthroughs in the Almgren-Pitts min-max theory by Marques and Neves, which have already led to the resolution of several longstanding conjectures in geometry and topology (see the ICM proceedings \cite{Marques-ICM} \cite{Neves-ICM} for a more detailed discussion), we have also witnessed rapid development in the free boundary setting. Some recent works related to this include \cite{Li15} \cite{Li-Zhou16} \cite{DeLellis-Ramic18} \cite{GWZ18} \cite{Guang-Wang-Zhou18} \cite{Wang} \cite{Wang19} \cite{GLWZ} \cite{W1} \cite{W2}.

\vspace{1ex}

\textbf{Acknowledgements.} The author is greatly indebted to Prof. Richard Schoen and Prof. Ailana Fraser for their continuous encouragement and introducing him to the beautiful world of free boundary minimal surfaces. He is also grateful to Prof. Shing-Tung Yau for the valuable opportunity to deliver an invited lecture at the seventh International Congress of Chinese Mathematicians in Beijing and the first annual meeting of International Consortium of Chinese Mathematicians in Guangzhou. The author would also like to thank Antonio Ros, Romain Petrides and Renato Bettiol for their interest of this work and pointing out several references. The author is substantially supported by a research grant from the Research Grants Council of the Hong Kong Special Administrative Region, China [Project No.: CUHK 24305115] and CUHK Direct Grant [Project Code: 4053291].

\section{Definitions and preliminaries}
\label{S:Prelim}

In this section, we will introduce some definitions and notations which will be used throughout the rest of this paper. We also establish some foundational properties for free boundary minimal submanifolds in $\B^n$.

\subsection{Definitions}

Throughout this paper, we denote, for any $n \geq 3$, the closed $n$-dimensional Euclidean unit-ball by
\[ \B^n:=\{(x_1,x_2,\cdots,x_n)  \in \R^n: x_1^2 + x_2^2 + \cdots + x_n^2 \leq 1\}.\]
The boundary $\partial \B^n$ is the $(n-1)$-dimensional unit sphere denoted by $\Sph^{n-1}$.

Let $\Sigma$ be a $k$-dimensional smooth manifold with boundary $\partial \Sigma$. Consider a smooth immersion $\varphi:\Sigma \to \B^n$ such that $\varphi(\partial \Sigma) \subset \partial \B^n$, the extrinsic curvature of $\Sigma$ is described by a vector-valued symmetric two-tensor $A:T \Sigma \otimes T\Sigma \to N \Sigma$, called the \emph{second fundamental form} of $\Sigma$, defined by
\[ A(u,v):=(D_u v)^N, \]
where $D$ is the Euclidean connection on $\R^n$ and $(\cdot)^N$ is the normal component of a vector with respect to $\Sigma$. Here, $T\Sigma$ and $N\Sigma$ are respectively the tangent and normal bundle of the immersed submanifold $\Sigma$ in $\R^n$. The \emph{mean curvature vector} $\vec{H}$ of $\Sigma$ is then defined to be the trace of its second fundamental form, i.e. at each $p \in \Sigma$,
\[ \vec{H}(p):= \Tr A(p)=\sum_{i=1}^k (D_{e_i} e_i)^N(p) \]
where $\{e_1,\cdots,e_k\}$ is an orthonormal basis of the tangent space $T_p \Sigma$. The \emph{outward conormal} $\nu$ of $\partial \Sigma$ is defined such that at each $p \in \partial \Sigma$, $\nu(p)$ is the unique unit vector in $T_p\Sigma$ which is orthogonal to $T_p\partial \Sigma$ and pointing out of $\Sigma$.

\begin{definition}
An immersed (resp. embedded) submanifold $\varphi:\Sigma^k \to \B^n$ with $\varphi(\partial \Sigma) \subset \partial \B^n$ is said to be an immersed (resp. embedded) \emph{free boundary minimal submanifold} if both of the following holds:
\begin{itemize}
\item[(i)] $\Sigma$ is a minimal submanifold (i.e. $\vec{H} \equiv 0$);
\item[(ii)] $\Sigma$ meets $\partial \B^n$ orthogonally along $\partial \Sigma$ (i.e. $\nu \perp \partial \B^n$).
\end{itemize}
We often call $\Sigma$ a \emph{free boundary minimal surface} when $k=2$ and a \emph{free boundary minimal hypersurface} when $k=n-1$.
\end{definition}

Free boundary minimal submanifolds can be characterized variationally as critical points of the area functional as follows. Suppose we have a smooth one-parameter family of immersions $\varphi_t:\Sigma \to \B^n$ with $\varphi_t(\partial \Sigma) \subset \partial \B^n$ for all $t \in (-\epsilon,\epsilon)$. The first variational formula (see e.g. \cite[\S 1.3]{CM}) gives
\[ \delta \Sigma (X):=\left. \frac{d}{dt}\right|_{t=0} \textrm{Area}(\varphi_t(\Sigma))=- \int_\Sigma \langle \vec{H}, X \rangle \;da  + \int_{\partial \Sigma} \langle \nu, X \rangle \; ds,\]
where $X=\frac{\partial}{\partial t} \varphi_t$ is the variational vector field; $da$ and $ds$ are the $k$-dimensional and $(k-1)$-dimensional measures induced by $\varphi$ on $\Sigma$ and $\partial \Sigma$ respectively. From the above formula it is clear that $\Sigma$ is a free boundary minimal submanifold if and only if $\delta \Sigma(X)=0$ for all variational vector field $X$ along $\Sigma$

Note that the above discussions hold with $\B^n$ replaced by any $n$-dimensional Riemannian manifold with boundary. Nonetheless, when the ambient space is $\B^n$, there is another useful characterization of free boundary minimal submanifolds. Recall that (see e.g. \cite[Proposition 1.7]{CM}) a submanifold $\Sigma \subset \R^n$ is minimal if and only if the coordinate functions $x_i$, $i=1,\cdots,n$, of $\R^n$ restrict to harmonic functions on $\Sigma$ (with respect to the intrinsic Laplacian $\Delta_\Sigma$). On the other hand, since the outward unit normal of $\partial \B^n$ is given by the position vector. It is easy to see that the free boundary condition $\nu \perp \partial \B^n$ is equivalent to the condition that $\partial x_i/\partial \nu=x_i$ for $i=1,\cdots,n$.

We summarize our discussions above into the following theorem.

\begin{theorem}
\label{T:definitions}
Let $\varphi:\Sigma^k \to \B^n$ be an immersed $k$-dimensional submanifold with $\varphi (\partial \Sigma) \subset \partial \B^n$. Then the following statements are equivalent:
\begin{itemize}
\item[(1)] $\Sigma$ is a free boundary minimal submanifold.
\item[(2)] $\Sigma$ is a critical point of the area functional among all $k$-dimensional submanifolds in $\B^n$ with boundary lying on $\partial \B^n$.
\item[(3)] The coordinate functions $x_i$ of $\R^n$ restrict to harmonic functions on $\Sigma$ and they satisfy the Robin boundary condition $\partial x_i/\partial \nu=x_i$ along $\partial \Sigma$.
\end{itemize}
\end{theorem}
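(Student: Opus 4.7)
The plan is to establish the two equivalences (1) $\Leftrightarrow$ (2) and (1) $\Leftrightarrow$ (3) separately, assembling the formulas already displayed in the preceding paragraphs of Section~\ref{S:Prelim}.

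For (1) $\Leftrightarrow$ (2), the first variational formula does all of the work. If $\Sigma$ is free boundary minimal, then for any admissible variation $\varphi_t$ with $\varphi_t(\partial \Sigma) \subset \partial \B^n$, the initial velocity field $X$ satisfies $X|_{\partial \Sigma} \in T(\partial \B^n)$ pointwise; the bulk integrand $\langle \vec{H}, X\rangle$ vanishes because $\vec{H} \equiv 0$, and the boundary integrand $\langle \nu, X\rangle$ vanishes because $\nu \perp \partial \B^n$ while $X|_{\partial \Sigma}$ is tangent to $\partial \B^n$. Hence $\delta \Sigma(X)=0$. Conversely, to deduce (1) from (2), I would play the usual calculus-of-variations game: compactly supported interior test fields force $\vec{H} \equiv 0$ via the fundamental lemma, after which boundary-supported test fields $X$ whose restriction $X|_{\partial \Sigma}$ ranges over arbitrary sections of $T(\partial \B^n)|_{\partial \Sigma}$ force the component of $\nu$ tangent to $\partial \B^n$ to vanish, which is precisely $\nu \perp \partial \B^n$.

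For (1) $\Leftrightarrow$ (3), the minimality part is the classical identity $\Delta_\Sigma x_i = \langle \vec{H}, e_i\rangle$ cited from \cite{CM}, so $\vec{H} \equiv 0$ on $\Sigma$ if and only if every coordinate function $x_i$ is harmonic on $\Sigma$. For the boundary condition, fix $p \in \partial \Sigma \subset \Sph^{n-1}$ and recall that the outward unit normal of $\partial \B^n$ at $p$ is the position vector $p$ itself. Since $\nu(p) \in T_p\Sigma \subset \R^n$ and $x_i$ is the restriction of a linear function on $\R^n$ with Euclidean gradient $e_i$, one computes
\[ \frac{\partial x_i}{\partial \nu}(p) = \langle e_i, \nu(p)\rangle = \nu_i(p), \qquad x_i(p)=p_i. \]
Thus the Robin condition $\partial x_i/\partial \nu = x_i$ for every $i=1,\ldots,n$ is equivalent to $\nu(p) = p$ as vectors in $\R^n$; since $|\nu(p)|=|p|=1$ and $\nu$ points outward along $\Sigma$, this coincides with the free boundary condition $\nu \perp \partial \B^n$.

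The main technical wrinkle lies in the boundary half of (2) $\Rightarrow$ (1): given an arbitrary section $Y$ of $T(\partial \B^n)|_{\partial \Sigma}$, one must produce a genuine admissible one-parameter family $\varphi_t$ preserving the constraint $\varphi_t(\partial \Sigma)\subset \partial \B^n$ whose initial velocity recovers $Y$ on $\partial \Sigma$, so that the variational formula can actually be invoked. The standard remedy is to extend $Y$ to a smooth vector field on a collar neighborhood of $\partial \B^n$ in $\B^n$ that is tangent to $\partial \B^n$ (for instance via orthogonal projection onto $T(\partial \B^n)$) and to flow along its diffeomorphism group. Once this extension is in hand, the remainder of the argument is a direct bookkeeping of the formulas already assembled above.
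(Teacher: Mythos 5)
Your proposal is correct and follows essentially the same route as the paper, which records Theorem \ref{T:definitions} as a summary of the preceding discussion: the first variation formula yields (1) $\Leftrightarrow$ (2), and the classical fact that minimality is equivalent to harmonicity of the restricted coordinate functions, together with the observation that the outward normal of $\partial \B^n$ is the position vector, yields (1) $\Leftrightarrow$ (3). The extra details you supply (interior versus boundary test fields, extending a tangential field to a collar to build admissible variations, and the identification $\partial x_i/\partial \nu = \nu_i$, $x_i = p_i$ forcing $\nu(p)=p$) are exactly the standard bookkeeping the paper leaves implicit.
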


\subsection{Basic properties}

Before we proceed, we derive several geometric properties of free boundary minimal submanifolds in $\B^n$. In the remaining part of this section, we denote $\Sigma$ to be a $k$-dimensional immersed free boundary minimal submanifold in $\B^n$. 

First of all, we show that $\partial \Sigma$ must be non-empty and $\Sigma$ cannot touch $\partial \B^n$ at an interior point. Both of these are consequences of the maximum principle.

\begin{proposition}[Properness]
All free boundary minimal submanifolds $\Sigma$ in $\B^n$ are proper, i.e. $\varphi(\Sigma) \cap \partial \B^n=\varphi(\partial \Sigma) \neq \emptyset$.
\end{proposition}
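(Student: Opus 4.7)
The plan is to produce a single strictly subharmonic function on $\Sigma$ whose values are controlled by the position in $\B^n$, and then feed this into the strong maximum principle to rule out both interior contact with $\partial \B^n$ and the possibility of an empty boundary.

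First I would compute $\Delta_\Sigma |x|^2$, where $|x|^2 = x_1^2 + \cdots + x_n^2$ is restricted to $\Sigma$. Expanding,
\[
\Delta_\Sigma |x|^2 = 2\sum_{i=1}^n \bigl( x_i \, \Delta_\Sigma x_i + |\nabla_\Sigma x_i|^2 \bigr).
\]
By Theorem \ref{T:definitions}(3), each coordinate function $x_i$ is harmonic on $\Sigma$, so the first term vanishes. For the gradient term, I would choose a local orthonormal frame $\{e_1,\dots,e_k\}$ for $T\Sigma$ and observe that
\[
\sum_{i=1}^n |\nabla_\Sigma x_i|^2 = \sum_{i=1}^n \sum_{j=1}^k \langle e_j,\partial_i\rangle^2 = \sum_{j=1}^k |e_j|^2 = k,
\]
so $\Delta_\Sigma |x|^2 \equiv 2k > 0$. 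Thus $|x|^2$ is a smooth, strictly subharmonic function on $\Sigma$ that is bounded above by $1$ since $\varphi(\Sigma) \subset \B^n$.

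Next I would rule out interior points of $\Sigma$ being mapped to $\partial\B^n$. Suppose $\varphi(p) \in \partial\B^n$ for some $p$ in the interior of $\Sigma$. Then $|x|^2 \circ \varphi$ achieves its maximum value $1$ at the interior point $p$, which is forbidden by the strong maximum principle for the elliptic operator $\Delta_\Sigma$ since $\Delta_\Sigma|x|^2 > 0$. Hence $\varphi^{-1}(\partial\B^n) \subset \partial\Sigma$, and by the free boundary condition $\varphi(\partial\Sigma) \subset \partial\B^n$ we get the set equality $\varphi(\Sigma) \cap \partial\B^n = \varphi(\partial\Sigma)$.

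Finally, to show $\partial\Sigma \neq \emptyset$, I would note that $\Sigma$ is (tacitly) assumed compact, so $|x|^2$ attains its maximum on $\Sigma$. If $\partial\Sigma$ were empty, this maximum would be an interior maximum, contradicting strict subharmonicity as above. I do not anticipate a major obstacle here; the only technical point worth flagging is that the strong maximum principle is being applied to a smooth function on the interior of an immersed (not necessarily embedded) submanifold, which is legitimate because subharmonicity and maxima are local notions pulled back to the abstract domain $\Sigma$.
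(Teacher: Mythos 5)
Your proposal is correct and follows essentially the same route as the paper: the identity $\Delta_\Sigma |x|^2 = 2k$ coming from harmonicity of the coordinate functions, strict subharmonicity, and the maximum principle forcing the maximum of $|x|^2$ onto $\partial\Sigma$ (hence $\partial\Sigma\neq\emptyset$ by compactness). The extra details you spell out (the frame computation of $\sum_i|\nabla_\Sigma x_i|^2=k$, the remark that everything is local on the immersed domain) are fine elaborations of the same argument.
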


\begin{proof}
Since the coordinate functions $x_i$ are harmonic functions on $\Sigma$ (with respect to the induced metric), if we let $|x|^2=\sum_{i=1}^n x_i^2$, we have
\begin{equation}
\label{E:Laplace-r^2}
\Delta_\Sigma |x|^2 = 2 \sum_{i=1}^n x_i \Delta_\Sigma x_i +2 \sum_{i=1}^n |\nabla^\Sigma x_i|^2= 2k.
\end{equation}
Therefore, $|x|^2$ is a strictly sub-harmonic function on $\Sigma$ and hence the maximum can only be achieved on $\partial \Sigma$. This implies the proposition.
\end{proof}

Next, we give a relationship between the volume of $\Sigma$ and the boundary volume of $\partial \Sigma$.

\begin{proposition}
\label{P:length-area}
$k|\Sigma|=|\partial \Sigma|$.
\end{proposition}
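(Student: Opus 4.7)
The plan is to integrate the identity $\Delta_\Sigma |x|^2 = 2k$ established in \eqref{E:Laplace-r^2} over $\Sigma$ and apply the divergence theorem to convert the bulk integral into a boundary integral, which the free boundary (Robin) condition will evaluate explicitly.

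More precisely, first I would integrate \eqref{E:Laplace-r^2} to obtain
\[ 2k |\Sigma| = \int_\Sigma \Delta_\Sigma |x|^2 \, da = \int_{\partial \Sigma} \frac{\partial |x|^2}{\partial \nu} \, ds, \]
using the standard divergence theorem on manifolds with boundary. Next I would compute the conormal derivative on $\partial \Sigma$: by the chain rule,
\[ \frac{\partial |x|^2}{\partial \nu} = 2 \sum_{i=1}^n x_i \frac{\partial x_i}{\partial \nu}, \]
and then invoke the characterization from Theorem \ref{T:definitions}(3), namely $\partial x_i / \partial \nu = x_i$ along $\partial \Sigma$, together with the fact that $|x|^2 = 1$ on $\partial \Sigma$ since $\varphi(\partial \Sigma) \subset \partial \B^n$. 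This gives $\partial |x|^2 / \partial \nu \equiv 2$ on $\partial \Sigma$.

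Combining, $2k|\Sigma| = 2 |\partial \Sigma|$, which yields the desired identity. There is no real obstacle here; the only mildly non-trivial point is making sure that all the hypotheses needed to use the divergence theorem (smoothness of $\Sigma$ up to its boundary, boundedness of $|\Sigma|$) are in force, which is automatic since $\Sigma$ is a smooth compact immersed submanifold of $\B^n$. The proof is essentially a direct application of the preceding proposition's computation combined with the free boundary condition.
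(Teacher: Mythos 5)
Your proposal is correct and follows exactly the paper's own argument: integrate the identity $\Delta_\Sigma |x|^2 = 2k$ from \eqref{E:Laplace-r^2}, convert to a boundary integral, and use the Robin condition $\partial x_i/\partial \nu = x_i$ together with $|x|^2 = 1$ on $\partial \Sigma$ to evaluate the conormal derivative as $2$. Nothing is missing.
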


\begin{proof}
We integrate (\ref{E:Laplace-r^2}) over $\Sigma$ on both sides. Applying integration by parts and using the fact that $\partial x_i/\partial \nu=x_i$, $i=1,\cdots,n$, on $\partial \Sigma$, 
\[ 2|\partial \Sigma|=2 \int_{\partial \Sigma} \sum_{i=1}^n x_i^2 \; ds =\int_{\partial \Sigma} \frac{\partial}{\partial \nu} |x|^2 \; ds =\int_\Sigma \Delta_\Sigma |x|^2 \; da = 2k |\Sigma|. \]
In the first equality we have used that $\sum_{i=1}^n x_i^2=1$ on $\partial \Sigma$ since $\varphi(\partial \Sigma) \subset \partial \B^n$.
\end{proof}

The next proposition shows that the boundary $\partial \Sigma$ must be \emph{balanced}, i.e. the center of mass of $\partial \Sigma$ is the origin.

\begin{proposition}
$\int_{\partial \Sigma} x_i \; ds=0$ for each $i=1,\cdots,n$.
\end{proposition}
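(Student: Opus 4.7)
The plan is to use directly characterization (3) from Theorem~\ref{T:definitions}, namely that each coordinate function $x_i$ is harmonic on $\Sigma$ and satisfies the Robin boundary condition $\partial x_i/\partial \nu = x_i$ on $\partial \Sigma$. These two facts can be combined by a single application of the divergence theorem (integration by parts).

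Concretely, since $\varphi : \Sigma \to \B^n$ is a free boundary minimal immersion, we have $\Delta_\Sigma x_i = 0$ on $\Sigma$. Integrating over $\Sigma$ and invoking the divergence theorem,
\[
 0 = \int_{\Sigma} \Delta_\Sigma x_i \, da = \int_{\partial \Sigma} \frac{\partial x_i}{\partial \nu} \, ds.
\]
By the free boundary condition, $\partial x_i / \partial \nu = x_i$ along $\partial \Sigma$, so the right-hand side equals $\int_{\partial \Sigma} x_i \, ds$, which therefore vanishes for each $i = 1, \dots, n$.

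There is no real obstacle here: the statement is essentially the content of the Robin condition combined with harmonicity of the coordinate functions, and the computation is parallel in spirit to (but simpler than) the one carried out in Proposition~\ref{P:length-area}, where $|x|^2$ played the role of the test function. The only minor care needed is to note that $\Sigma$ is compact (which is implicit in the setup, since $\varphi(\Sigma) \subset \B^n$ and by the properness proposition $\varphi^{-1}(\partial \B^n) = \partial \Sigma$), so the integration by parts is justified without boundary terms at infinity.
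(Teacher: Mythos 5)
Your argument is correct and is essentially identical to the paper's own proof: both use harmonicity of the coordinate functions, the divergence theorem, and the Robin condition $\partial x_i/\partial \nu = x_i$ to conclude $\int_{\partial\Sigma} x_i\, ds = \int_\Sigma \Delta_\Sigma x_i\, da = 0$. No issues.
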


\begin{proof}
Note that each $x_i$ is harmonic in $\Sigma$ and satisfies $\partial x_i/\partial \nu=x_i$ along $\partial \Sigma$, integrating by parts gives
\[ \int_{\partial \Sigma} x_i \; ds =\int_{\partial \Sigma} \frac{\partial x_i}{\partial \nu} \; ds = \int_\Sigma \Delta_\Sigma x_i \; da =0.\]
\end{proof}

Since $\pi_1(\B^n)=0$, we have the following well-known fact from topology.

\begin{proposition}
If $\Sigma^k \subset \B^n$ is an embedded hypersurface (i.e. $k=n-1$), then $\Sigma$ is orientable.
\end{proposition}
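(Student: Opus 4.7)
The plan is to reduce the statement to the standard topological fact that any closed embedded hypersurface in a simply connected orientable manifold is orientable, and to access that fact by doubling $\Sigma$ across the boundary sphere. This uses the free boundary condition in an essential way. I will carry this out in three steps.

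\emph{Step 1 (Doubling).} The topological double $\widetilde{M} := \B^n \cup_{\Sph^{n-1}} \B^n$ carries a natural smooth structure making it diffeomorphic to $\Sph^n$. The free boundary condition $\nu \perp \partial \B^n$ is exactly what is needed for the reflection of $\Sigma$ across $\partial \B^n$ to glue smoothly to $\Sigma$, yielding a closed embedded hypersurface $\widetilde{\Sigma} \subset \widetilde{M} \cong \Sph^n$. (If $\Sigma$ met $\partial \B^n$ at a non-orthogonal angle, the reflected surface would have a corner along $\partial \Sigma$.) Since $\Sigma$ is a codimension-zero submanifold with boundary of $\widetilde{\Sigma}$, an orientation on $\widetilde{\Sigma}$ restricts to an orientation on $\Sigma$, so it suffices to prove $\widetilde{\Sigma}$ is orientable.

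\emph{Step 2 (Orientability vs.\ two-sidedness).} Because $\Sph^n$ is orientable, the splitting $T\Sph^n|_{\widetilde{\Sigma}} = T\widetilde{\Sigma} \oplus N\widetilde{\Sigma}$ together with the Whitney sum formula gives $w_1(T\widetilde{\Sigma}) = w_1(N\widetilde{\Sigma})$. Hence $\widetilde{\Sigma}$ is orientable if and only if its normal line bundle $N\widetilde{\Sigma}$ is trivial, i.e., if and only if $\widetilde{\Sigma}$ is two-sided in $\Sph^n$.

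\emph{Step 3 (Two-sidedness from $\pi_1 = 0$).} Suppose for contradiction that $N\widetilde{\Sigma}$ is non-trivial. Then there exists a smoothly embedded loop $\gamma \subset \widetilde{\Sigma}$ along which $N\widetilde{\Sigma}|_\gamma$ is the M\"obius line bundle over $S^1$. In a tubular neighborhood of $\gamma$ in $\Sph^n$, following one side of $\widetilde{\Sigma}$ once around $\gamma$ arrives on the opposite side, so closing up by a short transverse chord across $\widetilde{\Sigma}$ produces a loop $\widetilde{\gamma} \subset \Sph^n$ whose mod-$2$ intersection number with $\widetilde{\Sigma}$ is $1$. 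But $\pi_1(\Sph^n) = 0$ for $n \geq 3$, so $\widetilde{\gamma}$ is null-homotopic; the mod-$2$ intersection number of a loop with a closed embedded hypersurface is a homotopy invariant, forcing it to vanish. This contradiction shows $N\widetilde{\Sigma}$ is trivial, completing the proof.

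The main obstacle is Step 3: realizing non-triviality of $N\widetilde{\Sigma}$ geometrically as a M\"obius loop and then producing an explicit loop of odd mod-$2$ intersection in its tubular neighborhood. This is standard but requires the classification of real line bundles over $S^1$ together with a careful transversality setup. Note that working directly with $\Sigma \subset \B^n$ would force a null-homotopy to cross $\partial \Sigma$ in general (since $(n-2) + 2 \geq n-1$), so the mod-$2$ intersection number is no longer a homotopy invariant; the doubling in Step 1 bypasses this issue entirely, and is the one place the free boundary hypothesis is used.
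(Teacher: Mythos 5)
Your argument is correct, but it takes a different route from the paper, which offers no detailed proof at all: it simply invokes the well-known fact that $\pi_1(\B^n)=0$ forces a properly embedded hypersurface to be two-sided, hence orientable. The direct version of that argument runs entirely inside the ball: restrict to the open ball $\mathring{\B}^n$, where $\Sigma\setminus\partial\Sigma$ is a properly embedded hypersurface \emph{without} boundary; any loop in the interior bounds a null-homotopy that can be kept in the interior (simple connectivity of the open ball), so it never meets $\partial\Sigma$, and the mod-$2$ intersection number with $\Sigma\setminus\partial\Sigma$ is a well-defined homotopy invariant there. In particular your closing remark is slightly off: one does not need the doubling to rescue homotopy invariance, and consequently the free boundary (orthogonality) hypothesis is not actually needed for this proposition --- proper embeddedness suffices, which matches the way the paper states it as a purely topological fact. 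What your doubling approach buys is a reduction to the cleanest textbook setting (a closed embedded hypersurface in $\Sph^n$, where Steps 2 and 3 are completely standard), at the mild cost of using orthogonality to make the doubled hypersurface a submanifold across the seam (a priori only $C^1$ there unless one also invokes the reflection principle for minimal surfaces, but $C^1$ is enough for the transversality and intersection-number arguments). Both routes are sound; yours proves a slightly narrower statement with slightly more machinery, while the paper's implicit argument is the more economical one.
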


\section{Existence results}
\label{S:Existence}

While there are no closed minimal submanifolds in $\R^n$, there do exist some interesting non-trivial examples of free boundary minimal submanifolds in $\B^n$. It is a difficult problem to construct embedded examples of free boundary minimal submanifolds in $\B^n$. For a long time the only known examples in $\B^3$ were given by the equatorial disk and the critical catenoid \cite{Nitsche85}. The central question is:

\begin{question}
Are there any topological obstructions for a $k$-dimension submanifold $\Sigma$ to be a free boundary minimal submanifold in $\B^n$? Does it depend on whether $\Sigma$ is immersed or embedded?
\end{question}

Despite many new existence results in the past few years, we still do not have a satisfactory answer to \textbf{Open Question 1} even in the case $k=2=n-1$. It is interesting to compare it with the analogous question for closed minimal submanifolds in $\Sph^n$. When $n=3$, this analogous question was solved completely by Lawson \cite{Lawson70}, who proved that every closed surface (orientable or not) except $\R \mathbb{P}^2$ can be minimally \emph{embedded} into $\Sph^3$. To indicate the subtlety of the question in the free boundary setting, it is yet unknown whether there exists a free boundary minimal surface in $\B^3$ with genus $1$ and one boundary component.

\subsection{Low cohomogeneity examples}

The simplest examples of free boundary minimal submanifolds are the \emph{equatorial $k$-disks} $\B^k \subset \B^n$ given by the intersection of $\B^n$ with any $k$-dimensional subspace of $\R^n$. These examples are flat and totally geodesic. When $k=2$, we will sometimes denote it by $\mathbb{D}$.

To look for the next simplest examples, it is natural to impose maximal symmetries. The first example of such is described by Nitsche \cite{Nitsche85} for the case $k=2=n-1$. Consider the surface of revolution (about the $x_3$-axis) given by the equation
\[ \sqrt{x_1^2 + x_2^2} = \alpha \cosh \left( \frac{x_3}{\alpha} \right)\]
which describes a classical complete embedded minimal surface in $\R^3$ called the \emph{catenoid} (of certain scale described by the parameter $\alpha>0$). By a simple direct calculation, one can show that the catenoid defined above intersects $\partial \B^n$ orthogonally if and only if $\alpha=(\beta^2 +\cosh^2 \beta)^{-1/2}$ where $\beta>0$ is the unique positive solution to the (transcendental) equation
\[ \beta=\coth \beta.\]
Numerically, we have $\beta \approx 1.19968$ and $\alpha \approx 0.460485$. The restriction of such a catenoid in $\B^3$ yields an embedded rotationally symmetric free boundary minimal annulus called the \emph{critical catenoid} \cite{Fraser-Schoen11}. 

\begin{figure}[h]
    \centering
    \begin{subfigure}{.43\textwidth}
        \centering
        \includegraphics[width=1\textwidth]{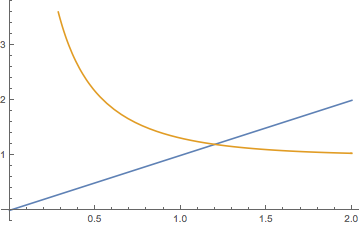}
    \end{subfigure}
    \begin{subfigure}{.43\textwidth}
        \centering
        \includegraphics[width=1\textwidth]{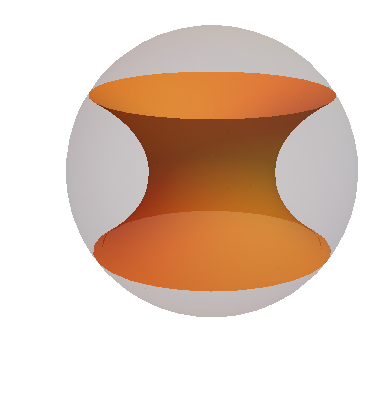}
    \end{subfigure}
    \caption{The critical catenoid (on the left is a plot of the curves $y=x$ and $y=\coth x$ which intersect at $\beta \approx 1.19968$)}
    \label{Pic:catenoid}
\end{figure}

In higher dimensions there also exists a complete embedded $O(n-1)$-symmetric minimal hypersurface in $\R^n$ (see e.g. \cite{Tam-Zhou09}). After possibly a rotation and translation, we can assume that such a hypersurface is rotationally invariant with respect to the $x_n$-axis. As before, up to the scale $\alpha>0$ the complete $(n-1)$-catenoid is given by
\[ \sqrt{x_1^2 + x_2^2 + \cdots +x_n^2}= \alpha f \left( \frac{x_n}{\alpha} \right) \]
where $f:(-L_n,L_n) \to (0,\infty)$ is the unique maximally defined positive solution to the second order ordinary differential equation $f''=(n-2) f^{2n-5}$ with $f(0)=1$ and $f'(0)=0$. Again, a direct computation shows that such a minimal hypersurface intesects $\partial \B^n$ orthogonally if and only if $\alpha=(\beta^2+f(\beta)^2)^{-1/2}$ where $\beta \in (-L_n,L_n)$ is the unique positive solution to the equation (see \cite[\S 2.1]{Smith-Stern-Tran-Zhou17} for more details)
\[ \beta f'(\beta)=f(\beta).\]
Therefore, we arrive at the following:
\begin{theorem}[Nitsche \cite{Nitsche85}, Fraser-Schoen \cite{Fraser-Schoen11}, Smith-Stern-Tran-Zhou \cite{Smith-Stern-Tran-Zhou17}]
\label{T:catenoid-exists}
For each $n \geq 3$, there exists an embedded rotationally symmetric (i.e. $O(n-1)$-invariant) free boundary minimal hypersurface $\mathbb{K}^{n-1}$ in $\B^n$, each of which is homeomorphic to the cylinder $\Sph^{n-2} \times [-1,1]$. We call $\mathbb{K}^{n-1}$ the \emph{$(n-1)$-dimensional critical catenoid}. When $n=3$, we simply call it the \emph{critical catenoid} denoted by $\mathbb{K}$.
\end{theorem}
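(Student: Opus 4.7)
The plan is to exploit the $O(n-1)$-symmetry to reduce the construction to an ODE, and then to show that the free boundary condition singles out a unique admissible scale. Writing the hypersurface as the rotational graph $r = f(z)$ about the $x_n$-axis (with $r = \sqrt{x_1^2 + \cdots + x_{n-1}^2}$), the minimality equation $\vec H = 0$ becomes $f f'' = (n-2)(1+(f')^2)$. A standard first integral, under the normalization $f(0) = 1$, $f'(0) = 0$, converts this into the autonomous form $f'' = (n-2) f^{2n-5}$ stated in the theorem. Standard ODE theory then produces a unique positive even solution on a maximal symmetric interval $(-L_n, L_n)$; for $n = 3$ this is $f(z) = \cosh z$ with $L_3 = +\infty$, while for $n \geq 4$ one has $L_n < \infty$ with $f$ and $f'$ both blowing up at the endpoints.

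I would then impose the free boundary condition on the rescaled profile $z \mapsto \alpha f(z/\alpha)$. The surface meets $\partial \B^n$ at $z = \pm \alpha \beta$ provided $\alpha^2(\beta^2 + f(\beta)^2) = 1$, and orthogonality — that the position vector $(f(\beta), \beta)$ be parallel to the outward unit normal of the profile curve in the $(r,z)$-plane — reduces to the transcendental equation $\beta f'(\beta) = f(\beta)$. For $n = 3$ with $f = \cosh$ this becomes $\beta = \coth \beta$; since $y = \beta$ is strictly increasing from $0$ to $+\infty$ on $(0,\infty)$ while $y = \coth \beta$ decreases strictly from $+\infty$ to $1$, they cross exactly once, giving $\beta \approx 1.19968$ as claimed.

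For general $n \geq 4$ the main obstacle is to obtain the analogous existence and uniqueness of a positive root of $g(\beta) := \beta f'(\beta) - f(\beta)$ in $(0, L_n)$. One has $g(0) = -1 < 0$, and $g'(\beta) = \beta f''(\beta) > 0$ for $\beta > 0$ (since $f'' > 0$ from the autonomous ODE and positivity of $f$), so $g$ is strictly monotone increasing. To see that $g \to +\infty$ at the right endpoint, I would use the first integral $(f')^2 = f^{2n-4} - 1$, obtained by multiplying the autonomous ODE by $2f'$ and integrating: as $f(\beta) \to \infty$, this forces $f'(\beta) \sim f(\beta)^{n-2}$, which grows much faster than $f(\beta)$ itself, so $\beta f'(\beta) - f(\beta) \to +\infty$. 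The intermediate value theorem then yields a unique root $\beta_n$, the scale is fixed by $\alpha_n = (\beta_n^2 + f(\beta_n)^2)^{-1/2}$, and embeddedness together with the topological identification $\mathbb{K}^{n-1} \cong \Sph^{n-2} \times [-1, 1]$ follows from positivity of $f$ on $[-\beta_n, \beta_n]$. The most delicate step in the plan is the blow-up analysis of $f$ and $f'$ at $L_n$ needed to verify $g \to +\infty$; everything else then reduces to elementary calculus.
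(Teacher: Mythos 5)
Your proposal is correct and takes essentially the same route as the paper: reduce by the $O(n-1)$-symmetry to the profile ODE $f''=(n-2)f^{2n-5}$ with $f(0)=1$, $f'(0)=0$, and fix the scale by $\beta f'(\beta)=f(\beta)$, $\alpha=(\beta^2+f(\beta)^2)^{-1/2}$ (the paper cites \cite{Smith-Stern-Tran-Zhou17} for precisely the details you supply, namely the first integral $(f')^2=f^{2n-4}-1$ and the monotonicity/blow-up argument giving a unique root $\beta$). One slip of wording: the free boundary condition says the position vector is parallel to the \emph{tangent} of the profile curve at its endpoint (the outward conormal), not to its normal; the equation $\beta f'(\beta)=f(\beta)$ you write is exactly this tangency condition, so nothing downstream is affected.
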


It is not hard to see from an ODE uniqueness argument that the equatorial disks and the critical catenoids exhaust all the cohomogeneity one examples. Some other low cohomogeneity examples were considered by Q.-M. Wang in \cite{Wa}. In \cite{Freidin-Gulian-McGrath17}, Freidin, Gulian and McGrath considered cohomogeneity two examples of free boundary minimal hypersurfaces in $\B^n$ for $n \geq 4$ which are $O(k_1)\times O(k_2)$-invariant with $k_1+k_2=n$. By analyzing the ODEs corresponding to the free boundary minimal surface equation, they proved the following:

\begin{theorem}[Freidin-Gulian-McGrath \cite{Freidin-Gulian-McGrath17}]
\label{T:low-cohomo}
Let $k_1,k_2>1$ be any positive integers such that $k_1+k_2=n$. 
\begin{itemize}
\item[(1)] For $n<8$, there exists an infinite family $\{\Sigma^{FGM}_{k_1,k_2}(\ell)\}_{\ell \in \N}$ of mutually non-congruent, embedded, $O(k_1) \times O(k_2)$-invariant free boundary minimal hypersurfaces in $\B^n$, each of which is homeomorphic to $\B^{k_1} \times \Sph^{k_2-1}$. 
\item[(2)] For $n \geq 8$, there exists an embedded $O(k_1)\times O(k_2)$-invariant free boundary minimal hypersurface $\Sigma^{FGM}_{k_1,k_2}$ in $\B^n$, each of which is homeomorphic to $\Sph^{k_1-1} \times \Sph^{k_2-1} \times [-1,1]$.
\end{itemize}
\end{theorem}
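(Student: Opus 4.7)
The plan is to exploit the $O(k_1) \times O(k_2)$-symmetry to reduce the problem to studying certain planar curves. Set $u = |x'|$ and $v = |x''|$ where $x = (x', x'') \in \R^{k_1} \times \R^{k_2}$. An $O(k_1) \times O(k_2)$-invariant hypersurface $\Sigma \subset \B^n$ corresponds to a curve $\gamma$ in the quarter-disk $Q = \{(u,v) : u, v \geq 0,\ u^2 + v^2 \leq 1\}$. Integrating the volume form over the $O(k_1) \times O(k_2)$-orbits shows that the area of $\Sigma$ equals a dimensional constant times the weighted length
\begin{equation*}
L(\gamma) = \int_\gamma u^{k_1-1} v^{k_2-1} \, ds,
\end{equation*}
so minimality of $\Sigma$ is equivalent to $\gamma$ being a geodesic in the conformally Euclidean metric $g = (u^{k_1-1} v^{k_2-1})^2 (du^2 + dv^2)$ on the interior of $Q$, and the free boundary condition translates to $\gamma$ meeting the arc $A = \{u^2 + v^2 = 1\}$ orthogonally (the Euclidean orthogonality agreeing with $g$-orthogonality since $g$ is conformally Euclidean). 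Embeddedness of $\Sigma$ amounts to $\gamma$ being simple and meeting $\partial Q$ only at its endpoints. Two configurations are relevant: curves from the $v$-axis $\{u=0\}$ to $A$ generate the topology $\B^{k_1} \times \Sph^{k_2-1}$ of item (1), while curves from $A$ back to $A$ generate the cylinder topology of item (2).

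For part (1), I would set up a shooting argument parametrized by $v_0 \in (0,1)$, the starting height on the $v$-axis; smoothness of $\Sigma$ at the singular orbit forces the initial direction to be horizontal, so ODE uniqueness picks out a single trajectory $\gamma_{v_0}$. One then studies the phase portrait of the induced Euler--Lagrange system. Its crucial feature is an interior equilibrium trajectory --- the \emph{Clifford-type point} of $Q$, whose coordinates are determined by $k_1, k_2, n$ --- and the nature of the linearization there. A direct computation shows that the eigenvalues of this linearization become complex precisely when $n < 8$, so in this low-dimensional regime nearby trajectories $\gamma_{v_0}$ spiral infinitely many times around the Clifford point before finally escaping. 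For each positive integer $\ell$ one can then select, via an intermediate value argument on the exit angle, a trajectory that completes exactly $\ell$ half-turns and subsequently strikes $A$ at a right angle; these are the desired $\Sigma^{FGM}_{k_1,k_2}(\ell)$, mutually non-congruent because they have different winding numbers.

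For part (2), when $n \geq 8$ the eigenvalues at the Clifford point are real, so the phase portrait is saddle-like rather than spiraling and trajectories flow monotonically past it. In this regime I would construct a single cylindrical example by shooting from $A$: consider curves entering $Q$ orthogonally at a point of $A$ strictly above the diagonal $u = v$. Varying the entry point and applying an intermediate value argument on a suitable shooting parameter (for instance the final angle at which the trajectory re-encounters $A$), one obtains a trajectory that reaches $A$ again below the diagonal and meets it orthogonally. The monotone character of the high-dimensional flow prevents the trajectory from doubling back on itself and thus preserves embeddedness, producing an $O(k_1)\times O(k_2)$-invariant free boundary minimal cylinder $\Sigma^{FGM}_{k_1,k_2}$.

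The main obstacle will be the detailed dynamical-systems analysis underlying the threshold $n = 8$: verifying that the linearization at the Clifford point has complex eigenvalues if and only if $n < 8$, controlling the global behavior of the trajectories $\gamma_{v_0}$ carefully enough to extract a distinct embedded solution for each winding number $\ell$, and ruling out self-intersections of the profile curves that would destroy embeddedness of $\Sigma$. This parallels the classical Simons-type analysis of equivariant minimal cones in $\R^n$, where the same critical dimension arises; the new technical content is the handling of the free boundary constraint on the arc $A$, which replaces the closed-geodesic condition of the spherical setting by a transversality condition of codimension one and so requires an additional shooting-and-continuation step to produce actual solutions.
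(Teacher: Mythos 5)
Your proposal is essentially the approach the paper points to: the survey gives no proof of this theorem, attributing it to Freidin--Gulian--McGrath and describing it precisely as solving the ODE system obtained after imposing the $O(k_1)\times O(k_2)$ symmetry, with the $n=8$ dichotomy tied to the Clifford-type minimal cone, which is exactly your reduction to weighted geodesics $\int_\gamma u^{k_1-1}v^{k_2-1}\,ds$ in the quarter disk plus a shooting/phase-plane analysis. The only caveat is a cosmetic one: the ``Clifford point'' is not a rest point of the geodesic flow in the $(u,v)$-quadrant itself but becomes an equilibrium only after passing to polar/logarithmic (scale-invariant) coordinates along the cone ray, where the indicial roots are complex precisely for $n<8$; with that understood, your outline matches the cited construction.
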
 

For case (1) above, as $\ell \to \infty$, the family $\Sigma^{FGM}_{k_1,k_2}(\ell)$ converges in the Hausdorff sense (and smoothly away from the origin) to the minimal cone over the Clifford minimal hypersurface $\Sph^{k_1-1}\left( \sqrt{\frac{k_1-1}{n-2}}\right) \times \Sph^{k_2-1}\left( \sqrt{\frac{k_2-1}{n-2}} \right)$ in $\Sph^{n-1}$. The examples above are obtained by solving the ODEs with suitable boundary conditions after imposing the $O(k_1) \times O(k_2)$ symmetry (similar methods were employed by Hsiang and Lawson \cite{Hsiang-Lawson71} to construct closed minimal submanifolds in $\Sph^n$). Note that there is an intriguing dichotomy feature in Theorem \ref{T:low-cohomo} depending on the dimension $n$, which is related to the existence of non-flat minimal cones in higher dimensions \cite{Simons68}.

\subsection{Extremal eigenvalue problem}

As mentioned in the introduction, the subject of free boundary minimal surfaces has attracted a lot of recent attention mostly due to the groundbreaking work of Fraser and Schoen \cite{Fraser-Schoen11} \cite{Fraser-Schoen16} on the extremal Steklov eigenvalue problem on compact surfaces with boundary. We now give a brief discussion on their important work.

Let $\Sigma$ be a smooth compact manifold with boundary. Given a Riemannian metric $g$ on $\Sigma$, one can define the \emph{Dirichlet-to-Neumann map} $L:C^\infty(\partial \Sigma) \to C^\infty(\partial \Sigma)$ by 
\[ L u:=\frac{\partial \hat{u}}{\partial \nu} \]
where $\nu$ is the outward unit normal of $\partial \Sigma$ with respect to $(\Sigma,g)$ and $\hat{u} \in C^\infty(\Sigma)$ is the harmonic extension of $u$, i.e. $\hat{u}$ is the unique solution to the Dirichlet boundary value problem
\[ \left\{ \begin{array}{rcll}
\Delta_g \hat{u} & = & 0 & \text{ on $\Sigma$}\\
\hat{u}|_{\partial \Sigma} & = & u.  & 
\end{array} \right. \]
It is well-known that $L$ is a non-negative, self-adjoint elliptic pseudodifferential operator of order one which has a discrete spectrum
\[ 0=\sigma_0 < \sigma_1 \leq \sigma_2 \leq \cdots \leq \sigma_k \leq \cdots \to +\infty.\]
These are called the \emph{Steklov eigenvalues} of $(\Sigma,g)$. The eigenfunctions $\{\phi_i\}_{i=0}^\infty \subset C^\infty(\partial \Sigma)$ form a complete orthonormal basis of $L^2(\partial \Sigma)$. For our convenience, we will sometimes think of $\phi_i$ as defined on the whole $\Sigma$ by their harmonic extension $\hat{\phi}_i$. The lowest eigenvalue $\sigma_0$ is always zero which corresponds to constant functions. Using Theorem \ref{T:definitions} (3), Fraser and Schoen made an important observation which links free boundary minimal submanifolds in $\B^n$ to the Steklov eigenvalues.

\begin{lemma}[Fraser-Schoen \cite{Fraser-Schoen11}]
An immersed submanifold $\varphi:\Sigma^k \to \B^n$ is a free boundary minimal submanifold if and only if the coordinate functions $x_i$, $i=1,\cdots,n$, restricted to $\Sigma$ are Steklov eigenfunctions with eigenvalue $1$. 
\end{lemma}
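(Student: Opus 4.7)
The plan is to recognize that this lemma is essentially a tautology once we unpack the definition of a Steklov eigenfunction and compare with characterization (3) of Theorem \ref{T:definitions}. There is no analytic content beyond what has already been established; the proof is a matter of rewriting definitions.

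First I would clarify the convention implicit in the statement: calling $x_i|_\Sigma$ a Steklov eigenfunction with eigenvalue $\sigma$ means two things simultaneously --- that $x_i|_\Sigma$ coincides with the harmonic extension of its own boundary values $x_i|_{\partial \Sigma}$ (i.e.\ $\Delta_\Sigma x_i = 0$ on $\Sigma$), and that this boundary datum satisfies $L(x_i|_{\partial \Sigma}) = \sigma \, x_i|_{\partial \Sigma}$, which by definition of the Dirichlet-to-Neumann map means $\partial x_i/\partial \nu = \sigma\, x_i$ on $\partial \Sigma$.

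With this parsing in hand, both directions are immediate. For the forward direction, if $\Sigma$ is a free boundary minimal submanifold, then Theorem \ref{T:definitions}(3) gives $\Delta_\Sigma x_i = 0$ and $\partial x_i/\partial \nu = x_i$ on $\partial \Sigma$ for each $i$; this is exactly the pair of conditions identifying $x_i$ as a Steklov eigenfunction with eigenvalue $1$. For the converse, if each $x_i|_\Sigma$ is a Steklov eigenfunction with eigenvalue $1$, then by the same two conditions $x_i$ is harmonic on $\Sigma$ and satisfies the Robin condition $\partial x_i/\partial \nu = x_i$, which is precisely characterization (3) of Theorem \ref{T:definitions}, so $\Sigma$ is a free boundary minimal submanifold.

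Since the statement is this direct, there is no real obstacle. If anything, the only point worth stressing in the write-up is the convention that treats $x_i$ as both a function on $\Sigma$ and, via restriction, a function on $\partial \Sigma$, so that the harmonicity of $x_i$ (which in the ambient characterization encodes minimality) and the Robin boundary condition (which encodes the free boundary condition) are both absorbed into the single phrase ``Steklov eigenfunction with eigenvalue $1$''.
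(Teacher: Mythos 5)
Your proposal is correct and matches the paper's own justification: the lemma is stated there as a direct consequence of Theorem \ref{T:definitions}(3), which is exactly the identification you make between harmonicity plus the Robin condition $\partial x_i/\partial\nu = x_i$ and being a Steklov eigenfunction with eigenvalue $1$. Nothing further is needed.
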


The first non-zero Steklov eigenvalue $\sigma_1$ can be characterized variationally as
\begin{equation}
\label{E:sigma1}
\sigma_1=\inf_{0 \neq u \in C^\infty(\partial \Sigma)} \left\{ \frac{\int_\Sigma |\nabla \hat{u}|^2 \; da}{\int_{\partial \Sigma} u^2 \; ds} \; : \; \int_{\partial \Sigma} u\; ds =0 \right\}. 
\end{equation}
More generally, we have for any $k \in \N$,
\[ \sigma_k=\inf_{0 \neq u \in C^\infty(\partial \Sigma)} \left\{ \frac{\int_\Sigma |\nabla \hat{u}|^2 \; da}{\int_{\partial \Sigma} u^2 \; ds} \; : \;  \int_{\partial \Sigma} u \phi_j \; ds =0 \text{ for $j=0,1,2,\cdots,k-1$}\right\}, \]
where $\phi_j \in C^\infty(\partial \Sigma)$ is the $j$-th Steklov eigenfunction corresponding to the $j$-th Steklov eigenvalue $\sigma_j$. 

We now focus on the case that $\Sigma$ is a surface. By the work of Fraser and Schoen \cite{Fraser-Schoen11} (for $k=1$) and Girouard and Polterovich \cite{Girouard-Polterovich12} (for $k \geq 2$), we have the following coarse upper bound for the $k$-th Steklov eigenvalue only in terms of the topology of the surface. Note that the left hand side in the inequality is a scale-invariant quantity. One can equivalently consider only smooth metrics $g$ normalized so that $|\partial \Sigma|_g=1$.

\begin{proposition}[Fraser-Schoen \cite{Fraser-Schoen11}, Girouard-Polterovich \cite{Girouard-Polterovich12}]
\label{P:coarse-bound}
Let $\Sigma$ be a smooth compact surface with genus $\gamma$ and $\ell$ boundary components. Then, for all $k \in \N$,
\[ \sup_g \sigma_k(g) |\partial \Sigma|_g \leq 2 \pi (\gamma+\ell)k \]
where the supremum is taken over all smooth Riemnanian metrics $g$ on $\Sigma$. Here, $|\partial \Sigma|_g$ denotes the total boundary length of $\partial \Sigma$ with respect to the metric $g$.
\end{proposition}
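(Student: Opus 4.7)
The plan is to combine a classical existence theorem for conformal branched covers onto the unit disk with a Hersch-type balancing trick applied to the variational characterization of the Steklov eigenvalues.

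\textbf{Step 1 (Conformal cover).} Fix a conformal structure on $\Sigma$ compatible with $g$. A classical theorem of Ahlfors (sharpened by Gabard) produces a proper holomorphic branched cover $F = (F_1,F_2):\Sigma \to \mathbb{D}$ of degree $d \leq \gamma + \ell$ that sends $\partial \Sigma$ to $\partial\mathbb{D}$. Conformality of $F$ together with the fact that $\mathbb{D}$ is covered with multiplicity $d$ gives
\[ \int_\Sigma |\nabla F|^2 \, da \;=\; 2\, \Area(\mathbb{D})\, d \;=\; 2\pi d \;\leq\; 2\pi(\gamma+\ell). \]

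\textbf{Step 2 (Case $k=1$, Fraser-Schoen).} Apply the Hersch trick: choose $a \in \mathbb{D}$ and the associated M\"obius automorphism $\Phi_a$ of $\mathbb{D}$ so that $G := \Phi_a \circ F$ satisfies $\int_{\partial\Sigma} G_i \, ds = 0$ for $i=1,2$. The existence of such an $a$ is a standard topological-degree argument, since as $a$ tends to $\partial\mathbb{D}$ the centre-of-mass map sweeps out $\mathbb{D}$. Taking the $G_i$ as admissible test functions in (\ref{E:sigma1}), summing over $i$, and using $|G| \equiv 1$ on $\partial\Sigma$ together with conformal invariance of the Dirichlet energy,
\[ \sigma_1 |\partial\Sigma|_g \;\leq\; \int_\Sigma |\nabla G|^2 \, da \;=\; 2\pi d \;\leq\; 2\pi (\gamma+\ell), \]
which gives the bound for $k=1$.

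\textbf{Step 3 (General $k$, Girouard-Polterovich).} To promote this to $\sigma_k$, I would replace the two balanced coordinate functions by $k$ cut-off functions with pairwise disjoint supports on $\partial\Sigma$, which are automatically $L^2(\partial\Sigma,ds)$-orthogonal. Push the boundary length forward to $\mu := F_*(ds|_{\partial\Sigma})$ on $\partial\mathbb{D}$, a measure of total mass $|\partial\Sigma|_g$. A concentration lemma for measures on $\mathbb{S}^1$ (in the spirit of Korevaar and of Grigor'yan-Netrusov-Yau) yields $k$ pairwise disjoint arcs $I_1,\dots,I_k \subset \partial\mathbb{D}$ together with cut-off functions $\chi_j \in C^\infty(\mathbb{D})$ supported in pairwise disjoint neighbourhoods of the $I_j$ such that $\int_{\partial\mathbb{D}} \chi_j^2 \, d\mu \geq |\partial\Sigma|_g/k$ while $\int_\mathbb{D} |\nabla \chi_j|^2 \, dA$ stays uniformly bounded. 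The pull-backs $v_j := \chi_j \circ F$ are then pairwise orthogonal in $L^2(\partial\Sigma,ds)$, and by conformal invariance combined with Step 1, $\int_\Sigma |\nabla v_j|^2 \, da \leq 2\pi(\gamma+\ell)$. The min-max characterisation
\[ \sigma_k(g) \;\leq\; \max_{1\leq j \leq k}\frac{\int_\Sigma |\nabla v_j|^2 \, da}{\int_{\partial\Sigma} v_j^2 \, ds} \]
then produces the desired bound $\sigma_k(g)\,|\partial\Sigma|_g \leq 2\pi(\gamma+\ell)\, k$.

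The principal obstacle lies in Step 3. The two-parameter M\"obius group makes Hersch's balancing essentially automatic when $k=1$, but orthogonalising $k$ test functions requires a genuine concentration/partition argument, and the Dirichlet energies of the cut-offs on $\mathbb{D}$ must be controlled uniformly (on the order of a dimensional constant, independent of $k$ and of the pushforward measure) so that each numerator in the Rayleigh quotient scales only with $\gamma+\ell$. This uniform capacity estimate on $\mathbb{D}$, encoded in the partition lemma for measures on $\mathbb{S}^1$, is the technical heart of the Girouard-Polterovich extension.
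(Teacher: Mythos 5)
Your Steps 1 and 2 are correct and essentially identical to the paper's own argument: the paper also proves the proposition only for $k=1$, via the Ahlfors--Gabard proper conformal branched cover of degree at most $\gamma+\ell$, Hersch balancing by a M\"obius automorphism, and the variational characterization (\ref{E:sigma1}); the case $k\geq 2$ is simply quoted from Girouard--Polterovich. So the part you prove in detail matches the paper.

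The gap is in Step 3, and it is genuine. First, the key claim that ``by conformal invariance combined with Step 1, $\int_\Sigma |\nabla v_j|^2\,da \leq 2\pi(\gamma+\ell)$'' is unjustified: for $v_j=\chi_j\circ F$ one has $\int_\Sigma |\nabla v_j|^2\,da = d\int_{\mathbb{D}}|\nabla \chi_j|^2\,dA$, and $\int_{\mathbb{D}}|\nabla\chi_j|^2\,dA$ is a capacity-type quantity for the cutoff which has nothing to do with the area $\pi$ of the disk; Step 1 bounds the energy of the covering map $F$ itself, not of compositions with cutoffs. Moreover, a Korevaar or Grigor'yan--Netrusov--Yau partition lemma only produces disjoint sets carrying a definite \emph{fraction} of the measure (not a full $1/k$ share each, since the transition regions of disjointly supported cutoffs must lose some mass) and cutoffs whose energy is bounded by a universal, non-explicit constant. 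So this route yields at best $\sigma_k|\partial\Sigma|_g \leq C(\gamma+\ell)k$ for some universal $C$, not the stated sharp constant $2\pi$. There is also an off-by-one error: $k$ disjointly supported functions span only a $k$-dimensional space of boundary traces and therefore bound $\sigma_{k-1}$; to bound $\sigma_k$ by the min-max principle you need $k+1$ such functions (disjoint supports give mutual orthogonality, but not orthogonality to constants or to the lower Steklov eigenfunctions). The actual Girouard--Polterovich argument obtains the sharp constant differently: it pushes the boundary measure forward by $F$ to the circle and runs a Hersch--Payne--Schiffer-type construction on the disk, where the test functions come from M\"obius-adjusted power maps $z\mapsto z^k$ (whose two components have total Dirichlet energy $2\pi k$ per sheet of the cover and satisfy $x^2+y^2=1$ on $\partial\mathbb{D}$), with a topological degree argument supplying the required orthogonality conditions; the constant $2\pi(\gamma+\ell)k$ is exactly the energy of that degree-$(\gamma+\ell)k$ map, not the outcome of a capacity estimate.
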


\begin{proof}
We recall the proof of Fraser and Schoen in the case $k=1$. Their idea is to use the variational characterization of $\sigma_1(\Sigma)$ together with Hersch's balancing trick to construct suitable test functions. By a result of Ahlfors and Gabard, there exists a proper conformal branched cover $\varphi:\Sigma \to D$ to the unit disk $D$ in $\R^2$. Moreover, the degree of $\varphi$ is at most $\gamma+\ell$. After possibly composing $\varphi$ with a suitable conformal diffeomorphism of $D$, we can assume that the covering map $\varphi=(\varphi_1,\varphi_2)$ is balanced on the boundary, i.e. $\int_{\partial \Sigma} \varphi_i=0$ for $i=1,2$. Using (\ref{E:sigma1}), we obtain
\[ \sigma_1 \int_{\partial \Sigma} \varphi_i^2 \; ds \leq \int_\Sigma |\nabla \hat{\varphi_i}|^2 \; da \leq \int_\Sigma |\nabla \varphi_i|^2\;da \]
where we have used the fact that harmonic functions minimize the Dirichlet energy integral with fixed boundary value. Summing $i=1,2$, note that $\varphi(\partial \Sigma) \subset \partial D$ and using conformality of $\varphi$, we have
\[ \sigma_1 |\partial \Sigma|_g \leq 2 \text{Area}(\varphi(\Sigma)) \leq 2\pi (\gamma+\ell).\]
\end{proof}

We remark that the upper bound in Proposition \ref{P:coarse-bound} can be improved to a linear bound of the form $A\gamma +Bk$ \cite[Corollary 4.1]{Hassannezhad11} or $2\pi(\gamma+\ell+k-2)$ \cite[Theorem 1.4]{Karpukhin17}. The precise form does not concern us here as long as it is solely depending on $k$ and the topology of $\Sigma$. Given the coarse upper bound in Proposition \ref{P:coarse-bound}, it is reasonable to study the following extremal eigenvalue problems:

\begin{question}[Fraser-Schoen \cite{Fraser-Schoen16}]
For each topological type of $\Sigma$, what is the sharp upper bound for $\sigma_k(g) |\partial \Sigma|_g$? Is it achieved by a smooth metric on $\Sigma$?
\end{question}

For $k=1$ and simply connected domains in $\R^2$, this problem was studied in 1954 by Weinstock \cite{Weinstock54} where he showed that the bound in Proposition \ref{P:coarse-bound} is in fact sharp and equality is only achieved by the round disk. More than fifty years later, Fraser and Schoen initiated the study again on the annulus (also for $k=1$) and found in \cite{Fraser-Schoen11} \cite{Fraser-Schoen16} that the sharp upper bound is achieved by the rotationally invariant metric on the critical catenoid $\mathbb{K}$ defined in Theorem \ref{T:catenoid-exists}. Remarkably, they were also able to solve the problem completely for any genus $0$ surfaces with boundary. The problem is highly non-trivial even for the annulus case as apriori there is no guarantee that a \emph{smooth} extremal metric should exist (in fact, sometimes an extremal metric ought to be singular \cite{Girouard-Polterovich10}). However, if we \emph{assume} that a smooth extremal metric exists, then there is a nice geometric characterization of the metric arising from free boundary minimal surfaces in $\B^n$.

\begin{theorem}[Fraser-Schoen \cite{Fraser-Schoen16}]
\label{T:regularity}
Let $\Sigma$ be a compact surface with boundary. If $g_0$ is a smooth metric on $\Sigma$ such that for some $k \in \N$,
\[ \sigma_k(g_0) |\partial \Sigma|_{g_0}=\max_g \sigma_k(g) |\partial \Sigma|_g\]
where the maximum is taken over all smooth metrics on $\Sigma$. Then there exist independent first Steklov eigenfunctions $u_1,\cdots,u_n$ which give a branched conformal immersion $u=(u_1,\cdots,u_n):\Sigma \to \B^n$ such that $u(\Sigma)$ is a free boundary minimal surface in $\B^n$, and up to rescaling of the metric $u$ is an isometry on $\partial \Sigma$.
\end{theorem}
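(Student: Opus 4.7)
The natural approach is variational: perturb the extremal metric $g_0$ in every smooth direction, use the extremality of $F(g) := \sigma_k(g)|\partial\Sigma|_g$ at $g_0$ to extract first-order conditions on the $\sigma_k$-eigenspace $E_k$, and finally assemble the resulting eigenfunctions into a branched conformal immersion into $\mathbb{B}^n$. Consider a smooth family $g_t = g_0 + t h$ with $h$ an arbitrary symmetric $(0,2)$-tensor on $\Sigma$, and differentiate the Rayleigh quotient in (\ref{E:sigma1}) along $g_t$. Since $\dim \Sigma = 2$, the Dirichlet energy $\int_\Sigma |\nabla \hat u|^2\, dA$ is conformally invariant, so the interior piece of the variation is governed only by the pairing of the traceless part of $h$ with the stress--energy tensor
\[
T_u = du \otimes du - \tfrac{1}{2}|\nabla \hat u|^2 g_0,
\]
while the boundary piece is governed by the pairing of $h|_{\partial\Sigma}$ with $u^2$. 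When $\sigma_k(g_0)$ has multiplicity $m \geq 1$, standard perturbation theory shows that $t \mapsto \sigma_k(g_t)$ is only one-sided differentiable at $t=0$, with one-sided derivatives given by the min/max over unit-norm $u \in E_k$ of an explicit quadratic form $Q_h(u)$ assembled from the above ingredients.

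Extremality of $g_0$ then forces
\[
\max_{u \in E_k,\; \|u\|=1} Q_h(u) \;\geq\; 0 \;\geq\; \min_{u \in E_k,\; \|u\|=1} Q_h(u)
\]
for every admissible $h$; equivalently, $0$ lies in the convex hull of $\{Q_h(u)\}$ for every $h$. A Hahn--Banach / minimax argument, in the spirit of Nadirashvili's treatment of the closed Laplacian case and adapted to the Steklov setting, then produces independent eigenfunctions $u_1, \ldots, u_N \in E_k$ (after absorbing positive weights) such that the linear functional $h \mapsto \sum_i Q_h(u_i)$ vanishes identically on the space of metric variations. Localising this identity, tests against compactly supported traceless $h$ in the interior yield the pointwise identity
\[
\sum_{i=1}^N \Bigl( du_i \otimes du_i - \tfrac{1}{2}|\nabla u_i|^2 g_0 \Bigr) = 0,
\]
while tests against $h$ supported near $\partial\Sigma$ yield $\sum_i u_i^2 = c$ on $\partial\Sigma$ for some constant $c > 0$.

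The two pointwise identities have clean geometric content. The first says that $u := (u_1, \ldots, u_N)$ is weakly conformal, hence a branched conformal immersion of $\Sigma$ into $\mathbb{R}^N$, and the second says that $u(\partial\Sigma) \subset \sqrt{c}\,\Sph^{N-1}$; after discarding linearly dependent components we may assume $N = n$. Each $u_i$ is harmonic on $\Sigma$ and satisfies $\partial u_i/\partial \nu = \sigma_k u_i$ on $\partial\Sigma$, so rescaling $g_0$ by an appropriate constant and normalising $u$ so that $u(\partial\Sigma) \subset \partial\mathbb{B}^n$ simultaneously makes $u$ an isometry along $\partial\Sigma$ and converts the Robin condition into $\partial u_i/\partial\nu = u_i$. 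By characterisation~(3) of Theorem~\ref{T:definitions}, $u(\Sigma)$ is then a free boundary minimal surface in $\mathbb{B}^n$. The main technical obstacle is the step in the previous paragraph: passing from the pointwise sign condition on $Q_h$ to a pointwise vanishing of a convex combination, and simultaneously ensuring linear independence of the resulting eigenfunctions so that $u$ is a genuine branched immersion rather than a degenerate map, requires a delicate minimax argument over an infinite-dimensional space of variations together with careful book-keeping when $\sigma_k$ has large multiplicity.
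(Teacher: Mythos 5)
This survey states Theorem \ref{T:regularity} without proof, citing \cite{Fraser-Schoen16}, and your outline reproduces essentially the original Fraser--Schoen variational argument: one-sided differentiability of $\sigma_k$ under metric perturbations, the convex-hull/separation (Hahn--Banach) step over the finite-dimensional eigenspace in the spirit of Nadirashvili and El Soufi--Ilias, and the resulting pointwise identities $\sum_i \bigl(du_i\otimes du_i-\tfrac12|\nabla u_i|^2\,g_0\bigr)=0$ in the interior and $\sum_i u_i^2=\mathrm{const}$ on $\partial\Sigma$. Your sketch is correct in its structure; the one step worth spelling out is the final claim that $u$ is an isometry on $\partial\Sigma$ after rescaling, which follows because conformality together with the Robin condition $\partial u_i/\partial\nu=\sigma_k u_i$ and $\sum_i u_i^2=\mathrm{const}$ forces the boundary speed $|\partial_T u|$ to be constant along $\partial\Sigma$.
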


Theorem \ref{T:regularity} reduces the extremal eigenvalue problem to studying the regularity of an extremal metric. The regularity is a very subtle issue. In \cite{Fraser-Schoen16}, Fraser and Schoen completely solved the problem for $k=1$ on all genus zero surfaces. Their arguments are very clever and sophisticated. Roughly speaking, their proof involves first controlling the conformal structure of metrics near the supremum, and then controlling the metrics themselves. A key ingredient in the first part is to establish, in the genus zero case, the strict monotonicity of the supremum of $\sigma_1 |\partial \Sigma|$ with respect to the number of boundary components. Moreover, they were able to give a precise control on the multiplicity of $\sigma_1$ and a detailed description of the geometry of the extremal metric in the genus $0$ case. As a corollary of their main theorem \cite[Theorem 1.1]{Fraser-Schoen16}, we have the following important existence result. (See also the recent work of Girouard and Lagace \cite{GL}.)

\begin{theorem}[Fraser-Schoen \cite{Fraser-Schoen16}]
\label{T:FS-genus-0}
For any $\ell \in \N$, there exists a smooth embedded free boundary minimal surface $\Sigma^{FS}_{\ell}$ in $\B^3$ of genus $0$ and $\ell$ boundary components. Each $\Sigma^{FS}_\ell$ is star-shaped in the sense that a ray from the origin hits $\Sigma^{FS}_\ell$ at most once. Moreover, after a suitable rotation of each $\Sigma^{FS}_\ell$, the sequence $\{\Sigma^{FS}_\ell\}_{\ell \in \N}$ converges in $C^3$-norm on compact subsets of the interior of $\B^3$ to the equatorial disk with multiplicity two.
\end{theorem}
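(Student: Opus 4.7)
The plan is to realize each $\Sigma^{FS}_\ell$ as the image of the branched conformal immersion produced by Theorem \ref{T:regularity} applied to a maximizer of
\[ g \longmapsto \sigma_1(g) \, |\partial \Sigma_\ell|_g \]
over smooth metrics on the fixed genus $0$ topological surface $\Sigma_\ell$ with $\ell$ boundary components. Write
\[ \sigma^*(\ell) := \sup_g \sigma_1(g) \, |\partial \Sigma_\ell|_g. \]
By Proposition \ref{P:coarse-bound} we have $\sigma^*(\ell) < \infty$, and an elementary capping-off comparison argument (gluing flat disks onto near-maximizers) gives the sharper \emph{a priori} bound $\sigma^*(\ell) < 4\pi$ valid for every $\ell$. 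The decisive analytic input is the strict monotonicity
\[ \sigma^*(\ell) < \sigma^*(\ell + 1), \qquad \ell \geq 1, \]
which I would prove by taking a near-maximizer on $\Sigma_\ell$, attaching a thin cylindrical neck to manufacture a comparison metric on $\Sigma_{\ell+1}$, and performing a Hersch-type balancing on the corresponding Steklov Rayleigh quotient to show that $\sigma_1 \, |\partial \Sigma|$ strictly increases by a definite quantitative amount.

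With these two ingredients in hand, the existence of a smooth extremal metric follows by a controlled compactness argument on any maximizing sequence $g_i$ (normalized so $|\partial \Sigma_\ell|_{g_i} = 1$). In genus $0$ the only degeneration of the conformal structure that can occur is the pinching of a boundary collar, which would limit to a configuration whose extremal value equals $\sigma^*(\ell')$ for some $\ell' < \ell$; strict monotonicity rules this out and yields a smooth maximizer $g_0$. Applying Theorem \ref{T:regularity} to $g_0$ produces first Steklov eigenfunctions $u_1, \ldots, u_n$ assembling into a branched conformal immersion $u \colon \Sigma_\ell \to \B^n$ whose image is a free boundary minimal surface. A multiplicity bound for $\sigma_1$ on genus $0$ surfaces, obtained via the same Ahlfors--Gabard branched-cover trick used in the proof of Proposition \ref{P:coarse-bound}, forces $n \leq 3$, so $\Sigma^{FS}_\ell \subset \B^3$. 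The star-shaped property then follows from applying the maximum principle to $|x|^2$ on $\Sigma^{FS}_\ell$, using $\Delta_\Sigma |x|^2 = 4$ from (\ref{E:Laplace-r^2}), combined with the genus $0$ topology.

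For the asymptotic statement, the monotone bounded sequence $\sigma^*(\ell)$ converges to some $L \leq 4\pi$, and an explicit test-function construction using $\ell$-fold branched covers of the disk forces $L = 4\pi$. Proposition \ref{P:length-area} then gives the uniform area bound $|\Sigma^{FS}_\ell| = \frac{1}{2}\sigma^*(\ell) < 2\pi$, so standard smooth compactness for free boundary minimal surfaces in $\B^3$ with bounded area and bounded genus yields a subsequential $C^3$ limit on compact subsets of the interior of $\B^3$. After rotating each $\Sigma^{FS}_\ell$ suitably, the star-shaped property together with the convergence of the boundary components to a single great circle forces the limit to be a rotationally symmetric free boundary minimal surface passing through the origin, which by the cohomogeneity one classification following Theorem \ref{T:catenoid-exists} must be the equatorial disk. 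The multiplicity is pinned to exactly $2$ by comparing $\lim_\ell |\Sigma^{FS}_\ell| = 2\pi$ with the area $\pi$ of the equatorial disk.

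The main obstacle is the strict monotonicity $\sigma^*(\ell) < \sigma^*(\ell+1)$: the neck-gluing and the quantitative balancing must be executed with enough precision to defeat the normalization in the variational problem, and it is exactly this step that prevents the extremal metric from degenerating, distinguishing the genus $0$ case from the higher genus one. The uniform upper bound $\sigma^*(\ell) < 4\pi$, though more elementary, is also essential for pinning the limit down as the equatorial disk with multiplicity exactly two rather than some other free boundary minimal configuration.
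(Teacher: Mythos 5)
Your overall architecture is the same as the Fraser--Schoen strategy that the paper sketches for this theorem: maximize $\sigma_1(g)\,|\partial\Sigma|_g$ at fixed genus $0$ and $\ell$ boundary components, prove strict monotonicity of $\sigma^*(\ell)$ in $\ell$ to rule out degeneration of the conformal structure along a maximizing sequence, invoke Theorem \ref{T:regularity}, and cap the codomain dimension at $3$ by a multiplicity bound for the first Steklov eigenvalue; your neck-gluing sketch for the monotonicity is at the level of detail of the paper's own description and is indeed the crux. However, two of your steps would fail as written. The most serious is the claimed a priori bound $\sigma^*(\ell)<4\pi$ ``by capping off with flat disks'': there is no comparison principle relating $\sigma_1 L$ of a surface to that of the surface with some boundary components capped (the Steklov problem lives precisely on the boundary you are removing), and the elementary genus-zero bounds available are Proposition \ref{P:coarse-bound}, which grows linearly in $\ell$, and Kokarev-type bounds of size $8\pi$. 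Worse, the bound cannot hold for all $\ell$: homogenization constructions in the spirit of Girouard--Lagac\'e \cite{GL} produce genus-zero surfaces with many boundary components whose normalized first Steklov eigenvalue approaches $8\pi$, so $\sigma^*(\ell)$ eventually exceeds $4\pi$. Consequently your entire route to the asymptotic statement (monotone limit equal to $4\pi$, uniform area bound $|\Sigma^{FS}_\ell|<2\pi$, multiplicity pinned to two by area accounting) collapses; in \cite{Fraser-Schoen16} the asymptotics are extracted from fine properties of the maximizing surfaces themselves (immersion by first eigenfunctions with $\sigma_1=1$, star-shapedness, boundary length control), not from a universal bound valid for all metrics, and this is exactly the delicate part of the theorem.

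The second gap concerns embeddedness and star-shapedness. Theorem \ref{T:regularity} only yields a branched conformal immersion by first eigenfunctions; showing that the map is unbranched, an embedding, and that the image is star-shaped requires a genuine argument in \cite{Fraser-Schoen16} based on multiplicity and nodal-domain properties of first Steklov eigenfunctions, and you never supply it. Your appeal to the maximum principle for $|x|^2$ via (\ref{E:Laplace-r^2}) only reproduces properness (the interior of the surface stays in the open ball) and says nothing about a ray from the origin meeting the surface at most once. Likewise, the final compactness step is not ``standard'': since the number of boundary components is unbounded, the Gauss--Bonnet total curvature bound degenerates as $\ell\to\infty$, so uniform interior estimates, the identification of the interior limit, and the multiplicity count all require the specific structure of the maximizers rather than a generic compactness theorem; as written, ``the boundary components converge to a single great circle'' and ``the limit is rotationally symmetric, hence the equatorial disk'' are assertions not supported by anything you have established.
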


Note that up to rotations, we have $\Sigma^{FS}_1=\mathbb{D}$ and $\Sigma^{FS}_2=\mathbb{K}$. A schematic picture of the Fraser-Schoen surfaces are shown in Figure \ref{Fig:FS}.

\begin{figure}[h]
    \centering
    \begin{subfigure}{.3\textwidth}
        \centering
        \includegraphics[width=1\textwidth]{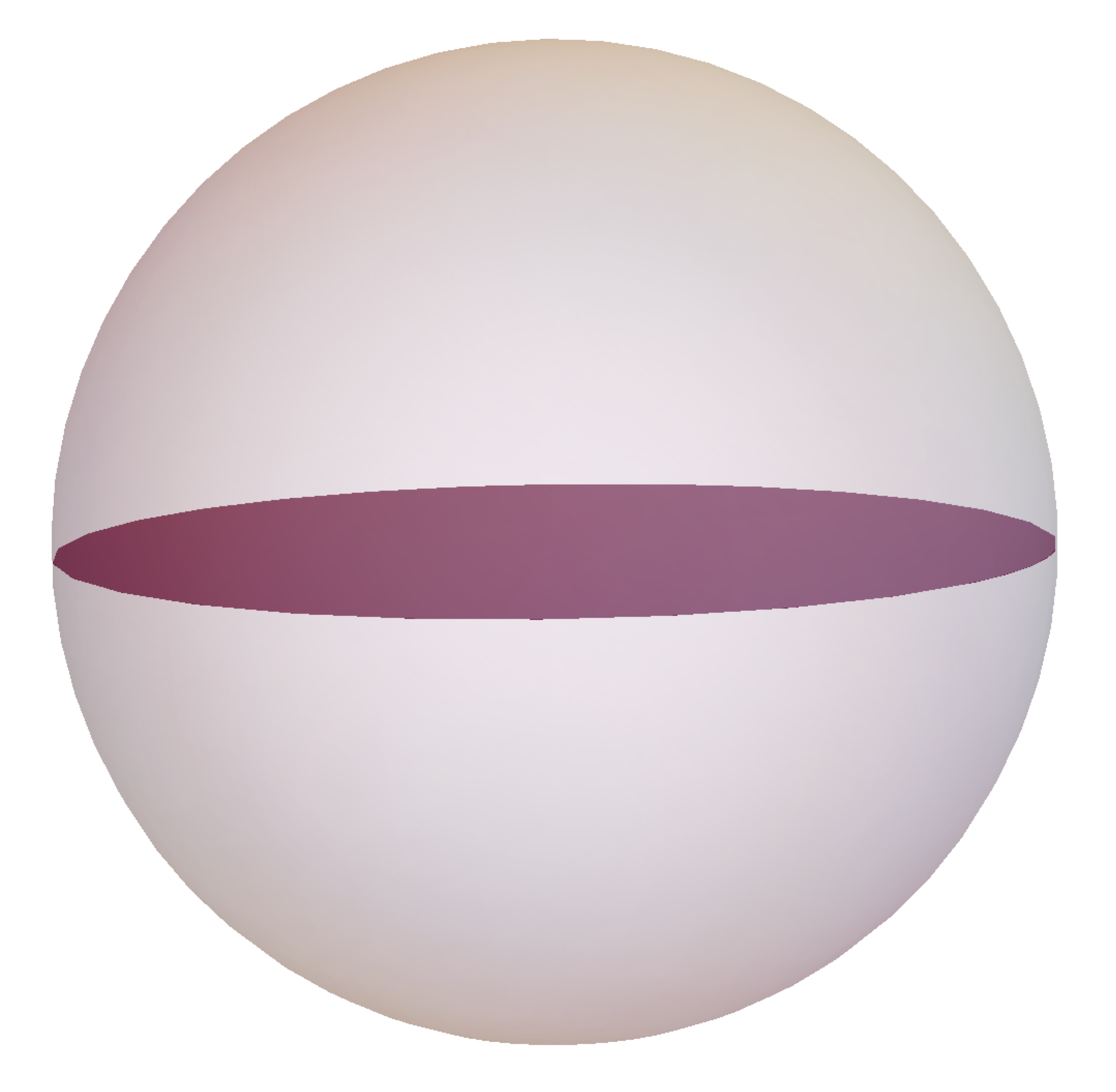}
    \end{subfigure}
    \begin{subfigure}{.3\textwidth}
        \centering
        \includegraphics[width=1\textwidth]{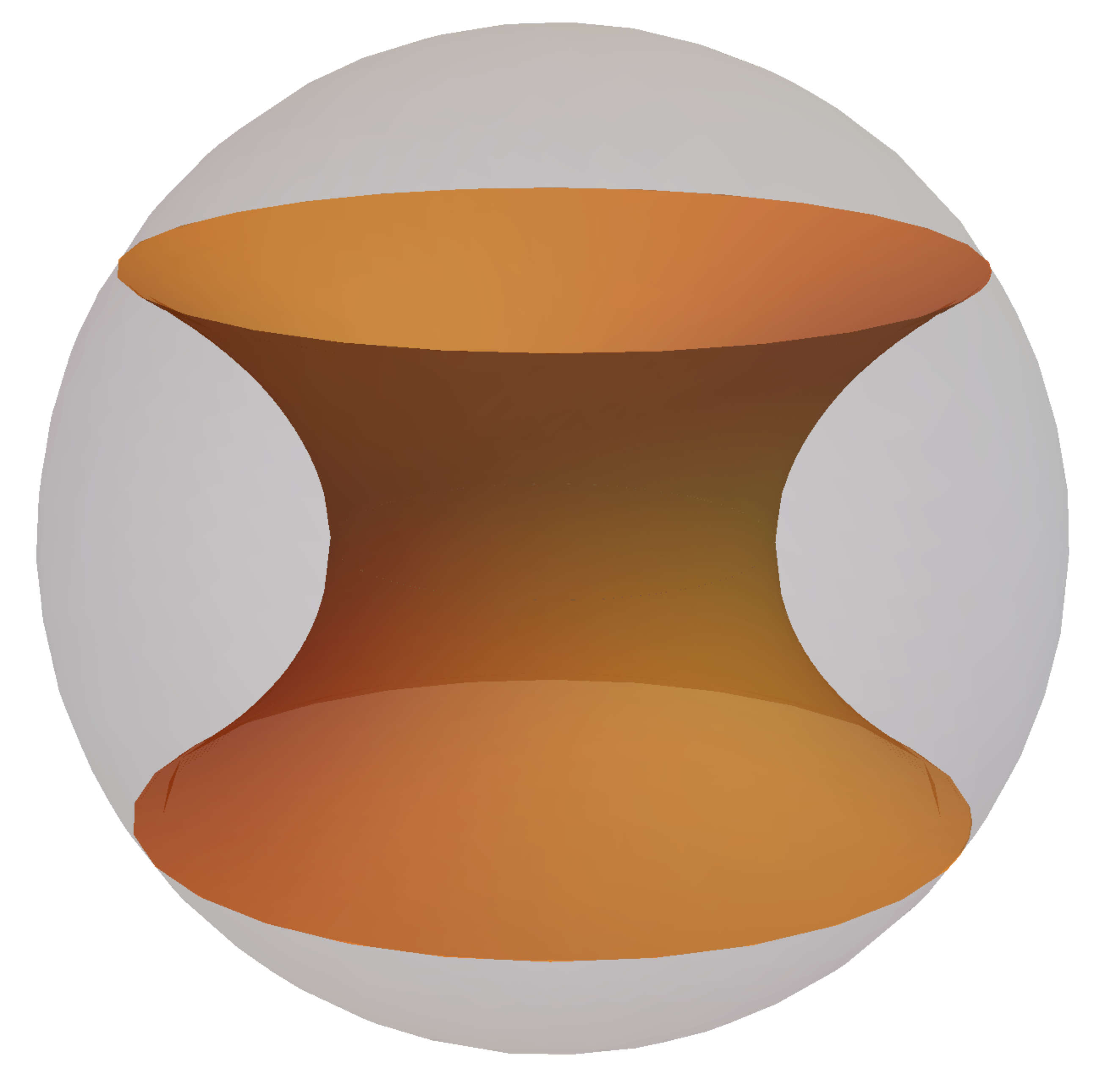}
    \end{subfigure}
       \begin{subfigure}{.3\textwidth}
        \centering
        \includegraphics[width=1\textwidth]{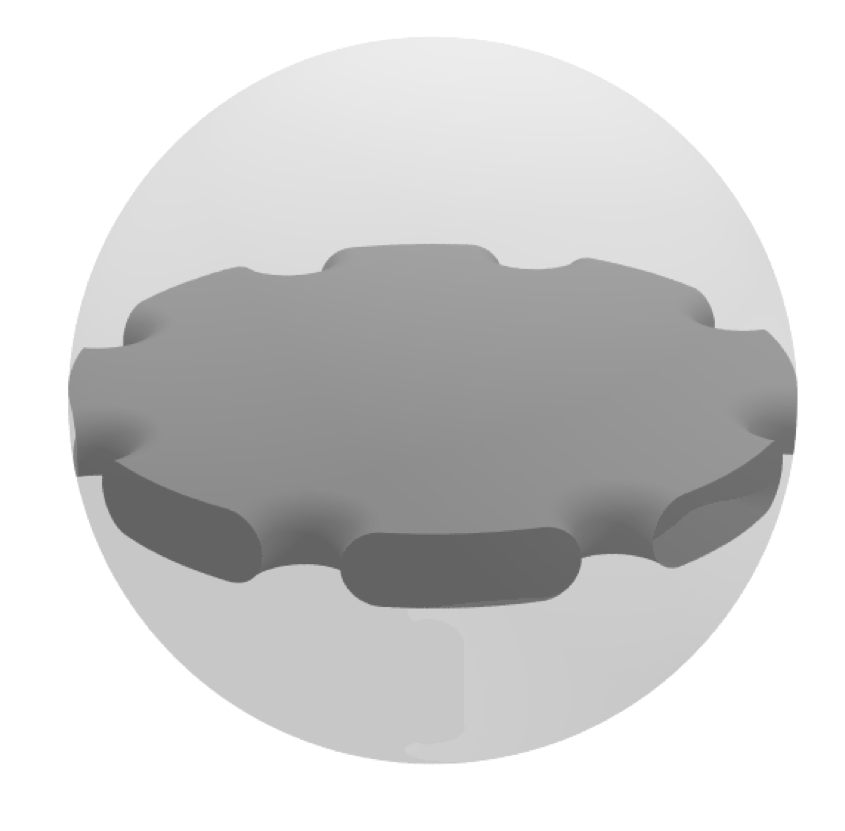}
    \end{subfigure}
    \caption{Fraser-Schoen surfaces}
    \label{Fig:FS}
\end{figure}

In the same paper \cite[Theorem 1.5]{Fraser-Schoen16}, they were also able to extend their arguments (for $k=1$) to the case of M\"{o}bius band. Note that the extremal metric is embedded as a free boundary minimal surface in $\B^4$ instead of $\B^3$.

\begin{theorem}[Fraser-Schoen \cite{Fraser-Schoen16}]
There exists a smooth embedded free boundary minimal surface in $\B^4$ called the \emph{critical M\"{o}bius band} given by a suitable rescaling of the embedding $\varphi:[-T_0,T_0] \times \mathbb{S}^1/\sim \to \R^4$ given by 
\[ \varphi (t,\theta)=(2 \sinh t  \cos \theta, 2 \sinh t \sin \theta, \cosh 2t \cos 2\theta, \cosh 2t \sin 2\theta) \]
where $T_0>0$ is the unique positive solution of $\coth t = 2 \tanh 2 t$. Here, we think of the M\"{o}bius band as $[-T_0,T_0] \times \mathbb{S}^1$ under the identification $(t,\theta) \sim (-t, \theta+\pi)$.
\end{theorem}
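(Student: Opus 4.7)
The plan is to verify directly, using the characterization in Theorem \ref{T:definitions}(3), that the given parameterization $\varphi$ defines an embedded free boundary minimal surface in $\B^4$. First I would check that $\varphi$ descends to the Möbius band, i.e.\ $\varphi(-t,\theta+\pi)=\varphi(t,\theta)$: the first two coordinates each pick up two cancelling sign flips, while the last two are invariant under the $2\pi$-shift of $2\theta$. A short direct computation then yields $\varphi_t\cdot\varphi_\theta\equiv 0$ and $|\varphi_t|^2=|\varphi_\theta|^2=4\cosh^2 t\,(4\cosh^2 t-3)=:\lambda^2$; since $\cosh t\geq 1$ this is bounded below by $4$, so $\varphi$ is a smooth conformal immersion.

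Conformality reduces the intrinsic Laplacian to $\Delta_\Sigma=\lambda^{-2}(\partial_t^2+\partial_\theta^2)$, and each coordinate function is of the separable form $f(t)g(\theta)$ where $f''=k^2 f$ and $g''=-k^2 g$ (with $k=1$ for the first two coordinates and $k=2$ for the last two). Hence each $x_i$ is harmonic on $\Sigma$, which by Theorem \ref{T:definitions}(3) shows that $\Sigma$ is a minimal surface in $\R^4$.

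To place $\Sigma$ inside $\B^4$ and establish the free boundary condition, I would observe that $|\varphi|^2=4\sinh^2 t+\cosh^2 2t$ depends only on $t$, so for any $T>0$ each boundary circle $t=\pm T$ maps into a common Euclidean sphere of radius $R(T)=(4\sinh^2 T+\cosh^2 2T)^{1/2}$, which becomes $\partial\B^4$ after rescaling by $R(T)^{-1}$. Orthogonality at the boundary is equivalent to the position vector $\varphi$ lying in the tangent plane of $\Sigma$ at $t=\pm T$; since $\varphi\cdot\varphi_\theta\equiv 0$ (the coordinate pairs $\{1,2\}$ and $\{3,4\}$ cancel separately), this reduces to $\varphi\parallel\varphi_t$. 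Matching the proportionality constant from the first two versus the last two coordinates forces $\tanh T=\tfrac{1}{2}\coth 2T$, equivalently $\coth T=2\tanh 2T$. Existence and uniqueness of the positive root $T_0$ then follow from the intermediate value theorem together with monotonicity: $\coth t$ strictly decreases from $+\infty$ to $1$ on $(0,\infty)$, while $2\tanh 2t$ strictly increases from $0$ to $2$.

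Finally, to promote the immersion to an embedding, I would verify injectivity on the Möbius band. If $\varphi(t_1,\theta_1)=\varphi(t_2,\theta_2)$ then the first two coordinates give $(2\sinh t_1)e^{i\theta_1}=(2\sinh t_2)e^{i\theta_2}$ in complex notation, which forces either $(t_1,\theta_1)=(t_2,\theta_2)$ when $t_1,t_2$ share a sign, or $t_2=-t_1$ and $\theta_2\equiv\theta_1+\pi\pmod{2\pi}$ when the signs differ; the degenerate case $t_1=t_2=0$ yields $\theta_1\equiv\theta_2\pmod\pi$. In every case the two preimages coincide modulo the Möbius relation, and compactness upgrades this to an embedding. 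The main obstacle I anticipate is really only bookkeeping: the parameterization has been hand-crafted to make conformality and harmonicity manifest, but correctly managing the rescaling, converting geometric orthogonality into the transcendental equation, and handling the Möbius identification in the injectivity step all require care.
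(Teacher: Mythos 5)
Your verification is correct, but it follows a genuinely different route from the source the survey cites. The survey itself gives no proof of this statement: it quotes the result from Fraser--Schoen, where the critical M\"{o}bius band arises indirectly, as the free boundary minimal surface realizing the maximizing metric for the normalized first Steklov eigenvalue on the M\"{o}bius band (via Theorem \ref{T:regularity} and a delicate regularity/existence analysis for the extremal metric, which also pins down that the immersion is by first eigenfunctions and lands in $\B^4$). Your approach instead takes the explicit parameterization as given and checks everything by hand: the M\"{o}bius identification, conformality with factor $4\cosh^2 t\,(4\cosh^2 t-3)$, harmonicity of the coordinates (hence minimality), the reduction of the free boundary condition to $\varphi\parallel\varphi_t$ at $t=\pm T$ using $\varphi\cdot\varphi_\theta\equiv 0$, the resulting equation $\coth T=2\tanh 2T$ with its unique positive root, and injectivity modulo the identification; all of these computations are accurate. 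This elementary verification fully proves the literal statement and is arguably the cleaner argument for it, whereas the Fraser--Schoen route buys much more (the extremal eigenvalue characterization and the attendant uniqueness-type information), which is the reason the surface matters in the survey's context. One small point you should make explicit: besides putting the boundary circles on the sphere of radius $R(T)$, you need the rest of the surface to lie inside the ball; this is immediate since $|\varphi|^2=4\sinh^2 t+\cosh^2 2t$ is even and strictly increasing in $|t|$ (or by the maximum principle applied to the subharmonic function $|x|^2$ on the minimal surface), so the rescaled surface is properly contained in $\B^4$ with boundary exactly on $\partial\B^4$.
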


We remark that there are recent progress towards the remaining cases of positive genus for \textbf{Open Question 2} in the work of \cite{Pe, Kar, HPe}. Their results provide new examples of positive genus free boundary minimal surfaces in $\mathbb{B}^N$ for some $N \geq 3$. It is, however, unknown whether $N=3$ nor the surfaces are embedded.


Not much is known concerning the extremal eigenvalue problem for higher Steklov eigenvalues $\sigma_k$ when $k \geq 2$. However, we would like to mention an interesting result of Fan, Tam and Yu \cite{Fan-Tam-Yu15} who studied the extremal eigenvalue problem for $\sigma_k$, $k \geq 2$, on the space of smooth rotationally invariant metrics on annuli. They found that except for $k=2$, the supremum (among rotationally invariant metrics) of $\sigma_k |\partial \Sigma|$ is achieved by a smooth extremal metric. For $k=2m-1$, $m \in \N$, the extremal metric is achieved by the $m$-fold cover (thus immersed when $m>1$) of the critical catenoid $\mathbb{K}$ in $\B^3$. This does not produce any new examples of free boundary minimal surfaces in $\B^3$. However, when $k$ is even, this produces some new immersed examples in $\B^4$ other than the critical M\"{o}bius band.

\begin{theorem}[Fan-Tam-Tu \cite{Fan-Tam-Yu15}]
For each $m \in \N$, $m \geq 2$, there exists a smooth immersed free boundary minimal surface in $\B^4$ called the \emph{critical $m$-M\"{o}bius band} given by a suitable rescaling of the immersion $\varphi_m:[-T_m,T_m] \times \mathbb{S}^1/\sim \to \R^4$ defined by 
\[ \varphi_m (t,\theta)=(m \sinh t  \cos \theta, m \sinh t \sin \theta, \cosh mt \cos m\theta, \cosh mt \sin m\theta) \]
where $T_m>0$ is the unique positive solution of $\coth t = m \tanh m t$. Here, we think of the M\"{o}bius band as $[-T_m,T_m] \times \mathbb{S}^1$ under the identification $(t,\theta) \sim (-t, \theta+\pi)$.
\end{theorem}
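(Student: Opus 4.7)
The plan is to verify directly, via Theorem~\ref{T:definitions}(3), that after a suitable rescaling $\varphi_m$ defines a free boundary minimal immersion into $\B^4$. Concretely, I would check: (i) $\varphi_m$ is a conformal immersion whose coordinate functions are harmonic in the induced metric, (ii) $|\varphi_m|^2$ is constant on $\{t = \pm T_m\}$ so that after rescaling the boundary lies on $\partial\B^4 = \Sph^3$, and (iii) the Robin condition $\partial x_i/\partial \nu = x_i$ holds along the rescaled boundary.

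For (i), a direct computation would yield
\[ |\partial_t\varphi_m|^2 = m^2(\cosh^2 t + \sinh^2 mt), \qquad |\partial_\theta\varphi_m|^2 = m^2(\sinh^2 t + \cosh^2 mt), \qquad \langle \partial_t\varphi_m,\partial_\theta\varphi_m\rangle = 0, \]
and the two squared norms coincide by $\cosh^2 - \sinh^2 = 1$. Hence $\varphi_m$ is a conformal immersion, and because in dimension two conformality reduces minimality to coordinate-wise harmonicity in the flat metric $dt^2 + d\theta^2$, the separable form $F(t)G(\theta)$ of each coordinate---with $F'' = \mu^2 F$ and $G'' = -\mu^2 G$ for $\mu \in \{1,m\}$---gives harmonicity at once. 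For (ii), one computes $|\varphi_m|^2 = m^2\sinh^2 t + \cosh^2 mt$, which depends only on $t$, so at $t = \pm T_m$ it equals a constant $R^2$; rescale by $1/R$. For (iii), the outward conormal at $t = T_m$ is parallel to $\partial_t\varphi_m$, so the Robin condition reduces to $\partial_t\varphi_m \parallel \varphi_m$ there; comparing the first and third coordinates yields the ratios $\coth T_m$ and $m\tanh mT_m$, so the condition is precisely the defining equation $\coth T_m = m\tanh mT_m$. The check at $t = -T_m$ is identical because both sides of that equation are odd in $t$.

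Existence and uniqueness of $T_m > 0$ I would extract from the intermediate value theorem and monotonicity applied to $f(t) = \coth t - m\tanh mt$: one has $f(0^+) = +\infty$, $f(+\infty) = 1 - m < 0$ for $m \geq 2$, and $f$ is strictly decreasing on $(0,\infty)$. The main technical point I expect to require the most care is verifying that $\varphi_m$ descends to the Möbius band quotient $(t,\theta) \sim (-t,\theta+\pi)$: the first two components are invariant because $\sinh$ is odd while $\cos(\theta+\pi) = -\cos\theta$ and $\sin(\theta+\pi) = -\sin\theta$, and the last two are handled by tracking the parity of $\cos(m\theta + m\pi)$ and $\sin(m\theta + m\pi)$. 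Beyond these verifications, no new analytic tools are needed---the entire content of the theorem is encoded in the separation-of-variables structure of $\varphi_m$ together with the defining equation for $T_m$.
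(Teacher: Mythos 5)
Your verification strategy is the natural one (the survey itself offers no proof of this statement, citing Fan--Tam--Yu), and most of the checks you outline are correct: the metric computations do give conformality with nowhere-vanishing factor $m^2(\cosh^2 t+\sinh^2 mt)$, each coordinate is flat-harmonic by separation of variables, $|\varphi_m|^2=m^2\sinh^2 t+\cosh^2 mt$ is a function of $t$ alone and is increasing in $|t|$, the conormal at $t=\pm T_m$ is along $\partial_t\varphi_m$ and the proportionality $\partial_t\varphi_m\parallel\varphi_m$ is exactly $\coth T_m=m\tanh mT_m$, and the monotonicity argument gives a unique $T_m>0$ for $m\geq 2$.

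The genuine gap is precisely the step you flagged and then waved through: the descent to the quotient. Under $(t,\theta)\mapsto(-t,\theta+\pi)$ the last two coordinates transform as $\cosh(-mt)\cos\bigl(m(\theta+\pi)\bigr)=(-1)^m\cosh(mt)\cos(m\theta)$ and similarly for the sine component, so $\varphi_m$ is invariant under the identification if and only if $m$ is even; setting $t=0$ shows this is unavoidable, since one would need $\cos(m\theta+m\pi)=\cos(m\theta)$ for all $\theta$. Thus ``tracking the parity'' exposes an obstruction rather than disposing of it: for odd $m\geq 3$ the map does not factor through the M\"{o}bius band at all, and your argument only produces an immersed free boundary minimal \emph{annulus} in $\B^4$. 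Your proof as written therefore establishes the statement only for even $m$ (with $m=2$ recovering the Fraser--Schoen critical M\"{o}bius band); to handle the remaining cases one must either restrict the range of $m$ or change the immersion so that the $\sinh$-factor carries an odd angular frequency and the $\cosh$-factor an even one (as in the $\mathbb{S}^1$-invariant classification of Fraser--Sargent), and you should flag this discrepancy with the statement as recorded rather than assert the quotient step in general.
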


Note that \cite{Fan-Tam-Yu15} only proved the examples above achieve the supremum among all \emph{rotationally invariant} metrics on the annulus. It is interesting to see whether they also maximize among \emph{all} smooth metrics on the annulus without the rotational symmetry assumption. It is also conjectured that the supremum of $\sigma_2 |\partial \Sigma|$ is not achieved by any smooth metric (c.f. \cite{Girouard-Polterovich10}). Note that Fraser and Sargent \cite{FSa} recently obtained some new existence and classification results for $\mathbb{S}^1$-invariant free boundary minimal annuli and M\"{o}bius bands.

\subsection{Gluing techniques}

Another powerful tool in constructing examples of natural variationally defined geometric objects is the \emph{singular perturbation method}. The idea originated from the work of Schoen \cite{Schoen88} on constant scalar curvature metrics and was later pioneered by a series of important work of Kapouleas \cite{Kapouleas90} \cite{Kapouleas91} \cite{Kapouleas95} \cite{Kapouleas97} on the construction of complete minimal and constant mean curvature surfaces. In the past decade, we have witnessed tremendous success of gluing techniques in a wide range of geometric variational problems, including for example closed minimal surfaces in $\Sph^n$ \cite{Kapouleas-Yang10} \cite{Kapouleas17} \cite{Kapouleas-McGrath17}, complete constant mean curvature surfaces in $\R^3$ \cite{Breiner-Kapouleas14}, Special Lagrangian cones \cite{Haskins-Kapouleas07} \cite{Haskins-Kapouleas12} \cite{Haskins-Kapouleas13} and self shrinkers of mean curvature flow \cite{Kapouleas-Kleene-Moller18}.

In the past few years, many of these powerful techniques have been applied to construct many new examples of embedded free boundary minimal surfaces in $\B^3$. The constructions can be divided into two different types: \emph{desingularization} and \emph{doubling} (or \emph{tripling}) constructions. For a desingularization construction, one typically takes a number of known free boundary minimal surfaces which intersect each other transversely along some curves. Then one hopes to desingularize the curves of intersection to produce another free boundary minimal surface. For a doubling (or tripling) construction, one takes two (or three) nearby copies of a given free boundary minimal surface. Then we try to produce another free boundary minimal surface by connecting the nearby copies using small catenoidal necks (in the interior) or half-catenoidal bridges (near the boundary). In both of these constructions, a family of initial surfaces have to be built carefully out of known examples so that they are approximate solutions to the free boundary minimal surface equation. One then applies an implicit function theorem argument to show that \emph{one} of these initial surfaces can be perturbed to an exact solution. We refer the readers to the excellent surveys \cite{Kapouleas05} \cite{Kapouleas11} by Kapouleas on the basic methodology of these ideas.

The first desingularization construction of free boundary minimal surfaces in $\B^3$ was done by Kapouleas and the author in \cite{Kapouleas-Li17}. This is a highly symmetric construction with $O(2)$-symmetry. The idea is to take an equatorial disk $\mathbb{D}$ and a critical catenoid $\mathbb{K}$ which are rotationally invariant about the same axis. Their union $\mathbb{D} \cup \mathbb{K}$ is a free boundary minimal surface in $\B^3$ which is singular along the circle of intersection $\mathcal{C}:=\mathbb{D} \cap \mathbb{K}$. We proved that this configuration can be desingularized to produce a free boundary analogue of the Costa-Hoffman-Meeks surfaces which are complete embedded minimal surfaces in $\R^3$ with finite total curvature. 

\begin{figure}[h]
    \centering
    \begin{subfigure}{.43\textwidth}
        \centering
        \includegraphics[width=1\textwidth]{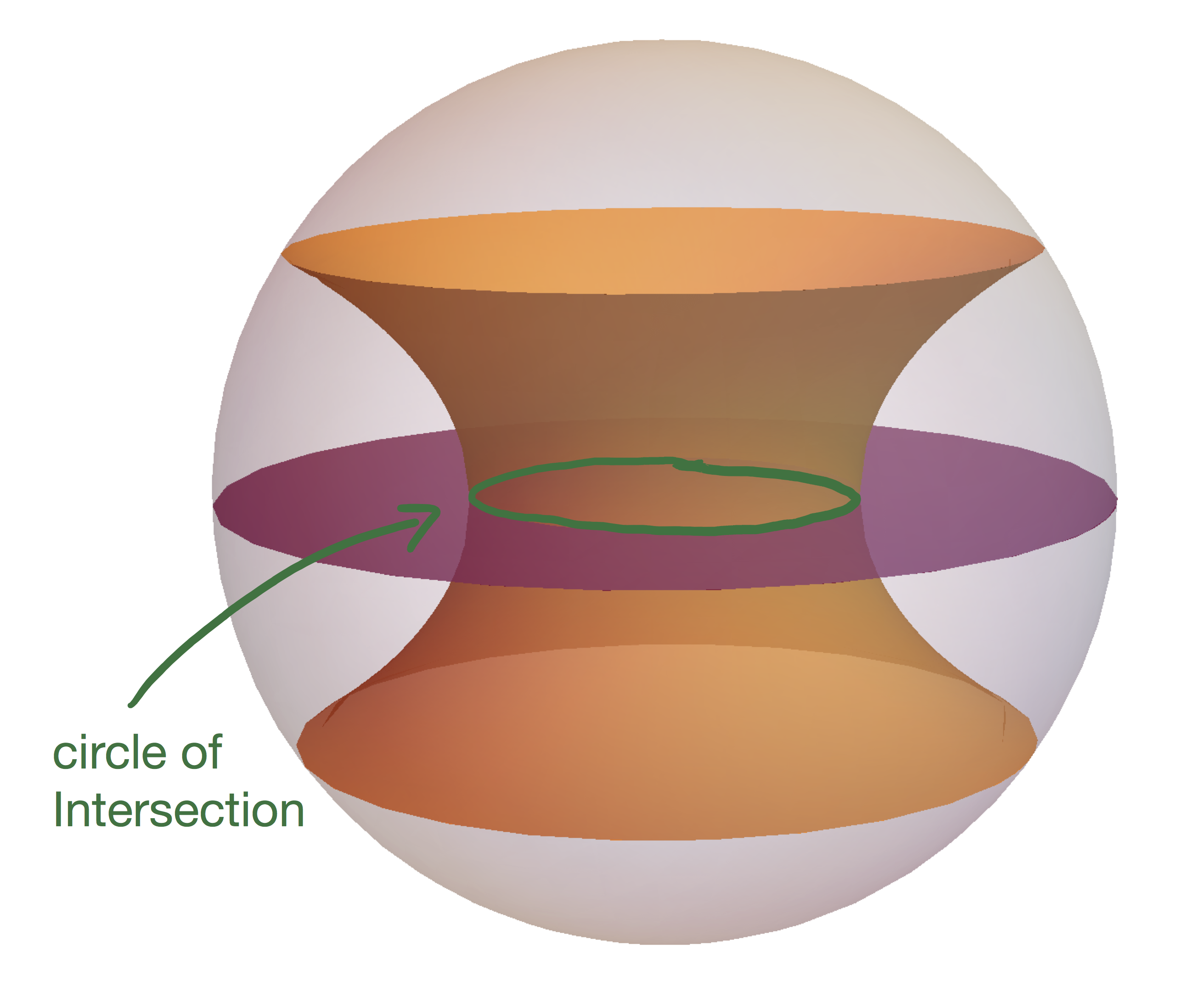}
    \end{subfigure}
    \begin{subfigure}{.5\textwidth}
        \centering
        \includegraphics[width=1\textwidth]{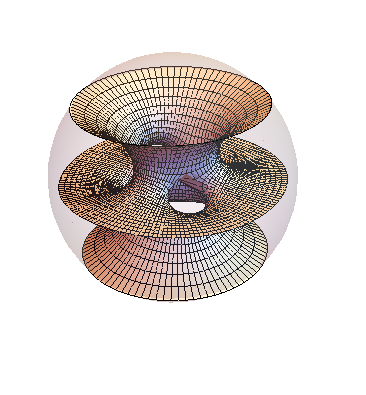}
    \end{subfigure}
    \caption{Desingularizing the union of a pair of coaxial equatorial disk and critical catenoid to obtain a free boundary analogue of Costa-Hoffman-Meeks surfaces}
    \label{Fig:KL}
\end{figure}

To construct our initial surfaces, one needs to take away a tubular neighhood of the circle of singularity $\mathcal{C}$ and glue in a smooth \emph{desingularizing surface} which is approximately minimal (in suitable scale). The desingularizing surface is constructed out of the standard \emph{Scherk's surface} $\mathcal{S}$ which is a complete singly-periodic embedded minimal surface in $\R^3$ described by the equation
\[ \sinh x_1 \sinh x_2 = \cos x_3.\]
The Scherk's surface $\mathcal{S}$ provides a model to desingularize the line of intersection between two mutually orthogonal planes in $\R^3$. Since the singularity $\mathcal{C}$ at hand is a circle instead of a line, an extra bending (which creates the majority of non-zero mean curvature other than the transition regions within where the pieces are glued) has to be introduced to fit $\mathcal{S}$ into a neighborhood of $\mathcal{C}$. Owing to the $O(2)$-symmetry, the scale can be chosen to be uniform along $\mathcal{C}$ and is measured by $m^{-1}$ where $m$ is the number of handles of the bent Scherk surface. By carefully analyzing the linearization of the free boundary minimal surface equation, taking the symmetries into account, we found that there is a one-dimensional kernel of the linearized equation on the building blocks. By the geometric principle, one can account for this one-dimensional kernel by considering a one-parameter family of initial surfaces $M_{\theta,m}$, where $\theta$ measures the amount of rotation on the ``wings'' of the Scherk surface to create \emph{unbalancing} to cancel the one dimensional kernel. Combining all the uniform linear and non-linear estimates, one can show using Schauder's fixed point theorem that for all $m$ sufficiently large, one of the initial surfaces $M_{\theta,m}$ can be perturbed to an exact solution of the non-linear free boundary minimal surface equation. As a result, we have the following:

\begin{theorem}[Kapouleas-Li \cite{Kapouleas-Li17}]
\label{T:KL}
For any $g \in \N$ sufficiently large, there exists an embedded, orientable, smooth free boundary minimal surface $\Sigma_g^{KL}$ in $\B^3$ which has genus $g$ and three connected boundary components. The surface $\Sigma_g^{KL}$ is symmetric under a dihedral group of order $4g+4$. Moreover, as $g \to \infty$, the sequence $\{\Sigma_g^{KL}\}$ converges in the Hausdorff sense (and smooth away from $\mathcal{C}=\mathbb{D} \cap \mathbb{K}$) to $\mathbb{D} \cup \mathbb{K}$.
\end{theorem}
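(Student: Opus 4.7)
The plan is to construct $\Sigma_g^{KL}$ by a singular perturbation (gluing) argument, desingularizing the transverse circle of intersection $\mathcal{C} = \mathbb{D} \cap \mathbb{K}$ where the equatorial disk meets the critical catenoid. First I would excise a small tubular neighborhood of $\mathcal{C}$ from the singular configuration $\mathbb{D} \cup \mathbb{K}$ and replace it by a desingularizing piece modeled on the Scherk surface $\mathcal{S}$, which is the standard local model for desingularizing two transversely intersecting planes. Since $\mathcal{C}$ is a circle rather than a straight line, the flat Scherk model must be \emph{bent} to wrap around $\mathcal{C}$; the bending scale is measured by $m^{-1}$ where $m \in \N$ is the number of Scherk handles distributed uniformly along $\mathcal{C}$. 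The uniform distribution is enforced by imposing a dihedral symmetry generated by the discrete rotation $\Z_m$ (inherited from the ambient $O(2)$-action on $\mathbb{D} \cup \mathbb{K}$) together with the reflections preserved by both $\mathbb{D}$ and $\mathbb{K}$, producing a group of order $4m+4$. The handle count identifies the genus as $g = m$, and the three boundary components arise as the one boundary circle of $\mathbb{D}$ together with the two of $\mathbb{K}$, which remain disjoint throughout the construction.

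Next I would assemble a one-parameter family of approximate (initial) surfaces $M_{\theta,m}$, where $\theta \in \R$ is a small parameter measuring an extra rotation applied to the asymptotic wings of the bent Scherk piece before they are glued to the disk and catenoid pieces. By construction, each $M_{\theta,m}$ is smooth, embedded, invariant under the prescribed dihedral group, and already meets $\partial \B^3$ orthogonally away from the gluing regions. Inside the transition annuli and across the bent Scherk piece the mean curvature is non-zero, but I would expect its size in appropriate weighted Hölder norms to decay polynomially as $m \to \infty$. The role of $\theta$ is geometric: it produces a controlled \emph{unbalancing} of the forces exchanged between the Scherk piece and the disk/catenoid sides, which will later be used to cancel a one-dimensional obstruction.

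The analytic core of the proof is the study of the linearized free boundary minimal surface operator on $M_{\theta,m}$, which is the Jacobi operator together with the Robin-type boundary condition coming from the free boundary condition along $\partial \Sigma$ (compare Theorem \ref{T:definitions}(3)). Restricted to the class of functions invariant under the dihedral group, this operator should be Fredholm with a one-dimensional approximate kernel, generated by a bounded Jacobi field essentially supported on the bent Scherk piece and not extending to the disk/catenoid model. The main obstacle I expect is establishing \emph{uniform} (in $m$) invertibility modulo this one-dimensional subspace, in weighted Hölder spaces adapted simultaneously to the disk, catenoid, bent Scherk, and transition regions, and in particular verifying that the Robin boundary condition is compatible with the symmetric ansatz on each local model. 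One must then show that $\theta$ pairs non-trivially with the associated cokernel, so that a small adjustment of $\theta$ can absorb the obstruction produced by the initial mean curvature.

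With a uniformly bounded right inverse in hand, I would convert the free boundary minimal surface equation on a normal graph $u$ over $M_{\theta,m}$ into a fixed-point equation of the schematic form $u = \mathcal{R}(H_0 + Q(u))$, where $\mathcal{R}$ is the right inverse, $H_0$ is the mean curvature of the initial surface, and $Q$ collects the quadratic and higher nonlinear terms. Standard quadratic estimates combined with the uniform linear estimates allow a Schauder fixed point argument to produce, for every sufficiently large $m$, a choice $\theta = \theta(m)$ and a small symmetric solution $u = u(m)$. Elliptic regularity and the smallness of $u$ in $C^1$ give that $\Sigma_m^{KL} := \mathrm{graph}(u)$ is smooth and embedded and inherits the full dihedral symmetry of order $4m+4$; orientability is automatic by the last proposition of Section \ref{S:Prelim}. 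Finally, the Hausdorff convergence $\Sigma_g^{KL} \to \mathbb{D} \cup \mathbb{K}$ and smooth convergence away from $\mathcal{C}$ follow from the fact that both $H_0$ and $u$ decay to zero outside a shrinking neighborhood of $\mathcal{C}$ as $m \to \infty$.
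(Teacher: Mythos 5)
Your proposal follows essentially the same route as the construction described in the paper: desingularizing $\mathcal{C}=\mathbb{D}\cap\mathbb{K}$ by a bent Scherk model with $m$ handles, using the one-parameter family $M_{\theta,m}$ of unbalanced initial surfaces to cancel the one-dimensional approximate kernel, and closing the argument with uniform weighted linear estimates and a Schauder fixed-point argument, with the Hausdorff and smooth-away-from-$\mathcal{C}$ convergence coming from the decay of the correction as $m\to\infty$. This matches the Kapouleas--Li scheme as outlined in the survey, including the symmetry group of order $4g+4$ and the identification of the three boundary circles.
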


Note that the surfaces $\Sigma_g^{KL}$ have the same symmetry as the Costa-Hoffman-Meeks surfaces which are complete properly embedded minimal surfaces in $\R^3$. Finally we mention that in an article under preparation, we construct free boundary minimal surfaces of arbitrary high genus with connected boundary, and also ones with two boundary components, by desingularizing two disks intersecting orthogonally along a diameter of the unit three-ball. The intersecting disks configuration is clearly not rotationally invariant, and the symmetry group is small and independent of the genus. These features make such desingularization construction much harder, but we can overcome the difficulties by following the general approach in \cite{Kapouleas11} with appropriate modifications to handle the free boundary condition. This construction can also be extended to the case of more than two disks symmetrically arranged around a common diameter by using higher order Karcher-Scherk towers as models. See Figure \ref{Fig:KL2} for examples of such setup.

\begin{figure}[h]
    \centering
    \begin{subfigure}{.43\textwidth}
        \centering
        \includegraphics[width=1\textwidth]{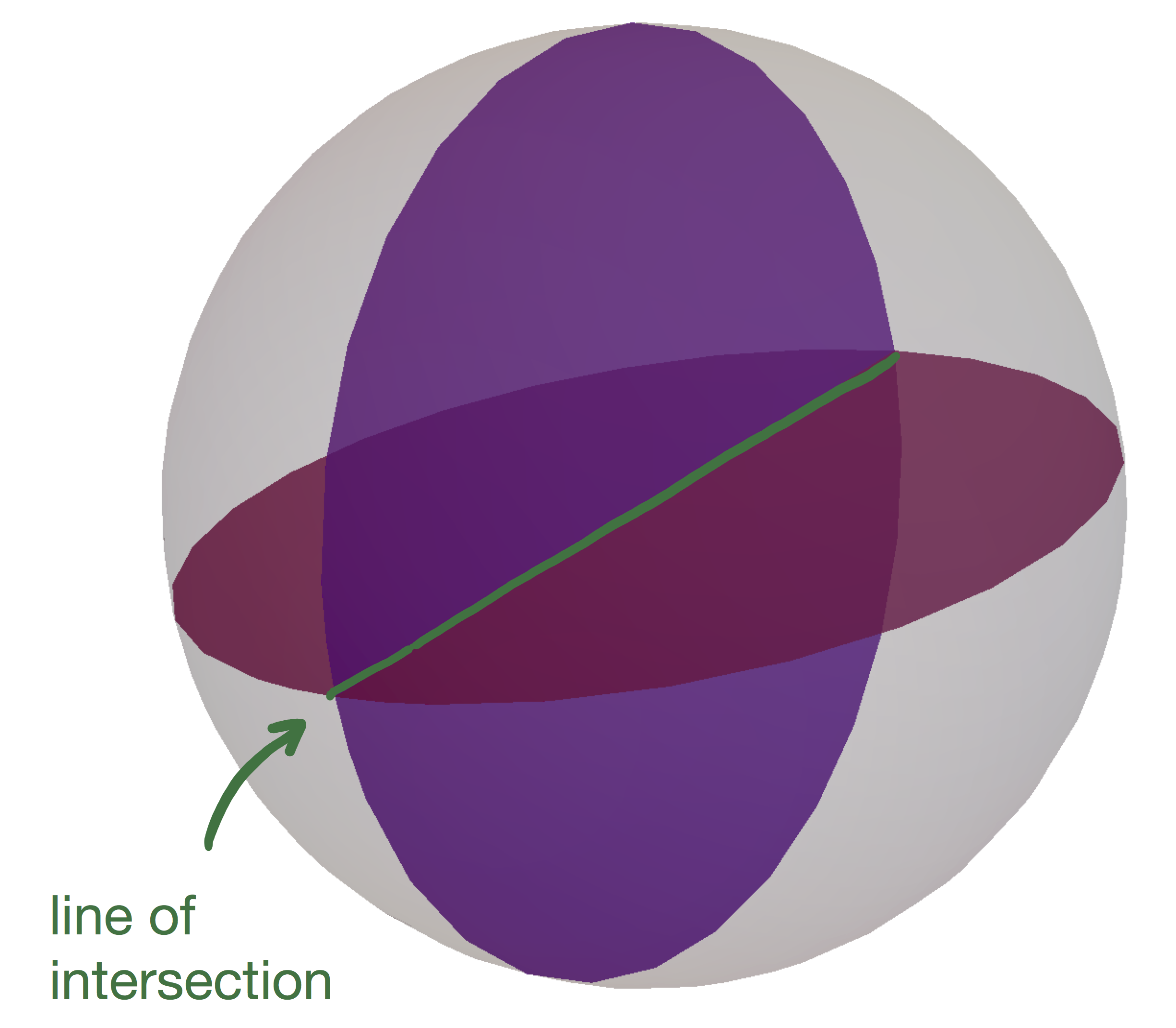}
    \end{subfigure}
    \begin{subfigure}{.43\textwidth}
        \centering
        \includegraphics[width=1\textwidth]{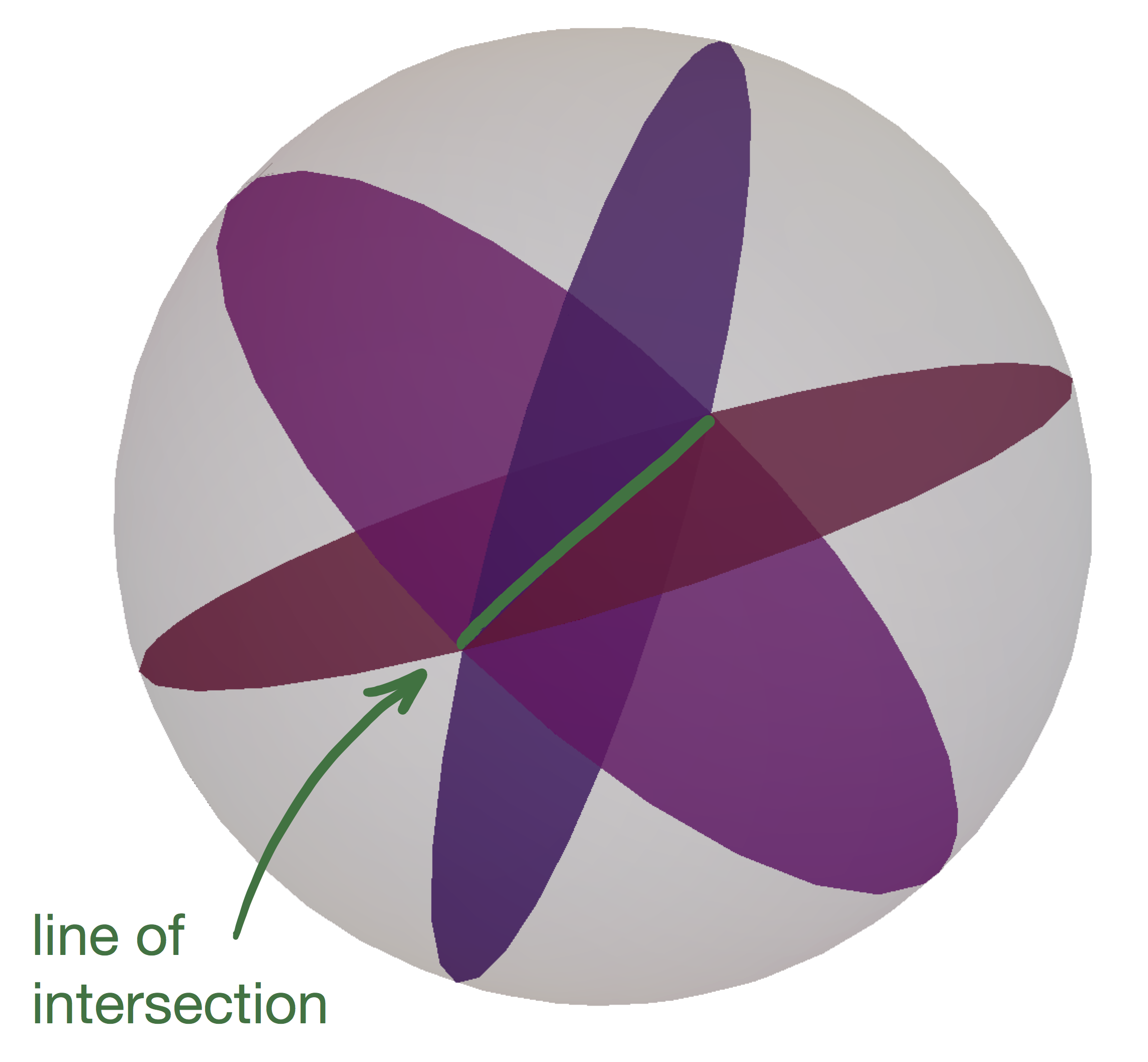}
    \end{subfigure}
    \caption{The intersection of two and three equatorial disks which possess much less symmetry.}
    \label{Fig:KL2}
\end{figure}

On the other hand, there have been several doubling and tripling constructions of free boundary minimal surfaces in $\B^3$. The first such construction is given by Folha, Pacard and Zolotareva in \cite{Folha-Pacard-Zolotareva17}. They considered taking two nearby copies of the equatorial disk $\mathbb{D}$ joined together by a large number of half catenoidal bridges near the boundary (and possibly a catenoidal neck joining the center of the disks) and showed that one of these can be perturbed to an exact free boundary minimal surface in $\B^3$. As a result, they obtained the following:

\begin{theorem}[Folha-Pacard-Zolotareva \cite{Folha-Pacard-Zolotareva17}]
For any $n \in \N$ sufficiently large, there exists a genus $0$ surface $\Sigma_n^{FPZ}$ and a genus $1$ surface $\tilde{\Sigma}_n^{FPZ}$ which are both smooth embedded free boundary minimal surfaces in $\B^3$ with $n$ boundary components. As $n \to \infty$, each of the sequences $\{\Sigma_n^{FPZ}\}$ and $\{\tilde{\Sigma}_n^{FPZ}\}$ converges in the Hausdorff sense (and smooth away from the catenoidal neck and bridges) to an equatorial disk with multiplicity two.
\end{theorem}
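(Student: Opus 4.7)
The plan is to construct a family of approximate solutions by doubling the equatorial disk $\mathbb{D}$ and splicing in catenoidal pieces, and then to perturb such an approximate solution to an exact free boundary minimal surface via a contraction mapping argument. First, fix a large integer $n \in \N$ and take two parallel copies of $\mathbb{D}$ in $\B^3$ placed symmetrically at heights $\pm \tau$ about the equatorial plane, where $\tau = \tau(n) > 0$ will be determined. Place $n$ points equidistributed on $\partial \mathbb{D}$ using a dihedral group action $D_n$, and at each point excise small neighborhoods from both copies and glue in a half-catenoidal bridge of scale $\delta = \delta(n)$ that has been suitably bent (by higher-order corrections) to meet $\partial \B^3$ orthogonally. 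For the genus-$1$ family $\tilde{\Sigma}_n^{FPZ}$, additionally excise a disk at the center of each copy and glue in a full catenoidal neck, also of scale comparable to $\delta$. This yields a smooth family of initial surfaces $M_{n,\tau,\delta}$ with $D_n \times \Z/2$ symmetry (and $\Sigma_n^{FPZ}$ has genus $0$, while $\tilde{\Sigma}_n^{FPZ}$ has genus $1$).

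The second step is to choose the parameters $\tau, \delta$ as functions of $n$ by the usual gluing balance. The catenoid has logarithmic growth, so matching the opening of a half-catenoid of scale $\delta$ with the vertical separation $\tau$ demands $\tau \sim \delta \log(1/\delta)$. A second relation arises by requiring the solution of the Robin boundary value problem $\Delta \phi = 0$ on $\mathbb{D}$ with prescribed distributional data on the singular loci to agree (to leading order) on both copies once the catenoidal interactions and the free boundary data are included; this forces $\delta$ to be essentially of order $1/n$ times a logarithmic factor. With these choices the mean curvature of $M_{n,\tau,\delta}$ and the defect in the Robin condition $\partial x_i/\partial \nu = x_i$ can be estimated, uniformly in $n$, in a weighted Hölder norm $C^{2,\alpha}_\mu$ adapted to the catenoidal scales near the bridges and neck.

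The central analytic step is to invert the linearization of the free boundary minimal surface operator on $M_{n,\tau,\delta}$ in the $D_n \times \Z/2$-invariant class, with uniform bounds as $n \to \infty$. The Jacobi operator on the flat disk with Robin boundary condition $\partial/\partial \nu - 1$ (coming from linearizing (3) of Theorem \ref{T:definitions}) has a nontrivial kernel corresponding to rigid motions of $\B^3$, but the imposed symmetry eliminates all of these except the vertical translation, which in turn can be absorbed by varying $\tau$; the horizontal Jacobi fields of each catenoidal piece also lie in the non-invariant sector. The residual obstructions come from the unbounded kernel of the catenoid Jacobi operator and must be handled by matching the inner (catenoidal) and outer (disk) linear problems along their common transition annuli, producing a model Dirichlet-to-Neumann matching problem whose solvability depends on a balance condition on $\tau, \delta$. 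Once this is verified, a standard contraction mapping in a small ball of $C^{2,\alpha}_\mu$ yields an exact free boundary minimal perturbation of $M_{n,\tau,\delta}$, embedded and $D_n$-symmetric.

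The main obstacle is the uniform invertibility of the linearization as $n \to \infty$: the number of catenoidal pieces grows without bound while each piece shrinks, so one needs a careful semi-local analysis region by region, combined with a global Fredholm decomposition controlling the pairing between the approximate kernel (translations of each bridge/neck) and the mismatch data. Once the linear theory is in place, the remaining ingredients—Hausdorff convergence to $\mathbb{D}$ of multiplicity two and smooth convergence away from the necks and bridges—follow immediately from $\tau, \delta \to 0$ and the explicit scale of the catenoidal pieces.
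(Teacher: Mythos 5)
Your proposal follows essentially the same route as the construction the paper describes (and attributes to Folha--Pacard--Zolotareva): double the equatorial disk $\mathbb{D}$, join the two copies by $n$ half-catenoidal boundary bridges (plus a central catenoidal neck for the genus-one family), impose the dihedral-plus-reflection symmetry, balance the parameters, and perturb to an exact solution by inverting the linearized free boundary minimal surface operator uniformly in $n$ via a fixed point argument. The survey itself only sketches this strategy and defers the analytic details (the parameter balancing and the uniform linear estimates) to the original paper, so your outline matches the intended proof in both structure and key ingredients.
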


Each of the surfaces $\Sigma_n^{FPZ}$ and $\tilde{\Sigma}_n^{FPZ}$ is symmetric about a discrete group of reflections generated by planes passing through the rotationally invariant axis and a horizontal reflection switching the two copies of the disks. Note that the surfaces $\Sigma^{FPZ}_n$ are homeomorphic to the Fraser-Schoen surfaces $\Sigma^{FS}_n$ in Theorem \ref{T:FS-genus-0}. It is conjectured that they are actually the same surfaces . Note that some positive evidence was given by McGrath \cite[Corollary 2]{McGrath18} who showed that $\sigma_1(\Sigma^{FPZ}_n)=1$. 

In a recent paper of Kapouleas and Wiygul \cite{Kapouleas-Wiygul17}, they considered a tripling construction from the equatorial disks and produced surprisingly an infinite family of new examples of free boundary minimal surfaces in $\B^3$ with \emph{connected} boundary. 

\begin{theorem}[Kapouleas-Wiygul \cite{Kapouleas-Wiygul17}]
For any $g \in \N$ sufficiently large, there exists a smooth embedded free boundary minimal surface $\Sigma_g^{KW}$ in $\B^3$ which has genus $g$ and one boundary component. As $g \to \infty$, the sequence $\{\Sigma_g^{KW}\}$ converges in the Hausdorff sense (and smooth away from the catenoidal bridges) to the equatorial disk $\mathbb{D}$ with multiplicity three.
\end{theorem}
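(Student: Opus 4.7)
The plan is to construct $\Sigma_g^{KW}$ by a singular perturbation (gluing) argument in the spirit of Kapouleas, taking as background three nearly-coincident copies of the equatorial disk $\mathbb{D}$ and using small catenoidal bridges as the desingularizing model. First I would impose a large dihedral symmetry group of order roughly $2k$ acting by rotations about the axis perpendicular to $\mathbb{D}$ through the origin and by reflections through vertical half-planes containing this axis, together with a horizontal reflection exchanging the top and bottom disks while fixing the middle one. Since the target surface is required to have a single boundary component, the bridges near $\partial \mathbb{B}^3$ must be placed so that the three boundary circles of the disks are merged: in the simplest symmetric arrangement one puts $k$ half-catenoidal boundary bridges between the top and middle disks and another $k$ between the middle and bottom disks, offset by angle $\pi/k$ so that traversing the boundary zig-zags through all three sheets exactly once. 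Additional interior catenoidal necks, placed in concentric rings equivariant under the same symmetry group, supply the extra handles needed to reach prescribed large genus $g$.

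From this combinatorial data I would build a family of smooth initial surfaces $M_{\tau,k}$, where $\tau$ ranges over a small, finite-dimensional parameter space. Each $M_{\tau,k}$ is obtained by removing small geodesic disks around the foot of each prospective bridge from the three (slightly perturbed, nearly equatorial) disks and gluing in a suitably rescaled and bent catenoid, of neck size of order $k^{-1}$, with a narrow transition annulus in which the catenoidal and planar graphs are interpolated by cutoff functions. The parameters $\tau$ encode the vertical separations between the three disks and small \emph{unbalancing} tilts of the bridge axes, introduced precisely to cancel the finite-dimensional approximate cokernel of the linearized equation. By construction, as $k\to\infty$ the neck scale tends to zero and $M_{\tau,k}$ converges in Hausdorff sense to $\mathbb{D}$ with multiplicity three, smoothly away from the shrinking bridge loci; the convergence statement in the theorem is thus automatic once existence is established.

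The analytic heart of the argument is the inversion of the Jacobi operator $\mathcal{L}_{M_{\tau,k}} = \Delta + |A|^2$ under the Robin-type boundary condition coming from the free boundary condition, restricted to functions invariant under the imposed symmetries. Using separation of variables on the catenoidal pieces and standard elliptic theory on the disk pieces, one produces right inverses on each model with uniform estimates in suitable weighted H\"older norms; these are patched together via cutoffs in the transition regions. The approximate kernel coming from translations and rotations of $\mathbb{B}^3$, from the logarithmic Jacobi fields of each model catenoid, and from vertical translations of the middle disk must be quotiented out, and the available $\tau$-parameters must be shown to provide a non-degenerate pairing with its dual. Combining the uniform linear estimates with the standard quadratic nonlinear bounds and Schauder's fixed point theorem, as in \cite{Kapouleas-Li17}, one finds for every sufficiently large $k$ a choice of $\tau$ for which $M_{\tau,k}$ can be perturbed to an exact free boundary minimal surface $\Sigma_g^{KW}$; embeddedness follows because the initial surface is embedded and the perturbation is small in $C^1$.

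The main obstacle, in my view, is that unlike the doubling construction of Folha--Pacard--Zolotareva, the middle disk is fixed by the horizontal reflection, so the two translational Jacobi fields of $\mathbb{D}$ perpendicular to its axis survive in the symmetric function space and cannot be killed by the $\mathbb{Z}_2$ symmetry alone. The delicate step is to verify that the unbalancing tilts of the top and bottom bridges generate a non-degenerate horizontal force on the middle sheet and that the vertical translation mode of the middle disk is controlled either by the rigidity of the equatorial disk as a free boundary minimal surface or by an extra parameter; this is the free-boundary analogue of the ``force balancing'' that underlies Kapouleas-type gluing constructions, and making it work in the presence of three sheets rather than two is the essential new difficulty beyond \cite{Folha-Pacard-Zolotareva17} and \cite{Kapouleas-Li17}.
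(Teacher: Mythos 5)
Your overall strategy is the same one used in the cited construction (which this survey only describes, it does not reprove): stack three nearly coincident equatorial disks, join them by half-catenoidal boundary bridges alternating between the top--middle and middle--bottom pairs so that the boundary becomes connected, impose a large dihedral symmetry plus the horizontal reflection fixing the middle sheet, and perturb a family of initial surfaces by inverting the linearized operator with Robin boundary condition and applying a fixed point argument. Two concrete points in your write-up are off, however. First, the interior catenoidal necks you add ``to supply the genus'' are unnecessary and misidentify where the topology comes from: with $k$ boundary bridges between the top and middle sheets and $k$ between the middle and bottom sheets, each bridge lowers the Euler characteristic by one, so $\chi = 3 - 2k$, and since the alternating placement produces a single boundary circle one gets genus $g = k-1$ already. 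Worse, interior necks between nearly coincident parallel disks would convert the problem into a doubling-type construction, which carries its own delicate horizontal and vertical balancing conditions at each neck; building them into the initial surfaces gratuitously would substantially complicate, not complete, the argument.

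Second, the ``main obstacle'' you single out is not actually present: the two horizontal translational Jacobi fields of the middle disk are not invariant under the rotations by $2\pi/k$ about the vertical axis that you have already imposed, so they do not survive in the equivariant function space and need no force-balancing argument. The genuinely delicate low modes in this setting are the rotationally invariant ones --- the relative vertical separations of the three sheets and the fact that for the equatorial disk the linearized (Robin) problem $\partial u/\partial \nu = u$ has nontrivial kernel coming from the coordinate functions --- and these are handled by the finite-dimensional family of initial surfaces (the separation/unbalancing parameters) together with an extended substitute kernel, exactly as in the scheme you outline in your third paragraph. So your proposal is the right roadmap, but the topology count and the identification of the obstructed modes should be corrected before the linear analysis can be set up as stated.
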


Note that the methodology in \cite{Kapouleas-Wiygul17} can also be applied to the situation of stacking more than three copies of the equatorial disk as done in \cite{Wiygul15} for the Clifford torus in $\mathbb{S}^3$. Kapouleas and McGrath \cite{Kapouleas-McGrath} have recently announced a doubling construction of the critical catenoid which would produce new examples of high genus and 4 boundary components.

\subsection{Min-max construction} 

During the last few years there has been substantial development on the existence theory of minimal surfaces. In particular, the min-max method pioneered by Almgren and Pitts \cite{Pitts} has been greatly advanced by a series of recent work of Marques and Neves (see e.g. \cite{Marques-ICM} \cite{Neves-ICM} for an excellent survey of some recent results). In the free boundary setting, Gr\"{u}ter and Jost \cite{Gruter-Jost86} were the first to apply the continuous min-max construction of Smith and Simon \cite{Smith82} to produce an embedded free boundary minimal disk in any convex body of $\R^3$ (there was also another approach of harmonic maps by Struwe \cite{Struwe84} using the method of Sacks and Uhlenbeck). A more general existence result was obtained by the author in \cite{Li15}. In a joint work with X. Zhou, the author established the Almgren-Pitts min-max theory in the free boundary setting for any compact Riemannian manifold with boundary. Combining with Lusternik-Schnirelmann theory, we were able to show that there exist infinitely many smooth, geometrically distinct, properly embedded free boundary minimal hypersurfaces in any compact Riemannian manifold $(M^n,g)$, where $3 \leq n \leq 7$, with nonnegative Ricci curvature and strictly convex boundary. More generally, the same holds for any Riemannian manifold $(M,g)$ satisfying the \emph{embedded Frankel property} (c.f. Theorem \ref{T:Frankel} and \cite[Definition 1.3]{Marques-Neves17}). 

Unfortunately, the theorem mentioned above does not say anything new when the ambient space is $\B^n$. The reason is that once an embedded free boundary minimal hypersurface (say $\B^{n-1}$) is given, we can generate an infinite family of examples by the isometries of $\B^n$. This corresponds to the degenerate case of Lusternik-Schnirelmann theory that $\omega_p=\omega_{p+1}$ for some $p \in \N$ where $\omega_p$ here is the $p$-width of the ambient manifold. In fact, by Morse index considerations (c.f. Theorem \ref{T:FS-index} and \cite{Marques-Neves16}) it is not hard to see that $\omega_1(\B^n)=\omega_2(\B^n)= \cdots=\omega_n(\B^n)=|\B^{n-1}|$.

We now give a brief discussion on the \emph{$p$-width} of $\B^n$. In what follows we will use some terminology from geometric measure theory, we refer the readers to \cite{Li-Zhou16} for more details and precise definitions. Let $\mathcal{Z}_{n-1,rel}(\B^n,\partial \B^n;\Z_2)$ be the space of equivalence classes of $(n-1)$-dimensional relative cycles in $\B^n$ with $\Z_2$ coefficients. Equipped with the flat topology, the space $\mathcal{Z}_{n-1,rel}(\B^n,\partial \B^n;\Z_2)$ is weakly homotopically equivalent to $\R\mathbb{P}^\infty$. Hence, $H^1(\mathcal{Z}_{n-1,rel}(\B^n,\partial \B^n;\Z_2),\Z_2)=\Z_2$ and we denote its generator by $\overline{\lambda}$. For any finite dimensional simplicial complex $X$, a map $\Phi:X \to \mathcal{Z}_{n-1,rel}(\B^n,\partial \B^n;\Z_2)$ is called a \emph{$p$-sweepout} if $\Phi$ is continuous in the flat topology and $\Phi^*(\overline{\lambda}^p) \neq 0 \in H^p(X;\Z_2)$, where $\overline{\lambda}^p=\overline{\lambda} \cup \cdots \cup \overline{\lambda}$ is the $k$-times cup product of $\overline{\lambda}$ in $H^*(\mathcal{Z}_{n-1,rel}(\B^n,\partial \B^n;\Z_2),\Z_2)$. Denote $\mathcal{P}_p$ to be the set of all $p$-sweepouts $\Phi$ that have \emph{no concentration of mass} (see \cite[\S 3.3]{Marques-Neves17} for a precise definition). We define the \emph{$p$-width} of $\B^n$ as
\[ \omega_p(\B^n):=\inf_{\Phi \in \mathcal{P}_p} \sup \{ |\Phi(x)| \; : \; x \in \textrm{dmn}(\Phi)\}, \]
where $\textrm{dmn}(\Phi)$ is the domain of definition of $\Phi$. As explained in \cite{Marques-Neves17}, the $p$-width can be expressed as the infimum of the widths of homotopy classes of discrete $p$-sweepouts (see \cite{Marques-Neves17} for definitions):
\[ \omega_p(\B^n)=\inf_{\Pi \in \mathcal{D}_p} \textbf{L}(\Pi).\]
By the min-max theorem of \cite{Li-Zhou16} and the embedded Frankel property (Theorem \ref{T:Frankel}), for each $p \in \N$, there exists a smooth embedded free boundary minimal hypersurface $\Sigma_p$ in $\B^n$ and a positive integer $n_p \in \N$ such that 
\begin{equation}
\label{E:min-max}
\omega_p(\B^n)=n_p |\Sigma_p|.
\end{equation}
In \cite{Liokumovich-Marques-Neves18}, Lioukumovich, Marques and Neves proved that the $p$-widths satisfy a Weyl law conjectured by Gromov:
\[ \lim_{p \to \infty} \omega_p(\B^n) p^{-1/n}= a(n) |\B^n|^{\frac{n-1}{n}},\]
where $a(n)>0$ is a universal constant depending only on $n$. Using a sharp area bound for free boundary minimal hypersurfaces in $\B^n$ (see Theorem \ref{T:Brendle}), this implies that the multiplicity $n_p$ in (\ref{E:min-max}) can grow at most like $O(p^{1/n})$ as $p \to \infty$. 

\begin{question}
Is $\{\Sigma_p\}_{p \in \N}$ an infinite set? If so, does $|\Sigma_p| \to +\infty$ as $p \to \infty$?
\end{question}

It is generally believed that the minimal hypersurfaces realizing the $p$-width would become dense and equidistributed in the ambient space. Making use of the Weyl law \cite{Liokumovich-Marques-Neves18}, this has been proved for generic metrics in closed Riemannian manifolds by Irie-Marques-Neves \cite{Irie-Marques-Neves18} and Marques-Neves-Song \cite{Marques-Neves-Song17}. We would also like to mention that in a very recent paper \cite{Song18}, Song settled in its full generality (i.e. without assuming genericity of the metric) Yau's conjecture on the existence of infinitely many closed minimal hypersurfaces in any closed Riemannian manifold. Note that there are some very recent progress in the free boundary setting made by Z. Wang \cite{W1, W2}.

On the other hand, using a variant of the min-max theory in the continuous and equivariant setting, Ketover constructed another infinite family of examples of embedded free boundary minimal surfaces in $\B^3$.

\begin{theorem}[Ketover \cite{Ketover16}]
For each $g \geq 1$, there exists a smooth embedded free boundary minimal surface $\Sigma^K_g$ in $\B^3$ which is symmetric under a dihedral group of order $4g+4$. When $g$ is sufficiently large, $\Sigma^K_g$ has genus $g$ and three boundary components. Moreover, as $g \to \infty$, the sequence $\{\Sigma^K_g\}$ converges in the varifold sense to $\mathbb{D} \cup \mathbb{K}$. 
\end{theorem}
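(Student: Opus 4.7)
The plan is to apply an equivariant continuous min-max procedure of Simon-Smith type, adapted to the free boundary setting via Gr\"{u}ter-Jost \cite{Gruter-Jost86} and Li-Zhou \cite{Li-Zhou16}, using the dihedral group $G_g$ of order $4g+4$ acting on $\B^3$ by rotations of angle $\pi/(g+1)$ about the $x_3$-axis together with reflections across the $2g+2$ vertical half-planes through that axis. The singular configuration $\mathbb{D}\cup\mathbb{K}$ is $O(2)$-invariant, hence $G_g$-invariant for every $g$, and serves as the target varifold for the min-max limit. The first step is to construct, for each $g$, a one-parameter $G_g$-equivariant sweepout $\{\Sigma_t^g\}_{t\in[-1,1]}$ of $\B^3$ by $G_g$-invariant relative $\Z_2$-cycles with $\Sigma_{\pm 1}^g = 0$, $\Sigma_0^g = \mathbb{D}\cup\mathbb{K}$, and $\max_t |\Sigma_t^g| = |\mathbb{D}|+|\mathbb{K}|$; this is achieved by contracting the four complementary regions of $\mathbb{D}\cup\mathbb{K}$ in $\B^3$ equivariantly. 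The resulting family represents a nontrivial $G_g$-equivariant relative homology class, so the min-max width $W_g$ is strictly positive and satisfies $W_g \leq |\mathbb{D}|+|\mathbb{K}|$.

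By the equivariant free boundary min-max theorem, $W_g$ is realized by a $G_g$-invariant stationary integral varifold $V_g$ with free boundary in $\B^3$. Equivariant interior regularity (of Colding-De Lellis type) combined with boundary regularity of Gr\"{u}ter-Jost and Li-Zhou, plus the fact that $G_g$ acts with cohomogeneity one off the $x_3$-axis, imply that $V_g = n_g\,[\Sigma^K_g]$ where $\Sigma^K_g$ is a smooth embedded $G_g$-invariant free boundary minimal surface and $n_g \in \N$. A matching lower bound $W_g \geq |\mathbb{D}|+|\mathbb{K}|-o_g(1)$ follows from a catenoid-estimate-type argument (cf. Marques-Neves): any $G_g$-equivariant sweepout in the given class must, for $g$ large, produce slices whose area approaches $|\mathbb{D}|+|\mathbb{K}|$ at some parameter, since the only smaller limiting $O(2)$-invariant configurations cannot separate the axis in the required equivariant way. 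Thus $W_g \to |\mathbb{D}|+|\mathbb{K}|$. The multiplicity $n_g$ is forced to be $1$: as $g\to\infty$ the symmetry group $G_g$ becomes dense in $O(2)$, so any limit of $\Sigma^K_g$ is $O(2)$-invariant, and the classification of cohomogeneity-one free boundary minimal surfaces gives only $\mathbb{D}$ and $\mathbb{K}$; combined with $n_g|\Sigma^K_g|\to |\mathbb{D}|+|\mathbb{K}|$ and $|\mathbb{K}|\neq|\mathbb{D}|$, this forces $n_g = 1$ and $\Sigma^K_g\to \mathbb{D}\cup\mathbb{K}$ in the varifold sense.

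Finally, the topology of $\Sigma^K_g$ is pinned down by Ketover's equivariant genus bound for Simon-Smith type min-max limits (an extension of Simon-Smith to the equivariant and free boundary setting, with an explicit handle-counting correction coming from the fixed locus of $G_g$ along the limit variety). The sweepout slices can be arranged to have genus $0$ and three boundary components, and the singular circle $\mathcal{C} = \mathbb{D}\cap\mathbb{K}$ contains $4g+4$ reflection-fixed points of $G_g$; the equivariant desingularization attaches exactly $g$ handles around $\mathcal{C}$, giving total genus $g$. The three boundary components persist from $\partial\mathbb{D}$ and the two components of $\partial\mathbb{K}$ in the varifold convergence. I expect the main obstacle to be showing that the min-max limit realizes this maximal desingularization rather than a trivial resolution of smaller genus or a higher-multiplicity collapse; this requires careful use of the equivariant lower semicontinuity of genus together with a sweepout design whose slices genuinely carry the equivariant cohomology class that forces all $g$ handles to appear in the limit.
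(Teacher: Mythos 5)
Your overall strategy is the right one and matches what the paper attributes to Ketover: an equivariant Simon--Smith type min-max in the free boundary setting with the dihedral group imposed, followed by an asymptotic analysis as $g \to \infty$ using the fact that the symmetry groups become dense in $O(2)$ and that the only cohomogeneity-one candidates are $\mathbb{D}$ and $\mathbb{K}$. However, two of your concrete steps would fail as designed. First, the sweepout you propose has genus-zero slices passing through $\mathbb{D} \cup \mathbb{K}$ with maximal area exactly $|\mathbb{D}|+|\mathbb{K}|$. The equivariant topological control in min-max theory (the lifting/surgery arguments of De Lellis--Pellandini and Ketover, adapted to the equivariant free boundary setting) only allows the limit surface to be obtained from the slices by finitely many equivariant compressions, which never increase genus; so genus-zero slices can only produce a genus-zero limit, and no ``handle-counting correction from the fixed locus'' creates genus that the slices do not already carry. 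In the actual construction the slices must themselves have genus $g$ upstairs (they are desingularized approximations of $\mathbb{D}\cup\mathbb{K}$ with $g+1$ necks, genus zero only in the quotient), and using such slices simultaneously yields the \emph{strict} upper bound $W_g < |\mathbb{D}|+|\mathbb{K}|$ coming from the area saved by opening the necks --- an inequality your design, whose maximal slice is the singular configuration itself, gives up.

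Second, your lower bound $W_g \geq |\mathbb{D}|+|\mathbb{K}|-o_g(1)$ is asserted via a ``catenoid-estimate-type argument,'' but the catenoid estimate is an upper-bound device, and the separation heuristic you offer is not a proof. This step is where the real work lies: since $2|\mathbb{D}| < |\mathbb{D}|+|\mathbb{K}|$, the upper bound alone does not exclude that the min-max varifolds converge to the equatorial disk with multiplicity two (nor to $\mathbb{D}$ or $\mathbb{K}$ with multiplicity one), and without excluding these your multiplicity-one and varifold-convergence conclusions, as well as the identification of the limit mass with $|\mathbb{D}|+|\mathbb{K}|$, do not follow. Ruling out these smaller $O(2)$-invariant limits requires a genuine argument (analysis of equivariant minimal surfaces varifold-close to $\mathbb{D}$ or $\mathbb{K}$, local uniqueness, and the topological nontriviality of the sweepout class), after which the genus statement for large $g$ is obtained by combining the genus upper bound inherited from the slices with the forced $(g+1)$-fold symmetric neck structure along $\mathcal{C}=\mathbb{D}\cap\mathbb{K}$; in your write-up both of these are asserted rather than proved, and the remark in the theorem that the topology is only controlled for $g$ sufficiently large reflects precisely this part of the argument.
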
 

The proof was inspired by a plausible variational construction of the Costa-Hoffman-Meeks surfaces suggested by Pitts and Rubinstein. By imposing the dihedral group symmetry, Ketover applied the equivariant min-max theory \cite{Ketover16b} to produce the surfaces $\Sigma^K_g$. It is interesting to note that one is only able to get a precise control on the topology of $\Sigma^K_g$ for \emph{large enough} $g$. Given that the topology and the symmetry group are the same as those constructed in Theorem \ref{T:KL}, it is reasonable to ask the following:

\begin{question}
For all $g$ sufficiently large, are the surfaces $\Sigma^{KL}_g$ and $\Sigma^K_g$ congruent to each other?
\end{question}

Ketover also proved that surfaces of genus $0$ with $\ell$ boundary components, isotopic to the Fraser-Schoen surfaces $\Sigma^{FS}_\ell$ in Theorem \ref{T:FS-genus-0} can be constructed variationally from a one-parameter equivariant min-max procedure \cite[Theorem 1.2]{Ketover16}. Finally, using the symmetry group of the Platonic solids, he was able to find three possibly new genus zero examples \cite[Theorem 1.3]{Ketover16}. These are the free boundary analogues of the construction of closed minimal surfaces in $\Sph^3$ by Karcher, Pinkall and Sterling \cite{Karcher-Pinkall-Sterling88}.

\begin{theorem}[Ketover \cite{Ketover16}]
There exists an example of free boundary minimal surface in $\B^3$ with tetrahedral symmetry of genus $0$ with $4$ boundary components; an example with octahedral symmetry of genus $0$ with $6$ boundary components; and an example of dodocahedral symmetry of genus $0$ with $12$ boundary components.
\end{theorem}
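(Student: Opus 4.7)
The plan is to apply the equivariant Almgren--Pitts min-max theory of \cite{Ketover16b}, adapted to the free boundary setting as in \cite{Li-Zhou16}, separately for each of the three Platonic symmetry groups $G \in \{T, O, I\}$, and then pin down the topology of the resulting surface by combining an equivariant genus bound with an orbit-counting argument on $\partial \B^3$. The overall strategy parallels the one used in the previous theorem to build $\Sigma^K_g$, but the dihedral symmetry is replaced by a Platonic one, so the ``special'' orbits on $\Sph^2$ now have sizes $4$, $6$, or $12$.

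First, for each $G$ I would construct a one-parameter $G$-equivariant sweepout $\{\Lambda_t\}_{t \in [0,1]}$ of $\B^3$ by $G$-invariant relative cycles whose associated map into $\mathcal{Z}_{2,\mathrm{rel}}(\B^3,\partial \B^3;\Z_2)$ detects the generator $\overline{\lambda}$. A natural model is to start with the degenerate slice consisting of $k$ equatorial half-disks fanning out from a $G$-axis toward the size-$k$ special orbit on $\Sph^2$ ($k=4$ for $T$, $k=6$ for $O$, $k=12$ for $I$), then thicken and sweep across $\B^3$ to the empty slice. By choosing the sweepout economically one arranges
\[ W_G \;:=\; \inf_{\Lambda} \sup_{t} |\Lambda_t| \;<\; 2\,|\mathbb{D}|, \]
which is the crucial non-triviality estimate ruling out min-max convergence to a multiplicity-two equatorial disk.

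Second, I would apply the equivariant free boundary min-max theorem to extract a smooth, embedded, $G$-invariant free boundary minimal surface $\Sigma_G \subset \B^3$ with $n_G |\Sigma_G| = W_G$ for some positive integer $n_G$. The genus is then controlled from above by the equivariant one-parameter genus bound (the analogue for free boundary min-max of the Simon--Smith genus estimate used by Ketover): since the generic slices of $\Lambda_t$ are spheres relative to $\partial \B^3$, one obtains $\mathrm{genus}(\Sigma_G) = 0$. The number of boundary components is then constrained from below by the $G$-orbit structure on $\partial \Sigma_G \subset \Sph^2$: every component count must be a sum of $G$-orbit sizes, and the smallest nontrivial orbit compatible with the sweepout is the chosen size-$k$ special orbit.

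Third, the decisive step, and in my view the main obstacle, is ruling out multiplicity ($n_G = 1$) and preventing boundary components from collapsing in the min-max limit. For this I would compare $W_G$ against the areas of the known $G$-invariant candidates (equatorial disks, critical catenoid, Fraser--Schoen $\Sigma^{FS}_k$), show the strict inequalities that force $n_G = 1$, and use a boundary regularity and non-concentration argument to conclude that the limit supports exactly $k$ boundary components rather than fewer with higher multiplicity. This last combinatorial/quantitative bookkeeping, rather than the abstract min-max machinery, is where the bulk of the technical work in \cite{Ketover16} lies, and is what ultimately pins down the three cases $k \in \{4, 6, 12\}$.
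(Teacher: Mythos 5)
This is a survey, and the paper does not actually prove the statement: it is quoted from Ketover \cite[Theorem 1.3]{Ketover16}, with only a one-line indication that the construction is an equivariant (continuous, Simon--Smith type) min-max argument with the Platonic symmetry groups, in the spirit of Karcher--Pinkall--Sterling. Your outline is broadly in the same spirit as that description, so at the level of strategy you are on the right track; but as a proof it has two genuine gaps.

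First, there is a setting mismatch. You invoke sweepouts by relative cycles in $\mathcal{Z}_{2,\mathrm{rel}}(\B^3,\partial\B^3;\Z_2)$ detecting $\overline{\lambda}$ (the Almgren--Pitts/Li--Zhou framework), but the topological control you then rely on --- the genus bound ``since the generic slices are spheres relative to $\partial\B^3$'' and any count of boundary components of the limit --- is precisely what the flat-topology cycle setting does not give. The equivariant theory of \cite{Ketover16b} that Ketover uses is the continuous min-max with \emph{smooth} sweepouts, where Simon--Smith style genus and boundary-component control (and its equivariant refinement) is available; you cannot detect the cohomological class in the cycle space and then import those estimates. Second, even granting the smooth equivariant setting, the decisive step of pinning the topology to exactly genus $0$ with exactly $4$, $6$, or $12$ boundary components is asserted rather than proved. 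Your orbit-counting claim that ``every component count must be a sum of $G$-orbit sizes'' does not force the count: a boundary circle in $\Sph^2$ can be invariant under a large subgroup of $G$ (so components need not lie in free orbits), and nothing in the argument excludes the limit being, say, an equatorial disk, a catenoid, or a surface with fewer components and higher multiplicity. Ruling these out requires the explicit width comparisons $W_G<2|\mathbb{D}|$ (which needs concrete competitor sweepouts, not just the statement that one ``arranges'' it), together with a genuine multiplicity-one and non-collapsing argument; the survey itself signals how delicate this is by noting that even in the dihedral construction the topology of $\Sigma^K_g$ is only controlled for $g$ sufficiently large. So the skeleton matches Ketover's approach as described here, but the parts you defer --- the correct min-max framework for topological control, the width estimates, and the exact boundary count --- are exactly where the proof lives.
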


It is not clear that whether it is possible that these surfaces are the same as the Fraser-Schoen surfaces $\Sigma^{FS}_\ell$ for $\ell=4,6,12$. We would like to remark that Carlotto, Franz and Schulz has recently announced in \cite{CFS} some constructions of new free boundary minimal surfaces in $\mathbb{B}^3$ with connected boundary and arbitrary genus using min-max methods.

\section{Uniqueness questions for free boundary minimal surfaces}
\label{S:Uniqueness}

In this section, we discuss some uniqueness results for free boundary minimal surfaces in $\B^n$ which are either topologically a disk or an annulus. Since any isometry of $\B^n$ (e.g. rotations and reflections but not translations) takes a free boundary minimal surface to another free boundary minimal surface, one can at most determine a free boundary minimal surface up to isometries of $\B^n$.

\subsection{Disk-type solutions}

In 1985, Nitsche proved the following uniqueness theorem for disk-type free boundary minimal surfaces in $\B^3$. Note that the result holds even for \emph{immersed} surfaces.

\begin{theorem}[Nitsche \cite{Nitsche85}]
\label{T:Nitsche}
The equatorial disk is the only (up to rigid motions of $\B^3$) immersed free boundary minimal disk in $\B^3$. 
\end{theorem}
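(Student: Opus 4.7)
The plan is to deduce the rigidity via the Hopf differential, exploiting the fact that the free boundary condition forces a holomorphic quadratic differential on the disk to be ``real'' on the boundary circle. Let $\varphi:\Sigma\to\B^3$ be an immersed free boundary minimal disk. By uniformization of Riemann surfaces with boundary, we may assume $\Sigma$ is the closed unit disk $\overline{D}\subset\mathbb{C}$ and that $\varphi$ is a conformal immersion. In the standard coordinate $z=x+iy$, choose a unit normal $\mathbf{n}$ along $\varphi$ (possible since $\Sigma$ is orientable) and write $L_{ij}=\langle \varphi_{ij},\mathbf n\rangle$ for the scalar second fundamental form. Define the Hopf differential
\[
\Phi(z)\,dz^{2},\qquad \Phi(z):=\tfrac{1}{4}(L_{xx}-L_{yy})-\tfrac{i}{2}L_{xy}.
\]

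The first step is to verify that $\Phi$ is holomorphic on $D$. In isothermal coordinates minimality reads $L_{xx}+L_{yy}=0$, and the Codazzi equations then reduce to the Cauchy--Riemann equations for $\Phi$. This is entirely classical and needs no new ideas.

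The key second step is to translate the free boundary condition into a reality condition for $\Phi$ on $\partial D$. By Theorem \ref{T:definitions} (equivalently, by the orthogonality of $\Sigma$ and $\Sph^{2}$), the outward conormal $\nu$ along $\partial\Sigma$ equals the position vector of $\Sph^{2}$. Hence for a unit tangent $T$ to $\partial\Sigma$ one has $D_{T}\nu=D_{T}\varphi=T$, which is tangent to $\Sigma$; taking the normal part gives $\langle A(T,\nu),\mathbf n\rangle=0$ on $\partial D$. Writing this in polar coordinates $(r,\theta)$ and tracking the transformation between $(\partial_x,\partial_y)$ and $(\partial_r,\partial_\theta)$ identifies the vanishing of $A(\partial_r,\partial_\theta)$ at $r=1$ with
\[
\operatorname{Im}\!\bigl(z^{2}\Phi(z)\bigr)=0 \quad \text{on } |z|=1.
\]

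Now set $f(z):=z^{2}\Phi(z)$. It is holomorphic on $D$, extends continuously to $\overline{D}$ by smoothness of the immersion, and is real on $\partial D$. Hence $\operatorname{Im} f$ is harmonic on $D$ with vanishing boundary values, so $\operatorname{Im} f\equiv 0$ by the maximum principle, and $f$ must be a real constant $c$. Since $\Phi(z)=c/z^{2}$ is required to be holomorphic at the origin, $c=0$, i.e. $\Phi\equiv 0$. The vanishing of the Hopf differential is exactly the vanishing of the traceless part of $A$; combined with $\vec{H}\equiv 0$ this yields $A\equiv 0$, so $\varphi(\Sigma)$ is totally geodesic and therefore lies in an affine $2$-plane of $\R^{3}$. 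The orthogonality of $\partial\Sigma\subset\Sph^{2}$ forces this plane to pass through the origin, giving the equatorial disk. The only step demanding genuine care is the precise conversion of the free boundary condition into the reality of $z^{2}\Phi(z)$ on $\partial D$, but this is a bookkeeping computation rather than a conceptual obstacle; the rest is standard complex analysis.
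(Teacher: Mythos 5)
Your proof is correct and follows essentially the same Hopf-differential argument as the paper: minimality makes $\Phi\,dz^{2}$ holomorphic, the free boundary condition makes it totally real along $\partial D$ (your computation that $z^{2}\Phi$ is real on $|z|=1$ is the correct explicit form of this), and its vanishing forces the disk to be totally geodesic and hence equatorial. The only cosmetic difference is that you eliminate $z^{2}\Phi$ by an elementary maximum-principle argument where the paper invokes Riemann--Roch to rule out nontrivial holomorphic quadratic differentials on the disk with real boundary values.
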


\begin{proof}
The proof is based on a Hopf-differential argument. Using isothermal coordinates, we can represent the minimal disk as a conformal minimal immersion $u:D \to \B^3$. Let $h$ be its second fundamental form extended $\mathbb{C}$-linearly and consider the quadratic differential $\Phi:=h(\frac{\partial}{\partial z},\frac{\partial}{\partial z})dz^2$ defined on $D$. Minimality implies that $\Phi$ is holomorphic and the free boundary condition implies that $\Phi$ is totally real on $\partial D$. By Riemann-Roch, no such non-trivial holomorphic quadratic differential exists on $D$ and hence $\Phi \equiv 0$. Thus, $u$ is totally geodesic and hence must be an equatorial disk.
\end{proof}

Recently, Fraser and Schoen generalized Nitsche's result to higher codimensions and allowing possibly some branch points. This is surprising as there are many non-totally geodesic immersed minimal two-spheres in $\mathbb{S}^n$ for $n \geq 4$ \cite{Calabi67}. This shows that free boundary minimal surfaces in $\B^n$ are in some sense more rigid than closed minimal surfaces in $\Sph^n$. 

\begin{theorem}[Fraser-Schoen \cite{Fraser-Schoen15}]
Any proper branched, immersed free boundary minimal disk in $\B^n$, $n \geq 3$, must be an equatorial plane disk.
\end{theorem}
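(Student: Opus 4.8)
The plan is to follow the strategy of Nitsche's proof of Theorem~\ref{T:Nitsche}, but — since in codimension $\ge 2$ the second fundamental form, hence the Hopf differential $A(\partial_z,\partial_z)\,dz^2$, is normal‑bundle valued rather than scalar — to replace the single scalar quadratic differential by an infinite tower of scalar holomorphic differentials. First I would fix a conformal branched parametrization $u=(u_1,\dots,u_n):D\to\B^n$ of the disk by the unit disk $D\subset\mathbb{C}$: minimality makes $u$ harmonic, so every derivative $\partial_z^k u$ is a holomorphic $\mathbb{C}^n$‑valued map and conformality reads $\langle\partial_z u,\partial_z u\rangle=0$ (bilinear extension of the Euclidean product); properness together with boundary regularity make $u$ smooth up to $\partial D$, with $|u|\equiv 1$ there and the free boundary condition in the form $\partial_r u=\la\,u$ on $\partial D$, $\la>0$ being the conformal factor along the boundary. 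For each $k\ge 1$ set
\[ \Psi_{2k}:=\langle\partial_z^k u,\partial_z^k u\rangle\,dz^{2k}, \]
a holomorphic $2k$‑differential on $D$. By conformality $\Psi_2\equiv 0$, while a short computation identifies $\langle\partial_z^2 u,\partial_z^2 u\rangle$ with the (bilinear) square norm of $A(\partial_z,\partial_z)$, so the $\Psi_{2k}$ do record the extrinsic geometry.

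The first step is to show each $\Psi_{2k}$ is \emph{real on $\partial D$}. For $k=2$ this is a direct boundary computation: writing $z\,\partial_z u=P+iQ$ on $\partial D$ with $P,Q\in\R^n$, conformality gives $|P|=|Q|$ and $\langle P,Q\rangle=0$, while $|u|=1$ and $\partial_r u=\la u$ give $P=\tfrac{\la}{2}u$ and $Q=-\tfrac12 u_\theta$; differentiating the boundary identities $|u|=1$, $|u_\theta|=\la$, $\langle u,u_\theta\rangle=0$ then yields $z^4\langle\partial_z^2 u,\partial_z^2 u\rangle\in\R$ on $\partial D$, which is reality of $\Psi_4$. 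For $k\ge 3$ I would run the analogous (longer) computation, now also invoking the already‑established vanishing of $\Psi_{2j}$ for $j<k$. Once $\Psi_{2k}$ is holomorphic on $D$, continuous up to $\partial D$ and real there, the function $z^{2k}\langle\partial_z^k u,\partial_z^k u\rangle$ — holomorphic on $D$, vanishing at $0$, real on $\partial D$ — extends by Schwarz reflection in the unit circle to a holomorphic function on $\mathbb{C}\cup\{\infty\}$ which vanishes at $\infty$, hence is $\equiv 0$ by Liouville. Therefore $\langle\partial_z^k u,\partial_z^k u\rangle\equiv 0$ on $D$ for every $k$. (Branch points are zeros, not poles, of $\partial_z u$, so they create no difficulty here.)

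The second step converts this into planarity of the image. Differentiating the relations $\langle\partial_z^k u,\partial_z^k u\rangle\equiv 0$ shows that all the Taylor coefficients of $\partial_z u$ at any point are pairwise null, so $\partial_z u$ takes values in a fixed totally isotropic subspace $V\subset\mathbb{C}^n$, of complex dimension $d\le\lfloor n/2\rfloor$. Integrating, and after an orthogonal change of coordinates on $\R^n$ normalising $V$, $u$ becomes a branched \emph{holomorphic} curve $G=(G_1,\dots,G_d):D\to\mathbb{C}^d\cong\R^{2d}\subset\R^n$, the remaining coordinates of $u$ being constant and hence — by $\partial_\nu u=u$ — zero. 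The boundary data now read $\sum_j|G_j|^2=1$ and $zG_j'/G_j=\la$ with the \emph{same} $\la$ for every $j$, on $\partial D$. In particular each $|G_j|^2$ is constant on $\partial D$, so each $G_j$ is a unimodular constant times a finite Blaschke product $B_j$; and since $zB_j'/B_j$ restricted to $\partial D$ is the sum of the Poisson kernels at the zeros of $B_j$ — so that its Fourier coefficients are the power sums of those zeros — the equality $zB_j'/B_j=\la$ for all $j$ forces the $B_j$ to share the same zeros, hence to be mutually proportional. Thus $u(D)$ lies in a linear $2$‑plane, and being a proper branched conformal disk it fills the intersection of that plane with $\B^n$; i.e.\ $u(D)$ is an equatorial plane disk.

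The crux, and the point where higher codimension genuinely bites, is the passage from the vanishing of the Hopf‑type differentials to planarity: in codimension one $\Psi_4\equiv 0$ already forces total geodesy, whereas for $n\ge 4$ it only says $\partial_z^2 u$ is null, so one must exploit the whole tower $\{\Psi_{2k}\}_{k\ge 1}$ to extract the totally isotropic (i.e.\ holomorphic‑curve) structure before the boundary Blaschke‑product rigidity can collapse the curve onto a complex line. Correspondingly, the hardest single ingredient is the boundary computation establishing that $\Psi_{2k}$ is real on $\partial D$ for all $k$, where the inductive use of the lower‑order vanishings is the key.
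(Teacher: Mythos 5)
Your route is genuinely different from the paper's after the $k=2$ stage, and most of what you actually carry out is sound: holomorphy of the $\Psi_{2k}$, the $k=2$ boundary-reality computation (with $P=\tfrac{\lambda}{2}u$, $Q=-\tfrac12 u_\theta$), the reflection/Liouville step, the passage from total isotropy of $u_z$ to a holomorphic curve $G=(G_1,\dots,G_d)$ with the remaining coordinates zero, and the endgame ($zG_j'/G_j=\lambda$ real forces $|G_j|$ constant on $\partial D$, Blaschke products with identical Poisson-kernel sums share zeros, hence the $G_j$ are proportional and the image is a plane disk) are all correct, granting the boundary regularity you invoke. The genuine gap is exactly at what you yourself identify as the hardest ingredient: the claim that $z^{2k}\langle \partial_z^k u,\partial_z^k u\rangle$ is real on $\partial D$ for every $k\ge 3$ is asserted, with ``run the analogous (longer) computation'' standing in for a proof that must cover all $k$ simultaneously. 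The claim does seem to be true: for $k=3$, after using the vanishing of the lower differentials (which also kills all cross terms $\langle \partial_z^a u,\partial_z^b u\rangle$, $a+b\le 5$), reality reduces to the pointwise identity $\langle \gamma''',(\lambda\gamma)''\rangle=0$ for the boundary curve $\gamma$, which follows from $|\gamma|=1$, $|\gamma'|=\lambda$ and the relation $|\gamma''|^2=\lambda'^2+\lambda^4$ that is the boundary trace of $\Psi_4\equiv 0$; an analogous check succeeds at $k=4$. But each order uses the lower-order real-part identities in a new way, so an actual proof needs an organizing inductive identity, and as written your argument is incomplete precisely at its crux.

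More importantly, the whole tower is unnecessary, and this is where your diagnosis of ``where higher codimension bites'' diverges from the paper. Fraser and Schoen stop at the single quantity $\Phi=z^4(u_{zz}^\perp\cdot u_{zz}^\perp)$, which coincides with your $z^4\langle u_{zz},u_{zz}\rangle$ since the tangential part of $u_{zz}$ is a multiple of the null vector $u_z$. The key point you miss is that \emph{on the boundary} the free boundary condition gives $A(T,\nu)=0$ (with $T$ the unit tangent of $\partial\Sigma$ and $\nu=x$ the conormal), so by minimality $u_{zz}^\perp$ is, along $\partial D$, a complex scalar times the single real normal vector $A(T,T)$; in fact $z^4\langle u_{zz}^\perp,u_{zz}^\perp\rangle=\tfrac{\lambda^4}{4}|A(T,T)|^2\ge 0$ there. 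Hence $\Phi\equiv 0$ already forces the full second fundamental form to vanish along $\partial\Sigma$ --- not merely nullity of $A(\partial_z,\partial_z)$ --- whence the boundary curve satisfies $\ddot\gamma=-\gamma$ and is a great circle met orthogonally, and a unique-continuation/Bj\"{o}rling-type comparison with the flat disk concludes. So the interior isotropy never has to be established; if you prefer your function-theoretic endgame, you should at least know that truncating at $k=2$ and feeding ``$A\equiv 0$ on $\partial\Sigma$'' into the boundary geometry is the shorter and fully rigorous path.
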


\begin{proof}
As in the proof of Theorem \ref{T:Nitsche}, we can take a conformal branched minimal immersion $u:D \to \B^n$ (note that the normal bundle is smooth across branch points). Consider the complex vector-valued function $\Phi:=z^4 (u_{zz}^\perp \cdot u_{zz}^\perp)$ where $u_{zz}^\perp$ denotes the component of $u_{zz}$ orthogonal to $\Sigma=u(D)$ and $\cdot$ denotes the $\mathbb{C}$-bilinear product, one can show using minimality that $\Phi$ is holomorphic on $D$ and the free boundary condition that $\Phi$ is totally real on $\partial D$. Therefore, $\Phi$ must be constant but $\Phi(0)=0$. Thus, we have $\Phi \equiv 0$ which implies that the second fundamental form of $\Sigma$ is zero on $\partial \Sigma$. Hence, $\partial \Sigma$ must be a great circle and $\Sigma$ must be an equatorial plane disk.
\end{proof}

We would like to point out that the arguments of Fraser and Schoen also work for free boundary disks with parallel mean curvature vector inside a geodesic ball of any $n$-dimensional space form.

\subsection{Free boundary minimal annuli}

In \cite{Nitsche85}, it was claimed without proof that the critical catenoid is the only free boundary minimal annulus in $\B^3$. The conjecture was restated again in \cite{Fraser-Li14}.

\begin{question}
Is the critical catenoid the only (up to rigid motions in $\B^3$) embedded free boundary minimal annulus in $\B^3$?
\end{question}

One should compare this conjecture with the well-known Lawson's conjecture which states that the only embedded minimal torus in $\Sph^3$ is the Clifford torus. The Lawson conjecture was answered affirmatively by Brendle \cite{Brendle13} using a maximum principle argument on an ingenious choice of two-point function. We expect that \emph{embeddedness} is an essential assumption as in the case of Lawson conjecture (see, for example, \cite[Theorem 1.4]{Brendle-survey}). However, we would like to point out that at the moment there is no single example (other than an $n$-fold covering of the critical catenoid) of an immersed free boundary minimal annulus in $\B^3$ which is not embedded. 

The following result may be helpful towards an answer to \textbf{Open Question 5}. 

\begin{lemma}
An immersed free boundary minimal annulus in $\B^3$ has no umbilic points. Hence, the second fundamental form is nowhere vanishing on the surface.
\end{lemma}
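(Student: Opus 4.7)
The plan is to follow Nitsche's Hopf-differential strategy from Theorem \ref{T:Nitsche}, but to exploit the annular topology to pin down the precise global form of the Hopf differential, and then invoke Gauss-Bonnet to rule out the degenerate case.

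First I would use uniformization to represent the annulus conformally as $A_R=\{z\in\mathbb{C}:1\le|z|\le R\}$ for some $R>1$, and write the immersion as a conformal minimal immersion $u:A_R\to\B^3$. With $h$ the second fundamental form extended $\mathbb{C}$-linearly, consider the Hopf differential $\Phi:=h(\partial_z,\partial_z)\,dz^2=\phi(z)\,dz^2$, whose zeros are precisely the umbilic points. Exactly as in Theorem \ref{T:Nitsche}, minimality via the Codazzi equations forces $\Phi$ to be holomorphic on $A_R$, while the free boundary condition forces $\Phi$ to be totally real on $\partial A_R$; concretely, $z^2\phi(z)\in\mathbb{R}$ whenever $|z|=1$ or $|z|=R$.

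The next, annulus-specific step is to classify holomorphic $\Phi$ that are real on both boundary circles. Setting $f(z):=z^2\phi(z)$ and expanding $f(z)=\sum_{n\in\Z}a_n z^n$ on $A_R$, reality on $|z|=1$ gives $a_{-n}=\overline{a_n}$, while reality on $|z|=R$ gives $a_{-n}=R^{2n}\overline{a_n}$. Combining yields $a_n(R^{2n}-1)=0$ for all $n$, and since $R>1$ we get $a_n=0$ for every $n\neq 0$. Hence $f$ is a real constant $c$ and
\[
\Phi=c\,\frac{dz^2}{z^2}.
\]
In particular $\Phi$ is either identically zero or nowhere vanishing on $A_R$, and only the case $c=0$ remains to be excluded.

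The main (and only real) obstacle is ruling out $c=0$, which I would handle via Gauss-Bonnet. If $c=0$ then $\Sigma=u(A_R)$ is totally umbilic, hence totally geodesic (since $\Sigma$ is minimal), so $K\equiv 0$ on $\Sigma$. On the other hand, the free boundary condition identifies the outward conormal $\nu$ along $\partial\Sigma\subset\partial\B^3$ with the position vector $\gamma$; differentiating $|\gamma|^2\equiv 1$ twice in arclength gives $\langle\ddot\gamma,\gamma\rangle=-1$, so with the standard inward-conormal convention the geodesic curvature equals $\kappa_g=-\langle\ddot\gamma,\nu\rangle=1$ everywhere on $\partial\Sigma$. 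Gauss-Bonnet would then read
\[
0=2\pi\chi(\Sigma)=\int_\Sigma K\,dA+\int_{\partial\Sigma}\kappa_g\,ds=0+|\partial\Sigma|,
\]
forcing $|\partial\Sigma|=0$, an absurdity. Hence $c\neq 0$, so $\Phi$ is nowhere vanishing and $\Sigma$ has no umbilic points. Finally, because $\Sigma$ is minimal the principal curvatures at any point are of the form $\pm\kappa$, so an umbilic point is precisely a point at which the second fundamental form vanishes; the stated non-vanishing of $A$ follows immediately. Note that nothing in this argument requires the immersion to be embedded.
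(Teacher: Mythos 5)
Your proof is correct, and its backbone is the same as the paper's: a Hopf-differential argument in which minimality makes $h(\partial_z,\partial_z)\,dz^2$ holomorphic on the conformal annulus and the free boundary condition makes it totally real on both boundary circles, after which the differential is shown to be a nonzero constant multiple of $dz^2/z^2$, hence nowhere vanishing. You differ in how the two sub-steps are executed. For the classification you expand $z^2\phi(z)$ in a Laurent series and play the two reality conditions against each other ($a_{-n}=\overline{a_n}$ from $|z|=1$, $a_{-n}=R^{2n}\overline{a_n}$ from $|z|=R$) to kill every coefficient with $n\neq 0$; the paper instead performs countably many Schwarz reflections to get a bounded holomorphic function on $\mathbb{C}\setminus\{0\}$ and then invokes the removable singularity theorem and Liouville. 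The two routes are equivalent, but yours is more elementary and has the small bonus of exhibiting the explicit form $\Phi=c\,dz^2/z^2$ (it does use that $\phi$ extends continuously to the closed annulus, which holds since the immersion is smooth up to the boundary). For excluding $c=0$, the paper only remarks that the surface would be totally geodesic, leaving implicit why a totally geodesic free boundary annulus cannot exist; your Gauss--Bonnet argument ($K\equiv 0$, $\kappa_g\equiv 1$ on $\partial\Sigma$ from the free boundary condition, $\chi=0$ forcing $|\partial\Sigma|=0$) fills that gap in a self-contained way that applies directly to the immersed annulus, and your sign computation for $\kappa_g$ is correct.
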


\begin{proof}
Fix a conformal parametrization $u:D(r)\setminus D(1) \to \B^3$ of the minimal annulus where $r>1$ is determined by its conformal type. Consider the complexified second fundamental form $h$ as in the proof of Theorem \ref{T:Nitsche}. We see that by the same argument that $h(\frac{\partial}{\partial z},\frac{\partial}{\partial z})$ is holomorphic inside the annulus and totally real on the two boundary circles. By countably many reflections we have a bounded holomorphic function on $\mathbb{C} \setminus \{0\}$, which by a removable singularity theorem can be extended to all of $\mathbb{C}$. The Liouville theorem then implies that it must be constant. The constant cannot be zero, otherwise the surface is totally geodesic. Therefore, $h$ is non-vanishing everywhere so there are no umbilic points on the surface.
\end{proof}

As in the proof of Lawson's conjecture, the key obstacle to \textbf{Open Question 5} is to exploit the \emph{embeddedness} of the minimal surface. It is unclear whether Brendle's proof of the Lawson conjecture can be adapted to this setting to answer \textbf{Open Question 5}. However, by a Bj\"{o}rling-type uniqueness result for free boundary minimal surfaces \cite[Corollary 3.9]{Kapouleas-Li17}, it suffices to show that one of the boundary components of the minimal annulus is rotationally symmetric.

Other characterizations of the critcal catenoid have been found. The following deep uniqueness result is obtained by Fraser and Schoen \cite[Theorem 1.2]{Fraser-Schoen16}. Their proof uses several ingredients including an analysis of the second variation of energy and area for free boundary surfaces. 

\begin{theorem}[Fraser-Schoen \cite{Fraser-Schoen16}]
\label{T:FS-unique}
Let $\Sigma$ be a free boundary minimal annulus in $\B^n$ immersed by the first Steklov eigenfunctions. Then $n=3$ and $\Sigma$ is congruent to the critical catenoid.
\end{theorem}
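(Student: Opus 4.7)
The plan is to uniformize $\Sigma$ as a flat cylinder and use the first-eigenvalue hypothesis to force a rigid Fourier normal form that matches the critical catenoid. After applying uniformization, write $\Sigma$ conformally as $C_T := [-T,T]\times \mathbb{S}^1$ for a unique modulus $T > 0$, with induced metric $\rho^2(dt^2 + d\theta^2)$ for some conformal factor $\rho > 0$. Since harmonicity is conformally invariant in dimension two, each coordinate $x_i$ is harmonic in the flat metric on $C_T$ and admits a Fourier expansion
\[
x_i(t,\theta) = \alpha_i + \beta_i\, t + \sum_{k\ge 1}\bigl[(a_{i,k}\cosh kt + b_{i,k}\sinh kt)\cos k\theta + (c_{i,k}\cosh kt + d_{i,k}\sinh kt)\sin k\theta\bigr].
\]
Conformality of the parametrization couples $\rho$ to these coefficients via $|u_t|^2 = |u_\theta|^2 = \rho^2$ and $\langle u_t, u_\theta\rangle = 0$, while the free boundary condition becomes $\pm\partial_t x_i|_{t=\pm T} = \rho|_{t=\pm T}\, x_i|_{t=\pm T}$, i.e., a weighted Steklov problem on $C_T$ with boundary weight $\rho$.

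The decisive step is to show that only Fourier modes $k \le 1$ can contribute to first eigenfunctions. Inserting single-mode test functions $v = \phi(t)\cos k\theta$ into the Rayleigh characterization (\ref{E:sigma1}) and integrating in $\theta$ produces the one-dimensional quotient $\int_{-T}^T(\phi'^2 + k^2\phi^2)\,dt$ over $\tilde\rho(T)\phi(T)^2 + \tilde\rho(-T)\phi(-T)^2$, where $\tilde\rho$ is the $\theta$-averaged boundary weight. The $k^2$ term makes higher modes strictly more expensive; combined with Fourier orthogonality (which, when $\rho$ is $\theta$-independent, allows projection of any eigenfunction onto a single mode while retaining the eigenfunction property), this forces $a_{i,k} = b_{i,k} = c_{i,k} = d_{i,k} = 0$ for $k \ge 2$. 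The balancing property $\int_{\partial\Sigma}x_i\,ds = 0$ applied in its weighted form identifies a reflection symmetry about $t = 0$, which kills the $\sinh$ components at mode $k=1$ (up to interchanging the two boundary circles).

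After this reduction, each $x_i$ lies in the three-dimensional span of $\{t,\,\cosh t\cos\theta,\,\cosh t\sin\theta\}$. Up to an orthogonal transformation of $\R^n$, the immersion therefore takes the normal form
\[
u(t,\theta) = (A\cosh t\cos\theta,\, A\cosh t\sin\theta,\, B t,\, 0,\ldots,0),
\]
whose image is contained in a three-dimensional subspace; non-degeneracy of the immersion (the image is two-dimensional but not planar, since $\Sigma$ is an annulus rather than a disk) forces $n = 3$. The free boundary condition $\partial u/\partial\nu = u$ at $t = \pm T$ combined with $|u|^2 = 1$ on $\partial\Sigma$ yields algebraic equations in $A, B, T$ equivalent to the system arising in the proof of Theorem \ref{T:catenoid-exists}, culminating in the transcendental equation $T\tanh T = 1$ that identifies the critical catenoid, and ODE uniqueness then concludes $\Sigma \cong \mathbb{K}$.

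The main obstacle is the Fourier-mode elimination: the boundary weight $\rho$ is itself unknown and coupled to the immersion through conformality, so the spectral argument cannot treat $\rho$ as a prescribed function. A genuine use of the minimal-surface structure together with the \emph{first}-eigenvalue assumption is needed to handle a possibly $\theta$-dependent $\rho$; one natural strategy is to establish the rotational invariance of $\rho$ first (perhaps via a maximum principle or a second-variation argument in the spirit of Theorem \ref{T:FS-unique}'s energy-variation analysis) and only afterward perform the mode-by-mode elimination.
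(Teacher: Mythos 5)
Your proposal does not close, and the gap you flag at the end is not a technical loose end but the entire content of the theorem. The Fourier-mode elimination on the cylinder $C_T$ is only valid if the boundary weight $\rho|_{t=\pm T}$ is $\theta$-independent: with a $\theta$-dependent weight the weighted Steklov problem does not decouple into single modes, projection onto a fixed mode does not preserve the eigenfunction property, and the Rayleigh-quotient comparison ``higher modes are more expensive'' gives no information about which combinations of modes appear in a first eigenfunction. But $\theta$-independence of $\rho$ on $\partial\Sigma$ is precisely the statement that the immersion is $\mathbb{S}^1$-invariant, i.e.\ (after the ODE analysis) that $\Sigma$ is the critical catenoid --- so the argument assumes what it must prove. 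The subsidiary steps are also unjustified as stated: the balancing identity $\int_{\partial\Sigma}x_i\,ds=0$ is $n$ scalar equations and does not ``identify a reflection symmetry'' killing the $\sinh$ components, and the conclusion $n=3$ rests on the reduced normal form, hence again on the missing mode elimination. Note that this survey does not reproduce the proof; it only records that Fraser and Schoen's argument is deep and ``uses several ingredients including an analysis of the second variation of energy and area for free boundary surfaces.''

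That remark points to what is actually needed in place of your spectral shortcut. In Fraser--Schoen's proof the rotational structure is not assumed but extracted: one works with the second variation of energy and area for the free boundary problem, together with control on the multiplicity of $\sigma_1$ (via Courant-type nodal domain arguments for Steklov eigenfunctions, as in the results of \cite{Kuttler-Sigillito69} used elsewhere in this survey), to constrain the eigenspace generated by the coordinate functions and to produce the one-parameter family of rotations acting on $\Sigma$; only after that does the problem reduce to the ODE/transcendental equation you write down, where uniqueness is indeed the easy part (compare the derivation of Theorem \ref{T:catenoid-exists}). If you want to salvage your outline, the honest route is exactly the one you gesture at in your last sentence: first prove $\mathbb{S}^1$-invariance of the immersion from the first-eigenfunction hypothesis by a variational (second variation of energy/area) argument, and only then run the separation of variables. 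As written, the proposal establishes the result only under an unproven symmetry hypothesis, which is a genuine gap rather than an alternative proof.
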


On the other hand, a partial result towards \textbf{Open Question 5} was obtained by McGrath \cite{McGrath18} saying that under additional symmetry assumptions, the critical catenoid is the unique embedded free boundary minimal annulus in $\B^3$.

\begin{theorem}[McGrath \cite{McGrath18}]
Let $\Sigma \subset \B^3$ be an embedded free boundary minimal annulus which is symmetric with respect to the coordinate planes. Then, up to rigid motion in $\B^3$, $\Sigma$ is the critical catenoid.
\end{theorem}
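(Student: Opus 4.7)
The plan is to reduce the theorem to the Fraser--Schoen rigidity result (Theorem~\ref{T:FS-unique}) by using the $\Z_2 \times \Z_2 \times \Z_2$ symmetry to prove that the coordinate functions $x_1, x_2, x_3$ restricted to $\Sigma$ are first Steklov eigenfunctions. By the Fraser--Schoen observation recalled before Proposition~\ref{P:coarse-bound}, each $x_i|_\Sigma$ is a Steklov eigenfunction with eigenvalue $1$, and by the balancing proposition in Section~\ref{S:Prelim} it has zero mean on $\partial\Sigma$. The variational characterisation~\eqref{E:sigma1} thus immediately yields $\sigma_1(\Sigma) \leq 1$, so the entire content of the theorem is the reverse inequality $\sigma_1(\Sigma) \geq 1$. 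Once that is secured, the coordinate functions span a three-dimensional subspace of the first Steklov eigenspace, the inclusion $\Sigma \hookrightarrow \B^3$ is genuinely by first Steklov eigenfunctions, and Theorem~\ref{T:FS-unique} concludes that $\Sigma$ must be congruent to the critical catenoid.

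To set up the lower bound, let $G = \Z_2^3$ denote the group generated by the three coordinate plane reflections. Since $G$ acts on $\Sigma$ by isometries, it acts unitarily on $L^2(\partial\Sigma)$, commuting with the Dirichlet-to-Neumann operator $L$, so the Steklov spectrum decomposes as the disjoint union of the spectra of $L$ restricted to the eight $G$-isotypic subspaces $V_\chi \subset L^2(\partial\Sigma)$, indexed by characters $\chi$ of $G$. Constants occupy the trivial character $\chi_0$ and contribute only the zero eigenvalue there; the character $\chi_i$ that is odd under reflection across $\{x_i = 0\}$ and even under the other two reflections contains $x_i$ and so realises eigenvalue $1$ in $V_{\chi_i}$. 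The task of proving $\sigma_1(\Sigma) \geq 1$ therefore reduces to showing that in each of the seven nontrivial character subspaces $V_\chi$, the smallest Steklov eigenvalue is at least $1$.

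To prove this symmetry-restricted lower bound, I would proceed by a nodal-domain argument adapted to the $G$-action. Any nonzero $\phi \in V_\chi$ with $\chi \neq \chi_0$ must change sign under every reflection $\sigma \in G$ with $\chi(\sigma) = -1$, so the nodal set of $\phi$ on $\Sigma$ must contain the fixed locus of $\sigma$ on $\Sigma$, namely $\Sigma \cap \{x_j = 0\}$ for the corresponding $j$. Since $\Sigma$ is \emph{embedded}, $\partial\Sigma$ is a disjoint union of two smooth simple closed curves on $\Sph^2$, and the analysis of how $G$ permutes these two curves, together with the transverse intersections of each boundary circle with the relevant coordinate planes, gives a precise lower bound on the number of nodal domains of $\phi|_{\partial\Sigma}$. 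Comparing this forced nodal structure with the two-nodal-domain structure of the coordinate functions $x_i$ via a Courant-type bound for the Steklov eigenspaces in $V_\chi$ then rules out any eigenvalue strictly less than $1$ there. This step is the main technical obstacle and is where embeddedness and the full $\Z_2^3$-symmetry are used essentially. Combining $\sigma_1(\Sigma) \geq 1$ with the earlier upper bound gives $\sigma_1(\Sigma) = 1$, and Theorem~\ref{T:FS-unique} then completes the proof.
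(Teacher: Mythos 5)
Your overall skeleton is the same as McGrath's proof as described in the survey: use the reflection symmetries to force $\sigma_1(\Sigma)=1$, so that the inclusion is an embedding by first Steklov eigenfunctions, and then invoke Theorem~\ref{T:FS-unique}. The upper bound $\sigma_1\le 1$ via balancing and the final appeal to Fraser--Schoen are fine. The gaps are in the key step $\sigma_1\ge 1$, and they are genuine.

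First, your reduction to the seven nontrivial characters is not valid as stated: the trivial isotypic component $V_{\chi_0}$ consists of \emph{all} $G$-invariant functions on $\partial\Sigma$, not just constants, so it does not ``contribute only the zero eigenvalue''. A non-constant $G$-invariant eigenfunction orthogonal to constants could a priori have eigenvalue in $(0,1)$, and nothing in your sketch excludes it; this invariant case is a real case in McGrath's argument and requires its own treatment (this is where the annulus topology, the two boundary circles, and how the reflections permute them enter essentially). Second, in the nontrivial characters, Courant-type nodal counting alone cannot rule out eigenvalues below $1$. Consider the character containing $x_1$ itself (odd under reflection across $\{x_1=0\}$, even under the other two): a hypothetical eigenfunction there with eigenvalue $<1$ vanishes on $\Sigma\cap\{x_1=0\}$ and can perfectly well have exactly two nodal domains exchanged by the reflection --- precisely the nodal structure of $x_1$ --- so comparing nodal counts with $x_1$ yields no contradiction. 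What is actually needed is a comparison argument: each nodal domain of such an eigenfunction is connected, avoids $\{x_1=0\}$, hence lies in a component of $\Sigma\cap\{x_1>0\}$ (or $\{x_1<0\}$), on which $x_1$ is a \emph{positive} solution of the mixed Steklov--Dirichlet problem with eigenvalue $1$; domain monotonicity of the first mixed eigenvalue then forces the eigenvalue to be at least $1$. This positivity-plus-monotonicity step is essentially the Choe--Soret symmetrization ingredient the survey cites (used alongside the Kuttler--Sigillito nodal domain theorem), and it is what must replace your ``comparing forced nodal structure via a Courant-type bound''. Until the invariant case is handled and the per-character bound is argued by such a comparison rather than by counting, the proposal does not yet constitute a proof.
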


The proof by McGrath uses in many places the nodal domain theorem for Steklov eigenfunctions \cite{Kuttler-Sigillito69}. Using a symmetrization argument of Choe and Soret \cite{Choe-Soret09}, he proved that certain symmetry assumptions would imply $\sigma_1=1$. Combining this with the uniqueness theorem (Theorem \ref{T:FS-unique}) of Fraser-Schoen, the result follows.

\section{Morse index estimates}
\label{S:Morse}

In this section, we study the second variation of area for a free boundary minimal submanifold in $\B^n$. 

\subsection{Morse index bounds}

Recall that a free boundary minimal submanifold $\Sigma^k$ in $\B^n$ is a critical point to the area functional among the class of submanifolds with boundary lying on $\partial \B^n$. If one considers the second variation of area with respect to normal variations (see \cite{Schoen06} for example), we have the second variation formula
\[ \delta^2 \Sigma (W,W):=\int_\Sigma \big( |D^\perp W|^2 -|A^W|^2 \big) \; da -\int_{\partial \Sigma} |W|^2 \; ds \]
where $A^W(\cdot,\cdot):=\langle A(\cdot,\cdot),W\rangle$ denotes the second fundamental form of $\Sigma$ along $W$ and $W$ is any normal variational field along $\Sigma$. 

\begin{definition}
The \emph{Morse index} of $\Sigma$ is defined to be the maximal dimension of a subspace of sections of the normal bundle $N\Sigma$ on which the second variation $\delta^2 \Sigma$ is negative definite. We will denote the Morse index of $\Sigma$ by $\ind(\Sigma)$.
\end{definition}

Intuitively, the Morse index measures the number of independent deformations which decrease area up to second order. As the simplest example, we compute the Morse index of any totally geodesic $\B^k$ in $\B^n$. In this case, $A^W \equiv 0$ and the normal bundle splits isometrically as $\B^k \times \R^{n-k}$. It is clear that $\delta^2 \B^k$ is negative definite along each direction of $\R^{n-k}$. Therefore, we have the following:

\begin{proposition}
The Morse index of the totally geodesic $\B^k$ inside $\B^n$ is equal to $n-k$.
\end{proposition}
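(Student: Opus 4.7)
The plan is to exploit the triviality of the normal bundle. Since $\B^k$ is totally geodesic in $\B^n$, we have $A\equiv 0$, and the constant orthonormal frame $\{e_{k+1},\ldots,e_n\}\subset \R^n$ for $N\B^k$ is parallel along $\B^k$. Writing any normal section as $W=\sum_{j=k+1}^n f_j e_j$, the second variation decouples into $n-k$ identical scalar quadratic forms:
\[
\delta^2 \B^k(W,W) = \sum_{j=k+1}^n Q(f_j,f_j), \qquad Q(f,f):=\int_{\B^k}|\nabla f|^2\,da - \int_{\partial \B^k} f^2\,ds.
\]
Thus $\ind(\B^k)=(n-k)\cdot \ind(Q)$, and the problem reduces to showing that the scalar form $Q$ on $C^\infty(\B^k)$ has Morse index exactly one.

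The lower bound $\ind(Q)\ge 1$ is immediate from the constant function, $Q(1,1)=-|\partial \B^k|<0$. For the upper bound, I would diagonalize $Q$ via spherical harmonics. Any $f\in H^1(\B^k)$ splits uniquely as $f=\hat{u}+g$, where $\hat{u}$ is the harmonic extension of the trace $u=f|_{\partial \B^k}$ and $g:=f-\hat{u}\in H^1_0(\B^k)$. Integration by parts together with $\Delta \hat{u}=0$ and $g|_{\partial \B^k}=0$ kills the cross term, so $Q(f,f)=Q(\hat{u},\hat{u})+\int_{\B^k}|\nabla g|^2$. Expanding $u=\sum_{\ell\ge 0}c_\ell Y_\ell$ in an $L^2(\partial \B^k)$-orthonormal basis of spherical harmonics of degree $\ell$, the polar formula $\hat{u}(r,\theta)=\sum c_\ell r^\ell Y_\ell(\theta)$ gives the Dirichlet-to-Neumann identity
\[
\int_{\B^k}|\nabla \hat{u}|^2 = \int_{\partial \B^k}\hat{u}\,\partial_\nu \hat{u}\,ds = \sum_{\ell\ge 0}c_\ell^2\,\ell,
\]
whence
\[
Q(\hat{u},\hat{u})=\sum_{\ell\ge 0}c_\ell^2(\ell-1).
\]
Only the $\ell=0$ term is negative; the remaining terms and the interior correction $\int|\nabla g|^2$ are nonnegative.

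To conclude, suppose $U\subseteq C^\infty(\B^k)$ were a subspace of dimension at least two on which $Q$ is negative definite. The linear functional $f\mapsto c_0(f|_{\partial \B^k})$ takes values in a one-dimensional space, so it must vanish on some nonzero $f\in U$; but then the expansion above together with $Q(f,f)=Q(\hat{u},\hat{u})+\int|\nabla g|^2$ forces $Q(f,f)\ge 0$, contradicting negative definiteness. Hence $\ind(Q)=1$ and $\ind(\B^k)=n-k$. No step presents a serious obstacle; the only delicate point is the spectral identification of the Robin-type Jacobi operator on $\B^k$ with the (well-known) Steklov spectrum $\{\sigma_\ell=\ell\}$, which is explicit and classical.
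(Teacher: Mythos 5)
Your proof is correct and follows the same route the paper sketches: the flatness of the normal bundle and $A\equiv 0$ decouple $\delta^2\B^k$ into $n-k$ copies of the scalar Robin form $Q(f,f)=\int_{\B^k}|\nabla f|^2\,da-\int_{\partial\B^k}f^2\,ds$. Your spherical-harmonic/Steklov computation showing $\ind(Q)=1$ is a welcome completion of the upper bound, which the paper simply asserts as clear after noting the splitting $\B^k\times\R^{n-k}$.
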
 

A general Morse index lower bound was proved by Fraser and Schoen in \cite{Fraser-Schoen16}.

\begin{theorem}[Fraser-Schoen \cite{Fraser-Schoen16}]
\label{T:FS-index}
Let $\Sigma^k$ be an immersed free boundary minimal submanifold in $\B^n$ such that $\Sigma$ is not contained in some $\Sigma_0 \times \R$ where $\Sigma_0$ is another $(k-1)$-dimensional immersed free boundary minimal submanifold in $\B^{n-1}$. Then, $\ind(\Sigma) \geq n$.
\end{theorem}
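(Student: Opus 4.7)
The plan is to exhibit an $n$-dimensional subspace of $\Gamma(N\Sigma)$ on which the index form $\delta^2 \Sigma$ is strictly negative. For each $a \in \R^n$, take the test section $W_a := a^\perp$, the pointwise normal projection of the constant ambient vector $a$.

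I would first derive a boundary-integral expression for $\delta^2 \Sigma(W_a, W_a)$. Since $a$ is parallel in $\R^n$, $D_X a = 0$ decomposes via Gauss--Weingarten into $D^\perp_X a^\perp = -A(X, a^T)$ and $\nabla^\Sigma_X a^T = S_{a^\perp}(X)$; combining these with Codazzi and $\vec H \equiv 0$ yields the normal Jacobi equation $\Delta^\perp a^\perp + \sum_{i,j}\langle A(e_i,e_j), a^\perp\rangle A(e_i, e_j) = 0$. Integration by parts then collapses the bulk term in the index form, and the identity $\partial_\nu|a^\perp|^2 = -2\langle A(\nu, a^T), a^\perp\rangle$ rewrites the remainder as the pure boundary quadratic form
\[
Q(a) \;:=\; \delta^2 \Sigma(W_a, W_a) \;=\; \int_{\partial \Sigma}\!\Bigl[\tfrac{1}{2}\partial_\nu|a^\perp|^2 - |a^\perp|^2\Bigr]\,ds.
\]
Summing $Q$ against an orthonormal basis $\{f_a\}$ of $\R^n$, the cross-term vanishes pointwise because $\sum_a (f_a)_i (f_a)_\alpha = \langle e_i, n_\alpha\rangle = 0$, giving the trace bound $\operatorname{tr}(Q) = -(n-k)|\partial \Sigma| < 0$.

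I would then analyze the map $a \mapsto W_a$. Its kernel consists of $a$ everywhere tangent to $\Sigma$; for such $a$, the identities above force $a$ to be a parallel tangent vector field on $\Sigma$ with $A(\cdot, a) \equiv 0$, and a standard rigidity argument splits $\Sigma$ globally as $\Sigma_0 \times \R a$ for a $(k-1)$-dimensional free boundary minimal submanifold $\Sigma_0 \subset \B^{n-1}$ --- precisely the configuration excluded by hypothesis. Under the hypothesis, the family $\{W_a\}_{a \in \R^n}$ therefore spans a full $n$-dimensional subspace of $\Gamma(N\Sigma)$.

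The main obstacle I expect is to upgrade the negative-trace fact into strict negative-definiteness of $Q$ on all of $\R^n$: the trace alone only produces one negative direction. The natural rigidity claim is that any degeneracy $Q(a_0) \geq 0$ with $W_{a_0} \not\equiv 0$ produces an extra infinitesimal symmetry of $\Sigma$ that integrates to the excluded product splitting. A cleaner alternative, which may sidestep some of this difficulty, is to replace $W_a$ by the M\"obius vector field $Y_b(x) = 2\langle b, x\rangle x - (1+|x|^2)b$, which is tangent to $\partial \B^n$ and conformal Killing on $\B^n$ with conformal factor $\mu_b = 2\langle b, x\rangle$; since the flow satisfies $\operatorname{Area}(\phi_t^b(\Sigma)) = \int_\Sigma \lambda_t^k\,dA$ with $\lambda_t = 1 + t\mu_b + O(t^2)$, a direct expansion yields
\[
\delta^2 \Sigma(Y_b, Y_b) \;=\; 4k(k+1)\int_\Sigma\langle b, x\rangle^2\,dA \;-\; 2k|b|^2\int_\Sigma(1+|x|^2)\,dA,
\]
and degeneracy in $b$ is again equivalent to $\Sigma$ being preserved by the M\"obius one-parameter subgroup, hence to the excluded product decomposition; either route then yields $\ind(\Sigma) \geq n$.
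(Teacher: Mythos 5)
Your choice of test sections $W_a=a^\perp$ and your boundary expression $Q(a)=\int_{\partial\Sigma}\bigl[\tfrac12\partial_\nu|a^\perp|^2-|a^\perp|^2\bigr]ds$ coincide with the paper's starting point, and your identification of the kernel of $a\mapsto W_a$ with the excluded cylindrical case is fine. The genuine gap is exactly where you flag it, and neither of your two routes closes it. A negative trace of $Q$ on $\R^n$ yields only one negative direction, while the theorem needs $n$; and your proposed fix --- that any degeneracy $Q(a_0)\geq 0$ with $W_{a_0}\not\equiv 0$ forces an infinitesimal symmetry integrating to the excluded splitting --- is asserted rather than proved, and it is not the mechanism that actually works. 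What the paper does instead is evaluate $Q(a)$ exactly: in a frame with $e_k=\nu=x$ along $\partial\Sigma$ one has $h^\alpha_{ik}=0$ for $i<k$, hence $D_\nu\nu_\alpha=-h^\alpha_{kk}x$; decomposing $v=\langle v,x\rangle x+v_1+v^\perp$, using minimality and the divergence theorem on $\partial\Sigma$ gives $\delta^2\Sigma(v^\perp,v^\perp)=\int_{\partial\Sigma}\bigl(-1+k\langle v,x\rangle^2\bigr)ds$, and then applying the first-variation identity $\delta\Sigma(V)=0$ to the vector field $V=x-k\langle v,x\rangle v$ produces
\[
\delta^2\Sigma(v^\perp,v^\perp)=-\int_{\partial\Sigma}\langle V,x\rangle\,ds=-\int_\Sigma \Div_\Sigma V\,da=-k\int_\Sigma |v^\perp|^2\,da,
\]
which is strictly negative whenever $v^\perp\not\equiv 0$. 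This makes the index form negative definite on the full $n$-dimensional space $\{a^\perp:a\in\R^n\}$ with no separate degeneracy analysis; without some such exact evaluation your argument stalls at index $\geq 1$.

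The ``cleaner alternative'' does not repair this. Your formula for the second derivative of area along the flow of $Y_b$ is correct (it follows from the cocycle relation for the conformal factor), but its nonpositivity in $b$ is precisely the statement that the volume of a free boundary minimal submanifold decreases to second order under conformal transformations of $\B^n$ --- which, as discussed in Section \ref{S:Area} of this paper, is \emph{not} known: only the second-order decrease of the boundary volume is proved by Fraser--Schoen, and even for the critical catenoid the interior statement has only been checked numerically (cf.\ \textbf{Open Question 8}). So the M\"obius-field route replaces the missing step by an open problem, whereas the translation fields admit the closed-form computation above.
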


\begin{proof}
Let $v \in \R^n$ be a fixed unit vector. Let $v^\perp$ denote the normal component of $v$ along $\Sigma$. The theorem is proved once we show that
\[ \delta^2 \Sigma(v^\perp,v^\perp)= - k \int_{\Sigma} |v^\perp|^2 \; da.\]
First of all, since $v^\perp$ is a Jacobi field (generated by translation in the $v$-direction), integrating by part we have
\[ \delta^2 \Sigma(v^\perp,v^\perp)=\int_{\partial \Sigma} \big( \langle v^\perp, D_\nu v^\perp \rangle -|v^\perp|^2 \big) \; ds. \]
The next step is to compute the first term in a local frame. Fix $p \in \partial \Sigma$ and a local orthonormal frame $e_1,\cdots,e_k$ of $T\Sigma$ near $p$, where $e_k=\nu=x$ along $\partial \Sigma$. Also, we fix a local orthonormal frame $\nu_1,\cdots, \nu_{n-k}$ of $N\Sigma$ near $p$ such that $D^\perp \nu_\alpha(p)=0$. Let $h_{ij}^\alpha:=\langle A(e_i,e_j),\nu_\alpha\rangle$ be the second fundamental form in this basis. Note that for any $i <k$, we have $h_{ik}^\alpha=\langle D_{e_i} x, \nu_\alpha \rangle =\langle e_i,\nu_\alpha\rangle=0$. Therefore, 
\[ D_\nu \nu_\alpha=\sum_{i=1}^k \langle D_\nu \nu_\alpha,e_i \rangle e_i = -h_{kk}^\alpha x.\]
Using this and the minimality of $\Sigma$, we have
\begin{eqnarray*}
\langle v^\perp, D_\nu v^\perp \rangle &=& \langle v^\perp, D_\nu \left( \sum_{\alpha=1}^{n-k} \langle v,\nu_\alpha \rangle \nu_\alpha \right) \rangle \\
&=& - \langle v^\perp, \sum_{\alpha=1}^{n-k} \left(  h_{kk}^\alpha \langle v,x \rangle \nu_\alpha +h_{kk}^\alpha \langle v,\nu_\alpha \rangle x   \right) \rangle \\
&=&\langle v,x \rangle \left(\sum_{\alpha=1}^{n-k} \sum_{i=1}^{k-1} h_{ii}^\alpha  \langle \nu_\alpha,v \rangle \right).
\end{eqnarray*}
Consider the orthogonal decomposition
\[ v = \langle v,x \rangle x +v_1 + v^\perp \]
where $v_1$ is the component of $v$ tangent to $\partial \Sigma$, we compute
\[ -(k-1) \langle v,x \rangle = \Div_{\partial \Sigma}(v_1 + v^\perp)= \Div_{\partial \Sigma} (v_1) - \sum_{\alpha=1}^{n-k} \sum_{i=1}^{k-1} h_{ii}^\alpha  \langle \nu_\alpha,v \rangle. \]
Putting all these together, we obtain
\[ \delta^2 \Sigma(v^\perp,v^\perp)=\int_{\partial \Sigma} \Big( \langle v,x \rangle \Div_{\partial \Sigma} (v_1) + (k-1) \langle v,x \rangle^2   -|v^\perp|^2 \Big) \; ds. \]
Since $v_1$ is tangent to $\partial \Sigma$, we can apply divergence theorem in the first term to get
\begin{eqnarray*}
\delta^2 \Sigma (v^\perp,v^\perp) &=& \int_{\partial \Sigma} \Big( - |v_1|^2+ (k-1) \langle v,x \rangle^2   -|v^\perp|^2 \Big) \; ds. \\
&=& \int_{\partial \Sigma} \Big( -1 + k \langle v,x \rangle^2 \Big) \; ds.
\end{eqnarray*}
Finally, consider the vector field $V=x - k \langle v,x \rangle v$ and using that $\delta \Sigma(V)=0$,  we have 
\[ \delta^2 \Sigma (v^\perp,v^\perp) = -\int_{\partial \Sigma} \langle V, x \rangle \; ds =-\int_\Sigma \Div_\Sigma V \; da=-k \int_\Sigma |v^\perp|^2 \; da. \]
\end{proof}

We now focus on the hypersurface case (i.e. $k=n-1$). If $\Sigma$ is two-sided, we can fix a global unit normal $N$ on $\Sigma$ and thus any normal variation field can be written as $W=u N$ for some $u \in C^\infty(\Sigma)$. Integrating by parts, we can reduce the stability of the second variation form $\delta^2 \Sigma$ to the following eigenvalue problem:
\[ \left\{ \begin{array}{cl}
Ju  =  \lambda u & \text{in $\Sigma$,} \\
\frac{\partial u}{\partial \nu} +u = 0 &\text{on $\partial \Sigma$,}
\end{array} \right. \] 
where $J:=\Delta_\Sigma -|A_\Sigma|^2$ denotes the Jacobi operator of $\Sigma$. By studying the relationship between the eigenvalues of the Jacobi operator $J$ and the eigenvalues of the Laplacian on $1$-forms, Sargent \cite{Sargent17} and Ambrozio-Carlotto-Sharp \cite{Ambrozio-Carlotto-Sharp18} independently obtained new Morse index bounds in terms of the dimension $n$ and the topology of the minimal hypersurface $\Sigma$. In their works, general (mean)-convex Euclidean domains are considered but for simplicity we only state their results for $\B^3$.

\begin{theorem}[Sargent \cite{Sargent17}, Ambrozio-Carlotto-Sharp \cite{Ambrozio-Carlotto-Sharp18}]
Let $\Sigma$ be an orientable immersed free boundary minimal surface in $\B^3$ with genus $g$ and $k$ boundary components. Then 
\[ \ind(\Sigma) \geq  \lfloor  \frac{2g+k+1}{3} \rfloor .\]
\end{theorem}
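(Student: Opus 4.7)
The natural strategy is to construct a large supply of test functions for the index quadratic form from harmonic $1$-forms on $\Sigma$ and then to conclude by a trace computation and a pigeonhole/dimension count, in the spirit of the closed-manifold arguments of Ros and Savo.

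\textbf{Step 1 (Hodge theory).} Since $\Sigma$ is an orientable compact surface of genus $g$ with $k$ boundary components, its first Betti number equals $b_1(\Sigma)=2g+k-1$. By Hodge theory for manifolds with boundary, the space $\Hcal$ of harmonic $1$-forms on $\Sigma$ with the absolute (Neumann) boundary condition has dimension $2g+k-1$.

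\textbf{Step 2 (Test functions).} Since $\Sigma\subset\B^3$ is orientable it is two-sided; fix a global unit normal $N$. For each $\omega\in\Hcal$ and each constant vector $a\in\R^3$, set
\[
u_{\omega,a}\ :=\ \lan\omega^\sharp,a\ran\ =\ \omega(a^T),
\]
where $\omega^\sharp\in T\Sigma$ is the metric dual of $\omega$ and $a^T=a-\lan a,N\ran N$. This defines a linear map $\Phi:\Hcal\otimes\R^3\to C^\infty(\Sigma)$ into the space of scalar normal variations.

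\textbf{Step 3 (Trace computation).} The index quadratic form is
\[
Q(u)\ =\ \int_\Sigma(|\nabla u|^2-|A|^2u^2)\,da\ -\ \int_{\partial\Sigma}u^2\,ds.
\]
Fix an orthonormal basis $\{e_1,e_2,e_3\}$ of $\R^3$. I would combine the telescoping identity $\sum_{a=1}^3 u_{\omega,e_a}^2=|\omega|^2$ with the Bochner--Weitzenb\"ock formula $\tfrac12\Delta|\omega|^2=|\nabla\omega|^2+\mathrm{Ric}_\Sigma(\omega,\omega)$ for the harmonic form $\omega$, the Gauss equation for $\Sigma\hookrightarrow\R^3$ (so that minimality gives $K_\Sigma=-\tfrac12|A|^2$ and hence $\mathrm{Ric}_\Sigma(\omega,\omega)=-\tfrac12|A|^2|\omega|^2$), and the free boundary condition $\nu=x$ on $\partial\Sigma$ (using that $\partial\B^3$ is totally umbilical with principal curvatures equal to $1$) to deduce the clean trace inequality
\[
\sum_{a=1}^3 Q(u_{\omega,e_a})\ \leq\ 0.
\]
The boundary term $\int_{\partial\Sigma}u_{\omega,e_a}^2\,ds$ is absorbed into the bulk after integration by parts, precisely because the Robin boundary condition $\partial x_i/\partial\nu=x_i$ from Theorem \ref{T:definitions} yields the right sign in the geodesic--curvature contribution on $\partial\Sigma$.

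\textbf{Step 4 (Dimension count and conclusion).} For each $\omega\in\Hcal$ the $3$-dimensional subspace $V_\omega=\Phi(\omega\otimes\R^3)\subset C^\infty(\Sigma)$ carries non-positive trace of $Q$, hence contains at least one non-positive direction. After verifying that $\Phi$ is sufficiently injective (its kernel consists of those $\omega$ whose dual $\omega^\sharp$ is orthogonal to the tangential projection of every constant vector, which generically forces $\omega=0$), a linear-algebra argument on the family $\{V_\omega\}_{\omega\in\Hcal}$ produces a subspace of dimension at least $\lceil(2g+k-1)/3\rceil=\lfloor(2g+k+1)/3\rfloor$ on which $Q$ is non-positive. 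A standard perturbation of the test functions removes the nullity contribution and yields the strict bound $\ind(\Sigma)\geq\lfloor(2g+k+1)/3\rfloor$.

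\textbf{Main obstacle.} The technical heart of the argument is Step 3, where every term produced by the Bochner--Weitzenb\"ock formula and the Gauss--Codazzi equations must be matched against the boundary integral using the free boundary condition; the umbilicity of $\partial\B^3$ with mean curvature $1$ is precisely what allows the boundary contribution to be absorbed. A secondary difficulty is sharpening the naive pigeonhole in Step 4 from $\lfloor(2g+k-1)/3\rfloor$ up to $\lfloor(2g+k+1)/3\rfloor$, which requires extracting an additional non-positive direction from the interaction between the $V_\omega$'s, for instance by noting that the position vector provides a canonical augmentation coming from the rotational Killing fields of $\B^3$.
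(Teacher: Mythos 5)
First, a point of context: the survey does not actually prove this theorem; it only cites Sargent and Ambrozio--Carlotto--Sharp and records that their proofs compare the spectrum of the Jacobi operator with that of the Laplacian on $1$-forms. Your strategy is the same family of ideas, but as written it has two genuine gaps. The more serious one is Step 4: knowing that each $3$-dimensional space $V_\omega$ contains \emph{one} direction where $Q\le 0$ gives essentially nothing, since those directions can coincide for different $\omega$ and non-positive directions do not assemble into a subspace on which $Q$ is non-positive, let alone negative. The correct linear algebra runs the other way: set $I=\ind(\Sigma)$ and let $E$ be the $I$-dimensional span of the negative eigenfunctions of the Robin eigenvalue problem for the Jacobi operator. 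The map $\Hcal\to\R^{3I}$ sending $\omega$ to the pairings of $u_{\omega,e_1},u_{\omega,e_2},u_{\omega,e_3}$ against a basis of $E$ must be injective: a nonzero $\omega$ in its kernel would have all three test functions lying where $Q\ge 0$, contradicting the \emph{strictly} negative trace $\sum_a Q(u_{\omega,e_a})<0$ (and one must rule out that all three functions vanish, which is immediate since the tangential parts of $e_1,e_2,e_3$ span every tangent plane). Hence $3I\ge \dim\Hcal=2g+k-1$, and integrality alone gives $I\ge\lceil(2g+k-1)/3\rceil=\lfloor(2g+k+1)/3\rfloor$. So the ``sharpening'' you worry about in your final remark is a misdiagnosis: no extra non-positive direction is needed, and the augmentation you suggest via the rotational Killing fields of $\B^3$ would not supply one anyway, since rotations generate Jacobi (null) directions, not negative ones. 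Relatedly, you need strict negativity of the trace, not $\le 0$; ``a standard perturbation removes the nullity contribution'' is not a substitute for this.

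The second gap is that Step 3 is asserted rather than carried out, and it is precisely where the choice of test functions and of boundary conditions on the harmonic forms can make or break the signs. In the cited proofs the test functions are, in your notation, the components of $\omega^\sharp\times N$, i.e.\ contractions of $\omega\wedge N$ with constant $2$-forms (equivalently $(\star\omega)(a^T)$), and the boundary condition imposed on $\omega$ is chosen so that, after the Bochner formula, the Gauss equation and integration by parts, the boundary terms acquire the sign dictated by the convexity of $\partial\B^3$ and absorb $-\int_{\partial\Sigma}u^2\,ds$. With your choice $u_{\omega,a}=\omega(a^T)$ and Neumann (absolute) harmonic forms, these boundary terms must be recomputed and may force you to pass to $\star\omega$ instead; this is harmless for the count, since $\star$ exchanges absolute and relative conditions and both harmonic spaces have dimension $2g+k-1$, but until the trace inequality is actually established in strict form for a concrete admissible family, the proof is incomplete — the rest is bookkeeping once it is.
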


Although the lower bound above is not sharp for $\B^3$, for example, it only gives $\ind(\mathbb{D}) \geq 0$ and $\ind(\mathbb{K}) \geq 1$. However it has an important implication that the Morse index cannot stay bounded when \emph{either} the genus or the number of boundary components goes to infinity. This says, in particular, that the examples $\Sigma^{FS}_\ell$, $\Sigma_g^{KL}$, $\Sigma_n^{FPZ}$, $\tilde{\Sigma}_n^{FPZ}$, $\Sigma_g^{KW}$ and $\Sigma_g^{K}$ constructed by various methods in Section \ref{S:Existence} all have unbounded Morse index when either the genus or the number of boundary components goes to infinity. See the recent work of Carlotto and Franz \cite{CF} for some related results.

\subsection{Low index solutions}

Precise Morse index control, when combined with general existence theory, often provides significant insight into geometric problems. Classically this idea has been applied to the study of geodesics which gives the celebrated theorems of Bonnet-Myers, Synge and Frankel. For minimal surfaces in higher codimensions, Micallef and Moore \cite{Micallef-Moore88} prove the topological sphere theorem for closed simply connected manifolds with positive isotropic curvature. In a series of beautiful work \cite{Fraser00} \cite{Fraser02} \cite{Fraser07}, Fraser extended many of the ideas to the free boundary setting, for example to study the topology of closed two-convex hypersurfaces in $\R^n$. We encourage the interested readers to consult the excellent survey \cite{Fraser11} on this subject. 

In the past few years, there has been rapid progress in the theory of closed minimal surfaces in $\Sph^3$. In 2012, Marques and Neves \cite{Marques-Neves14} solved the longstanding Willmore conjecture which asserts that the Willmore energy is uniquely (up to conformal diffeormorphisms) minimized by the Clifford torus among all closed surfaces of genus one. Their proof is a beautiful application of the Almgren-Pitts min-max theory. The arguments in \cite{Marques-Neves14} also used crucially a result of Urbano \cite{Urbano90} saying that the only non-totally geodesic closed minimal surface $\Sigma$ in $\Sph^3$ with $\ind(\Sigma) \leq 5$ is (up to isometries of $\Sph^3$) the Clifford torus. Note that the totally geodesic spheres in $\mathbb{S}^3$ are of index one.

In light of these positive results, there has been a lot of recent interest to compute the Morse index of some explicit examples of free boundary minimal surfaces in $\B^3$. It is clear that the equatorial disks in $\B^3$ have Morse index one. The Morse index of the critical catenoid in $\B^3$ was computed independently by Tran \cite{Tran16}, Smith and Zhou \cite{Smith-Zhou19}, and Devyver \cite{Devyver19}.

\begin{theorem}[Tran \cite{Tran16}, Smith-Zhou \cite{Smith-Zhou19}, Devyver \cite{Devyver19}]
The Morse index of the critical catenoid $\mathbb{K}$ in $\B^3$ is equal to $4$.
\end{theorem}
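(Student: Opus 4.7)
The plan is to exploit the rotational and reflective symmetries of $\mathbb{K}$ to reduce the Jacobi eigenvalue problem with Robin data to a countable family of one-dimensional Sturm--Liouville problems, and then count negative eigenvalues mode by mode. Parametrize $\mathbb{K}$ conformally by $\varphi(s,\theta)=\alpha(\cosh s\cos\theta,\cosh s\sin\theta,s)$ with $(s,\theta)\in[-\beta,\beta]\times\mathbb{S}^1$, where $\beta=\coth\beta$ and $\alpha=(\beta^2+\cosh^2\beta)^{-1/2}$. Then $g=\alpha^2\cosh^2 s\,(ds^2+d\theta^2)$, $|A|^2=2/(\alpha^2\cosh^4 s)$, and the index form becomes
\[
Q(u,u)=\int_{-\beta}^{\beta}\!\!\int_0^{2\pi}\!\bigl(u_s^2+u_\theta^2-\tfrac{2}{\cosh^2 s}u^2\bigr)ds\,d\theta-\alpha\cosh\beta\int_0^{2\pi}\!\bigl(u(\beta,\theta)^2+u(-\beta,\theta)^2\bigr)d\theta.
\]
A $\theta$-Fourier decomposition diagonalizes $Q$ as $\sum_{n\geq 0}Q_n[f_n]$, with angular multiplicity two for each $n\geq 1$ and
\[
Q_n[f]=\int_{-\beta}^\beta\!\bigl(f'^2+(n^2-2\,\mathrm{sech}^2 s)f^2\bigr)ds-\alpha\cosh\beta\bigl(f(\beta)^2+f(-\beta)^2\bigr).
\]

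The pivotal algebraic input is the identity $\alpha\cosh\beta=\tanh\beta=1/\beta$, which follows from $\beta=\coth\beta$ via $\cosh^2\beta=\beta^2/(\beta^2-1)$ and hence $\beta^2+\cosh^2\beta=\beta^2\cosh^2\beta$. Combined with the elementary rearrangement $f'^2=(f'-\tanh s\cdot f)^2+(\tanh s\cdot f^2)'-f^2$ (using $\mathrm{sech}^2+\tanh^2=1$), it lets the integration-by-parts boundary term $\tanh(\pm\beta)f(\pm\beta)^2$ exactly cancel the free-boundary term $-\alpha\cosh\beta(f(\beta)^2+f(-\beta)^2)$, yielding the clean rewriting
\[
Q_n[f]=\int_{-\beta}^\beta(f'-\tanh s\cdot f)^2\,ds+\int_{-\beta}^\beta(n^2-2\,\mathrm{sech}^2 s-1)f^2\,ds.
\]
For $n\geq 2$ the interior coefficient satisfies $n^2-2\,\mathrm{sech}^2 s-1\geq 1>0$, so $Q_n\geq 0$ and these modes contribute no negative eigenvalue.

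For $n=0$ and $n=1$, split each $Q_n$ further by parity in $s$, producing Neumann--Robin (even) and Dirichlet--Robin (odd) problems on $[0,\beta]$ for the P\"oschl--Teller operator $L_n:=-\partial_s^2+n^2-2\,\mathrm{sech}^2 s$. Using $\alpha\cosh\beta=1/\beta$ one verifies directly that the odd kernel element $\psi(s)=\tfrac{s}{2\cosh s}+\tfrac{\sinh s}{2}$ of $L_1$ (the radial profile of the rotation Jacobi field about an axis perpendicular to the catenoid axis) satisfies the Robin condition and is strictly positive on $(0,\beta]$, making it the ground state of the $n=1$ Dirichlet--Robin problem with eigenvalue zero; this parity therefore contributes $0$. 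For each of the remaining three 1D problems I would exhibit an explicit test function, namely $\mathrm{sech}\,s$ for the $n=0,1$ even parts and $\tanh s$ for the $n=0$ odd part, each of which evaluates to a negative rational expression in $\beta$ once $\tanh\beta=1/\beta$ is substituted (for instance $Q_1^{\mathrm{even}}[\mathrm{sech}\,s]$ works out to $-4(\beta^2-1)/\beta^3<0$), producing at least one negative eigenvalue per parity. With the multiplicity two from the $\cos\theta/\sin\theta$ pair at $n=1$, the target total is $1+1+2+0=4$.

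The principal obstacle is the matching \emph{upper bounds} on the low-mode negative eigenvalue counts: the weighted rewriting only rules out negative eigenvalues for $n\geq 2$, and a priori any of the three contributing 1D problems could harbor more than one. The cleanest way to close this gap is via the Pr\"ufer transformation $f=\rho\sin\phi$, $f'=\rho\cos\phi$, which converts the Robin condition at $s=\beta$ into a prescribed terminal phase $\phi(\beta)$ depending monotonically on the spectral parameter. Anchoring the analysis at $\lambda=0$ using the explicit interior solutions $\{\tanh s,\,1-s\tanh s\}$ (for $n=0$) and $\{\mathrm{sech}\,s,\,\psi\}$ (for $n=1$), computing their Robin defects, and invoking Sturm's comparison theorem then pins down precisely one negative eigenvalue in each contributing parity. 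Collecting everything yields $\operatorname{ind}(\mathbb{K})=4$.
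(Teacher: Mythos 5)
Your proposal is correct and takes essentially the same route as the proofs of Tran, Smith--Zhou and Devyver that this survey cites without reproducing: separation of variables in $\theta$ reduces the second variation with its Robin boundary term to one-dimensional problems with Robin conditions, the modes $n\geq 2$ are eliminated (your ground-state rewriting using $\alpha\cosh\beta=\tanh\beta=1/\beta$ is a clean way to do this), and the modes $n=0,1$ are settled by ODE analysis anchored at the explicit zero-energy solutions $\tanh s$, $1-s\tanh s$, $\mathrm{sech}\,s$ and $\psi$ coming from translations, dilations and rotations, yielding the count $1+1+2+0=4$. The only points left to make fully explicit are routine: the negativity of your $n=0$ odd test value $2(\beta^2-2)/\beta^3$ requires the easy estimate $\beta<\sqrt{2}$, and the exactly-one-negative-eigenvalue upper bounds in the three contributing parities do follow from the Pr\"ufer/Sturm comparison you sketch, since the $\lambda=0$ solutions have no interior zeros on $(0,\beta)$ and their Robin defects at $s=\beta$ have the required sign.
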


All the proofs use the separation of variables argument to reduce the problem to study an ODE with Robin boundary conditions. Nonetheless, the analysis is much more complicated than computing the Morse index of the Clifford torus. While it is known that the conformal vector fields in $\Sph^3$ are universally area decreasing for all closed minimal surfaces in $\Sph^3$, it is not known whether the same is true for the conformal vector fields in $\B^3$ in the case of free boundary minimal surfaces. We will return to this point again in Section \ref{S:Area}. In view of Urbano's result, it is natural to ask the following question:

\begin{question}
Is the critical catenoid the only (up to rigid motions) immersed free boundary minimal surface in $\B^3$ with Morse index $4$?
\end{question}

Only partial results have been obtained by Tran \cite{Tran16} and Devyver \cite{Devyver19}. They showed that the answer is yes if we assume additionally that the free boundary minimal surfaces is topologically an annulus. Essentially it would force $\sigma_1=1$ and thus the assertion follows from the uniqueness result of Fraser and Schoen in Theorem \ref{T:FS-unique}.

Finally, we would like to mention that Smith, Stern, Tran and Zhou \cite{Smith-Stern-Tran-Zhou17} have studied the Morse index of the higher dimensional critical catenoid $\mathbb{K}^{n-1}$ in $\B^n$ for $n \geq 4$. The exact Morse index was computed numerically up to $n=101$ and they proved the following asymptotic estimate on the Morse index as $n \to \infty$:
\[ \lim_{n \to \infty} \frac{\log \big(\ind(\mathbb{K}^{n-1}) \big)}{\sqrt{n-1} \log \sqrt{n-1}} =1.\]
An interesting feature of this asymptotics is that for $n$ large, the Killing fields associated to the infinitesimal translations and dilations of $\R^n$ (which has total dimension $n+1$) do not account for all the negative eigenvalues of the index form $\delta^2 \Sigma$. This is in contrast with the situation for the complete higher dimensional catenoids in $\R^n$ (which has Morse index one \cite{Tam-Zhou09}) and the Clifford minimal hypersurfaces in $\Sph^n$ (which has Morse index equal to $n+2$ according to Perdomo \cite{Perdomo01}).

\section{Steklov eigenvalue estimates and compactness theorems}
\label{S:Compactness}

In this section, we describe an estimate for the first Steklov eigenvalue of embedded free boundary minimal hypersurfaces in $\B^n$. We then discuss a few of its consequences including a smooth compactness result for embedded free boundary minimal surfaces in $\B^3$. 

We will need a variant of Reilly's formula \cite[Lemma 2.6]{Fraser-Li14} which applies to domains with piecewise smooth boundary.

\begin{proposition}[Reilly's formula]
\label{P:Reilly}
Let $\Omega \subset \R^n$ be a bounded domain with piecewise smooth boundary $\partial \Omega=\cup_{i=1}^k \Sigma_i$. Denote $S=\cup_{i=1}^k \partial \Sigma_i$. Suppose $f \in C^0(\overline{\Omega})$ is a smooth harmonic function away from $S$ and has uniformly bounded $C^3$-norm on $\Omega \setminus S$. Then we have the following inequality
\[ 0 \geq  \sum_{i=1}^k \int_{\Sigma_i} \left[ (-\Delta^{\Sigma_i} f+ H^{\Sigma_i} \frac{\partial f}{\partial n_i}) \frac{\partial f}{\partial n_i} + \langle \nabla^{\Sigma_i} f, \nabla^{\Sigma_i} \frac{\partial f}{\partial n_i} \rangle + A^{\Sigma_i}(\nabla^{\Sigma_i} f, \nabla^{\Sigma_i} f ) \right] \]
where $\Delta^{\Sigma_i}$ and $\nabla^{\Sigma_i}$ are the intrinsic Laplacian and gradient operators on each $\Sigma_i$; $n_i$ is the inward unit normal of $\Sigma_i$ with respect to $\Omega$; $H^{\Sigma_i}$ and $A^{\Sigma_i}$ are respectively the mean curvature and second fundamental form of $\Sigma_i$ in $\Omega$ with respect to $n_i$.
\end{proposition}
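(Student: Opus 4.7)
The plan is to start from the Bochner identity for a harmonic function in flat space and convert its integrated form into a boundary integral via the divergence theorem, then rewrite the boundary integrand in the desired form using only intrinsic data on each face $\Sigma_i$.

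\emph{Step 1 (Bochner).} Since the ambient space is $\R^n$ and $f$ is harmonic on $\Omega\setminus S$, the Bochner identity reduces to
\[
\tfrac12 \Delta |\nabla f|^2 \;=\; |\nabla^2 f|^2 \;\geq\; 0 \qquad \text{on } \Omega\setminus S.
\]
Integrating over $\Omega$ and applying the divergence theorem on the piecewise smooth boundary $\partial\Omega=\cup_i \Sigma_i$ (with outward normal $-n_i$ on $\Sigma_i$), I obtain
\[
0\;\leq\;\int_\Omega |\nabla^2 f|^2\, dV \;=\; -\tfrac12\sum_i\int_{\Sigma_i}\frac{\partial}{\partial n_i}|\nabla f|^2\, dA.
\]
To justify the divergence theorem in the presence of the corner set $S$, I would exhaust $\Omega$ by $\Omega_\varepsilon := \Omega \setminus N_\varepsilon(S)$ where $N_\varepsilon(S)$ is an $\varepsilon$-tubular neighborhood of $S$. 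Because $S$ has codimension two and the integrand $\langle \nabla|\nabla f|^2, \cdot\rangle$ is uniformly bounded (thanks to the $C^3$ bound on $f$), the contribution from $\partial N_\varepsilon(S)\cap \Omega$ is $O(\varepsilon)$ and vanishes as $\varepsilon\to 0$.

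\emph{Step 2 (Pointwise identification on each $\Sigma_i$).} Fix a face $\Sigma_i$ and write $z:=\partial f/\partial n_i$, decomposing $\nabla f=\nabla^{\Sigma_i} f + z\, n_i$ on $\Sigma_i$. Then
\[
\tfrac12\frac{\partial}{\partial n_i}|\nabla f|^2 \;=\; \nabla^2 f(n_i,\nabla f) \;=\; \nabla^2 f(n_i,\nabla^{\Sigma_i} f) + z\,\nabla^2 f(n_i,n_i).
\]
For the first term, using the symmetry of $\nabla^2 f$ and the Weingarten relation $D_X n_i = -W^{\Sigma_i}(X)$ (with $\langle W^{\Sigma_i}(X),Y\rangle = A^{\Sigma_i}(X,Y)$) on any tangent $X$,
\[
\nabla^2 f(n_i,X) \;=\; X(z) - \langle \nabla f, D_X n_i\rangle \;=\; \langle \nabla^{\Sigma_i} z, X\rangle + A^{\Sigma_i}(\nabla^{\Sigma_i} f, X).
\]
For the second term, the standard decomposition of the Laplacian along the hypersurface $\Sigma_i$ reads $\Delta f = \Delta^{\Sigma_i} f - H^{\Sigma_i} z + \nabla^2 f(n_i,n_i)$; as $f$ is harmonic, this yields $\nabla^2 f(n_i,n_i) = -\Delta^{\Sigma_i} f + H^{\Sigma_i} z$. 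Combining these with $X=\nabla^{\Sigma_i} f$,
\[
\tfrac12\frac{\partial}{\partial n_i}|\nabla f|^2 \;=\; \bigl(-\Delta^{\Sigma_i} f + H^{\Sigma_i} z\bigr)z + \langle \nabla^{\Sigma_i} f, \nabla^{\Sigma_i} z\rangle + A^{\Sigma_i}(\nabla^{\Sigma_i} f, \nabla^{\Sigma_i} f).
\]

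\emph{Step 3 (Conclusion).} Substituting this pointwise identity into the boundary integral from Step 1 yields exactly the stated inequality.

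\emph{Main obstacle.} The nontrivial bookkeeping is twofold: first, keeping consistent sign conventions for $n_i$ (inward) and for $H^{\Sigma_i}, A^{\Sigma_i}$ so that the ``$H z^2$'' term comes out with the right sign; and second, justifying the divergence theorem across the corner set $S$. The first is routine once the convention $A^{\Sigma_i}(X,Y)=\langle D_X Y, n_i\rangle$ is fixed. The second is the genuine analytic point of the proposition — but it reduces to the standard capacity estimate for sets of codimension two, which is where the hypothesis that $f$ be $C^0$ on $\overline\Omega$ and $C^3$-bounded on $\Omega\setminus S$ enters decisively.
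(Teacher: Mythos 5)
Your proposal is correct and follows essentially the same route as the paper: integrate the Bochner identity $\tfrac12\Delta|\nabla f|^2=|\nabla^2 f|^2$ for the harmonic function over $\Omega$, apply the divergence theorem despite the codimension-two corner set $S$ (your tubular-neighborhood exhaustion with the $C^3$ bound is exactly the justification the paper alludes to), and rewrite $\tfrac12\partial_{n_i}|\nabla f|^2$ face by face via the tangential--normal splitting and the Weingarten relation. Your sign conventions for $n_i$, $A^{\Sigma_i}$ and $H^{\Sigma_i}$ are consistent with the statement and with how the paper later applies it.
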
 

The inequality was obtained the same way as the standard Reilly formula by integrating the Bochner formula, noting that Stokes' theorem still applies with a singular set of codimension two. We remark that the same formula holds if $\Omega$ is replaced by a Riemannian manifold with non-negative Ricci curvature.

As an immediate application of Proposition \ref{P:Reilly}, we established in \cite[Lemma 2.4]{Fraser-Li14} an embedded Frankel property for free boundary minimal hypersurfaces in $\B^n$.

\begin{theorem}[Fraser-Li \cite{Fraser-Li14}]
\label{T:Frankel}
Any two embedded free boundary minimal hypersurfaces in $\B^n$ must intersect.
\end{theorem}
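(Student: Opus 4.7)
The plan is to argue by contradiction using Proposition~\ref{P:Reilly}. Suppose $\Sigma_1 \cap \Sigma_2 = \emptyset$. By embeddedness, $\B^n \setminus (\Sigma_1 \cup \Sigma_2)$ splits into connected components, and one can pick a component $\Omega$ whose closure meets both $\Sigma_1$ and $\Sigma_2$; its boundary then decomposes as
\[
\partial \Omega = \tilde{\Sigma}_1 \cup \tilde{\Sigma}_2 \cup T,
\]
with $\tilde{\Sigma}_i \subset \Sigma_i$ and $T \subset \partial \B^n$ a piece of the unit sphere. The orthogonality condition $\Sigma_i \perp \partial \B^n$ forces the corner set $S := \partial \tilde{\Sigma}_1 \cup \partial \tilde{\Sigma}_2$ to consist of right angles where a Dirichlet piece meets a Neumann piece.

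The test function is the harmonic solution $f$ of the mixed boundary-value problem
\[
\Delta f = 0 \ \text{in } \Omega, \qquad f|_{\tilde{\Sigma}_1} = 0, \qquad f|_{\tilde{\Sigma}_2} = 1, \qquad \frac{\partial f}{\partial n}\bigg|_{T} = 0,
\]
where $n$ is the inward unit normal of $\partial \Omega$. Standard elliptic theory for mixed Dirichlet--Neumann problems on domains with alternating right-angle corners produces an $f \in C^0(\overline{\Omega})$ that is smooth on $\Omega \setminus S$ with uniformly bounded $C^3$-norm, so Proposition~\ref{P:Reilly} applies.

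The core of the argument is to read off the three boundary contributions in Proposition~\ref{P:Reilly}. On each minimal piece $\tilde{\Sigma}_i$ the Dirichlet data is constant, so $\nabla^{\tilde{\Sigma}_i} f$ and $\Delta^{\tilde{\Sigma}_i} f$ both vanish, and minimality forces $H^{\tilde{\Sigma}_i} = 0$; every term of the integrand is therefore identically zero. On $T$ the Neumann condition $\partial_n f \equiv 0$ immediately kills the first two summands (the second because $\nabla^T \partial_n f \equiv 0$ as well), leaving only $A^T(\nabla^T f, \nabla^T f)$. Since $T$ sits on the unit sphere with inward normal $n = -x$, the defining formula $A(u,v) = (D_u v)^N$ from Section~\ref{S:Prelim} yields $A^T = g|_T$, so this term equals $|\nabla^T f|^2 \geq 0$. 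Proposition~\ref{P:Reilly} therefore produces
\[
0 \geq \int_T |\nabla^T f|^2,
\]
forcing $\nabla^T f \equiv 0$ on $T$.

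Combined with the Neumann condition, this shows that the full gradient $\nabla f$ vanishes along the open piece $T \subset \partial \Omega$, so the harmonic function $f$ has trivial Cauchy data (up to an additive constant) on $T$. Unique continuation---either Holmgren's theorem, or reflecting $f$ harmonically across $T$---then forces $f$ to be constant throughout $\Omega$, contradicting $f=0$ on $\tilde{\Sigma}_1$ and $f=1$ on $\tilde{\Sigma}_2$. The main subtlety I would watch out for is the regularity of $f$ at the right-angle corners where Dirichlet meets Neumann, which justifies the invocation of Proposition~\ref{P:Reilly} in this piecewise-smooth setting; once that is in place, the convexity of $\partial \B^n$ (encoded in the positivity $A^T = g|_T$) does all the geometric work.
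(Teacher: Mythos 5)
Your proof is correct and follows essentially the same route as the paper: the same mixed Dirichlet--Neumann harmonic function on the region between the two hypersurfaces, fed into Proposition~\ref{P:Reilly}, with the minimality of the $\Sigma_i$ pieces and the convexity of $\partial \B^n$ yielding $0 \geq \int_T |\nabla^T f|^2$. The only difference is at the very end, where the paper concludes directly that $f$ is constant on the spherical piece and contradicts the boundary values $0$ and $1$, while you add a unique-continuation step; this is a minor (and slightly more robust) variant of the same argument.
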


\begin{proof}
The proof is by contradiction. Suppose there exist two disjoint embedded free boundary minimal hypersurfaces $\Sigma_1$ and $\Sigma_2$ in $\B^n$. Let $\Omega$ be the domain bounded by $\Sigma_1$ and $\Sigma_2$. Note that $\Omega$ is a domain with piecewise smooth boundary $\partial \Omega=\Sigma_1 \cup \Sigma_2 \cup \Gamma$ where $\emptyset \neq \Gamma \subset \partial \B^n$. Consider the mixed boundary value problem
\[ \left\{ \begin{array}{cl}
\Delta f =0 & \text{in $\Omega$}\\
f=0 & \text{on $\Sigma_1$}\\
f=1 & \text{on $\Sigma_2$}\\
\partial f/\partial n = 0 & \text{on $\Gamma$},
\end{array} \right. \]
where $n$ is the outward unit normal of $\partial \B^n$, by elliptic theory there exists a solution $f \in C^{0,\alpha}(\Omega)$ which is smooth with uniform $C^3$ bound in $\Omega$ away from the singular set. Therefore, we can apply Reilly's formula in Proposition \ref{P:Reilly} to get
\[ 0 \geq \int_\Gamma |\nabla^\Gamma f|^2.\]
This implies that $f$ must be constant on $\Gamma$, which is a contradiction since $f=0$ on $\Sigma_1$ and $f=1$ on $\Sigma_2$.
\end{proof}

We remark that the embedded Frankel property also holds inside any Riemannian manifold with non-negative Ricci curvature and strictly convex boundary. This is a crucial ingredient in proving the existence of infinitely many embedded free boundary minimal hypersurfaces in such manifolds using a combination of Lusternik-Schnirelmann theory and Almgren-Pitts min-max theory \cite{Li-Zhou16} (see also \cite[Remark 1.8]{Marques-Neves17}).

\subsection{Steklov eigenvalue estimates}

Recall that the coordinate functions of $\R^n$ restricted to a free boundary minimal submanifold $\Sigma^k \subset \B^n$ are Steklov eigenfunctions with Steklov eigenvalue $1$. Therefore, $\sigma_1(\Sigma) \leq 1$. Inspired by Yau's famous conjecture that the first eigenvalue of the Laplacian on any embedded closed minimal hypersurface in $\Sph^n$ is equal to $n-1$, we conjecture the following in the free boundary setting:

\begin{question}
If $\Sigma \subset \B^n$ is an embedded free boundary minimal hypersurface, then $\sigma_1(\Sigma)=1$.
\end{question}

In \cite{Fraser-Li14}, we proved a lower bound of $\sigma_1(\Sigma)$ which is similar to the bound obtained by Choi and Wang \cite{Choi-Wang83} for closed embedded minimal hypersurface in $\Sph^n$.

\begin{theorem}[Fraser-Li \cite{Fraser-Li14}]
\label{T:FL-bound}
Let $\Sigma$ be an embedded free boundary minimal hypersurface in $\B^n$. Then $\sigma_1(\Sigma) \geq 1/2$.
\end{theorem}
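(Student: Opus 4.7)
My plan is to adapt the Choi--Wang approach \cite{Choi-Wang83} for the first Laplacian eigenvalue on embedded closed minimal hypersurfaces in $\mathbb{S}^n$ to the Steklov setting in $\B^n$, using the piecewise Reilly formula (Proposition \ref{P:Reilly}) as the main analytic tool.

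The first step is to exploit embeddedness: $\B^n \setminus \Sigma$ consists of two open components $\Omega_1, \Omega_2$ (here $\Sigma$ is orientable by the orientability proposition of Section 2), each with piecewise smooth boundary $\partial \Omega_i = \Sigma \cup \Gamma_i$, where $\Gamma_i \subset \mathbb{S}^{n-1}$. The free boundary condition ensures that $\Sigma$ meets $\Gamma_i$ orthogonally along $\partial \Sigma$, so the setup matches the hypotheses of Proposition \ref{P:Reilly}. Let $u$ be a first Steklov eigenfunction of $\Sigma$, normalized so that $\int_{\partial \Sigma} u \, ds = 0$, and denote by $\hat u$ its harmonic extension to $\Sigma$; in particular $\Delta_\Sigma \hat u = 0$ and $\partial \hat u/\partial \nu = \sigma_1 u$ on $\partial \Sigma$. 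The next step is to extend $\hat u$ into each $\Omega_i$ as the solution $F_i \in C^{0,\alpha}(\overline{\Omega}_i)$ of the mixed boundary value problem
\[
\Delta F_i = 0 \text{ in } \Omega_i, \qquad F_i = \hat u \text{ on } \Sigma, \qquad \partial F_i/\partial x = 0 \text{ on } \Gamma_i,
\]
so that $F_i$ has Dirichlet data on $\Sigma$ and Neumann data on the spherical part of the boundary.

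Applying Proposition \ref{P:Reilly} to $F_i$ on $\Omega_i$, the boundary contributions simplify substantially: on $\Sigma$ the first bracket vanishes because $H^\Sigma = 0$ (minimality) and $\Delta^\Sigma F_i|_\Sigma = \Delta^\Sigma \hat u = 0$; on $\Gamma_i$ the Neumann condition kills the first two brackets, and the convexity of $\B^n$ (so that $A^{\Gamma_i}$ equals the identity with respect to the inward unit normal) reduces the third to $|\nabla^{\Gamma_i} F_i|^2$. Summing the resulting inequalities over $i = 1,2$, the two $A^\Sigma$-contributions cancel because the inward normals $n_1, n_2$ to $\Sigma$ are opposite; an integration by parts on $\Sigma$, together with $\Delta^\Sigma \hat u = 0$ and the Steklov boundary condition $\partial \hat u/\partial \nu = \sigma_1 u$, then yields
\[
0 \geq \sigma_1 \int_{\partial \Sigma} u \, g \, ds + \sum_{i=1,2} \int_{\Gamma_i} |\nabla^{\Gamma_i} F_i|^2 \, d\mathcal{H}^{n-1},
\]
where $g := \partial F_1/\partial n_1 + \partial F_2/\partial n_2$ along $\Sigma$.

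The final step is to control the boundary integral $\int_{\partial \Sigma} u \, g \, ds$ and convert it into the Rayleigh quantity $\int_{\partial \Sigma} u^2 \, ds = \sigma_1^{-1} \int_\Sigma |\nabla \hat u|^2 \, da$. The geometric key is that, by the free boundary condition, the inward normal $n_i$ to $\Sigma$ at a point $p \in \partial \Sigma$ lies in $T_p \partial \B^n$ and coincides with the inward conormal of $\partial \Gamma_i$ inside $\Gamma_i$. Hence $\partial F_i/\partial n_i|_{\partial \Sigma}$ is a tangential derivative of $\phi_i := F_i|_{\Gamma_i}$, and a further integration by parts on $\Gamma_i$ combined with the Neumann condition rewrites $\int_{\partial \Sigma} u \, g \, ds$ as a linear combination of $\sum_i \int_{\Gamma_i} |\nabla^{\Gamma_i} \phi_i|^2$ and boundary terms of $\phi_i$ controlled by $\int_{\partial \Sigma} u^2 \, ds$ via a trace inequality on $\Gamma_i$. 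Tracking the resulting constants then isolates $\sigma_1$ and yields the sharp bound $\sigma_1(\Sigma) \geq 1/2$.

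The principal obstacle will be this last step: converting $\int_{\partial \Sigma} u \, g \, ds$ into an expression in which the factor $\tfrac{1}{2}$ appears cleanly requires careful bookkeeping of the $A^{\Gamma_i}$ and $H^{\Gamma_i}$ contributions at the corner, as well as justifying the integration by parts on $\Gamma_i$ despite the limited regularity of $F_i$ across $\partial \Sigma$.
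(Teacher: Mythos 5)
Your setup through the displayed inequality is correct: applying Proposition \ref{P:Reilly} on each component and cancelling the $\int_\Sigma A^\Sigma(\nabla^\Sigma\hat u,\nabla^\Sigma\hat u)$ terms by summing over the two sides is a legitimate variant of the paper's trick (Fraser--Li simply work on one side, choosing $\Omega_1$ or $\Omega_2$ so that this term is nonnegative). The genuine gap is in your final step, and it is caused by your choice of extension. With the Neumann condition $\partial F_i/\partial n=0$ on $\Gamma_i$, the restriction $\phi_i:=F_i|_{\Gamma_i}$ is \emph{not} intrinsically harmonic: decomposing the ambient Laplacian along $\Gamma_i\subset\Sph^{n-1}$ gives $\Delta^{\Gamma_i}\phi_i=-\partial^2_{nn}F_i$, which does not vanish in general. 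Hence the integration by parts you propose on $\Gamma_i$ yields $-\int_{\partial\Sigma}u\,\partial_{n_i}F_i\,ds=\int_{\partial\Gamma_i}\phi_i\,\partial_{\nu_{\Gamma_i}}\phi_i=\int_{\Gamma_i}\bigl(|\nabla^{\Gamma_i}\phi_i|^2+\phi_i\,\Delta^{\Gamma_i}\phi_i\bigr)$, and the second term has no sign and no available bound, so the inequality $0\geq\sigma_1\int_{\partial\Sigma}u\,g+\sum_i\int_{\Gamma_i}|\nabla^{\Gamma_i}\phi_i|^2$ does not close. Nor can a trace inequality rescue it: any trace constant depends on the geometry of $\Gamma_i$, whereas the sharp universal constant $1/2$ must come from an exact identity.

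That exact identity is precisely what the paper's extension is designed to produce, and no Neumann condition is imposed anywhere. Fraser--Li work on one component $\Omega$ with $\partial\Omega=\Sigma\cup\Gamma$ and build the test function by a two-stage \emph{Dirichlet} construction: first extend $u|_{\partial\Sigma}$ harmonically inside $\Gamma$ (intrinsic Laplacian on the spherical part), then solve the Dirichlet problem in $\Omega$ with this continuous data on $\Sigma\cup\Gamma$. Then both restrictions $f|_\Sigma=\hat u$ and $f|_\Gamma$ are intrinsically harmonic, so the two Reilly cross terms reduce by integration by parts to the corner integrals $\int_{\partial\Sigma}\frac{\partial f}{\partial\nu_\Sigma}\frac{\partial f}{\partial n_\Sigma}+\int_{\partial\Gamma}\frac{\partial f}{\partial\nu_\Gamma}\frac{\partial f}{\partial n_\Gamma}$, and the free boundary identities $\nu_\Sigma=-n_\Gamma$, $n_\Sigma=-\nu_\Gamma$ make the two contributions equal, giving exactly $-2\sigma_1\int_{\partial\Gamma}f\,\partial_{\nu_\Gamma}f=-2\sigma_1\int_\Gamma|\nabla^\Gamma f|^2$; against the $+\int_\Gamma|\nabla^\Gamma f|^2$ coming from the second fundamental form of the sphere this gives $\sigma_1\geq 1/2$ with nothing left to estimate. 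Your two-sided summation would also work fine with this Dirichlet extension, so the repair is to replace the mixed boundary value problem by the paper's data on $\Gamma_i$; the Neumann condition itself is the step that cannot be repaired. (A secondary point: the Dirichlet--Neumann corner also makes the uniform $C^3$ hypothesis of Proposition \ref{P:Reilly} more delicate than in the pure Dirichlet case, but that is not the main obstruction.)
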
 

\begin{proof}
Let $\Omega_1$ and $\Omega_2$ be the two connected components of $\B^n \setminus \Sigma$. Take $\Omega=\Omega_1$ and $\partial \Omega=\Sigma \cup \Gamma$ where $\Gamma \subset \partial \B^n$. Note that $\Gamma$ may not be connected. Let $u \in C^\infty(\Sigma)$ be a first Steklov eigenfunction of $\Sigma$. Since $\partial \Gamma=\partial \Sigma$, we can extend $u|_{\partial \Gamma}$ into $\Gamma$ harmonically. Finally, we can take the extended function $u$ on $\partial \Omega=\Sigma \cup \Gamma$ and extend once again harmonically into $\Omega$ to obtain a function $f$ in $\Omega$. By elliptic theory one can see that $f \in C^{1,\alpha}(\Omega) \cap C^\infty (\Sigma \setminus \partial \Sigma)$ with uniform $C^3$ bound away from the singular set $\partial \Sigma$. Plugging this harmonic function $f$ into Reilly's formula in Proposition \ref{P:Reilly}, we obtain
\[ 0 \geq \int_\Sigma \left( \langle \nabla^\Sigma f,\nabla \frac{\partial f}{\partial n_\Sigma} \rangle + A^\Sigma(\nabla^\Sigma f, \nabla^\Sigma f) \right) + \int_\Gamma \left( \langle \nabla^\Gamma f,\nabla^\Gamma \frac{\partial f}{\partial n_\Gamma} \rangle + |\nabla^\Gamma f|^2 \right), \]
where $n_\Sigma$ and $n_\Gamma$ are the inward (with respect to $\Omega$) unit normals of $\Sigma$ and $\Gamma$ respectively. By choosing $\Omega=\Omega_2$ if necessary, we can assume that $\int_\Sigma A^\Sigma(\nabla^\Sigma f, \nabla^\Sigma f) \geq 0$. Integrating by parts, we obtain
\[ 0 \geq \int_{\partial \Sigma} \frac{\partial f}{\partial \nu_\Sigma} \frac{\partial f}{\partial n_\Sigma} + \int_{\partial \Gamma} \frac{\partial f}{\partial \nu_\Gamma} \frac{\partial f}{\partial n_\Gamma} + \int_\Gamma |\nabla^\Gamma f|^2, \]
where $\nu_\Sigma$ and $\nu_\Gamma$ are the outward conormal vectors of $\partial \Sigma=\partial \Gamma$ with respect to $\Sigma$ and $\Gamma$ respectively. Using the free boundary condition, we have along $\partial \Sigma=\partial \Gamma$,
\[ \nu_\Sigma= -n_\Gamma  \qquad \text{ and } \qquad n_\Sigma=-\nu_\Gamma.\]
Therefore, we can rewrite the above inequality as
\[ 0 \geq -2 \sigma_1(\Sigma) \int_{\partial \Gamma} f \frac{\partial f}{\partial \nu_\Gamma} + \int_\Gamma |\nabla^\Gamma f|^2.\]
Integrating by part gives $\int_{\partial \Gamma} f \frac{\partial f}{\partial \nu_\Gamma}=\int_\Gamma |\nabla^\Gamma f|^2$. Thus, we obtain the inequality $\sigma_1(\Sigma) \geq 1/2$. Note that $f$ is non-constant on $\Gamma$ since $f|_{\partial \Sigma}=u$ is non-constant.
\end{proof}

With a bit more work, one can in fact show that we have the strict inequality $\sigma_1(\Sigma) >1/2$ (see \cite{Batista-Cunha16}). Nonetheless, our lower bound is sufficient to imply a smooth compactness theorem for embedded free boundary minimal surfaces in $\B^3$ to be described in the next section.

Although we are still far from having a definite answer to \textbf{Open question 7}, it has been verified that some of the known examples constructed in Section \ref{S:Existence} satisfy $\sigma_1(\Sigma)=1$. It is obvious for the Fraser-Schoen surfaces $\Sigma^{FS}_\ell$ since they are constructed by embeddings using the first Steklov eigenfunctions. It is easy to check by direct computation that all the higher dimensional critical catenoids have $\sigma_1=1$. In a recent work of McGrath \cite{McGrath18}, it was proved that $\sigma_1=1$ for any embedded free boundary minimal surface in $\B^3$ which is invariant under certain groups of reflections such that their fundamental domains satisfy some additional assumptions. In particular, this implies that the Folha-Pacard-Zolotareva surfaces $\Sigma_n^{FPZ}$ and $\tilde{\Sigma}_n^{FPZ}$ all have $\sigma_1=1$. McGrath's proof uses the nodal domain theorem for Steklov eigenfunctions and a symmetrization technique due to Choe and Soret \cite{Choe-Soret09}. It would be interesting to check whether the other known examples satisfy $\sigma_1=1$. See \cite{GL} for some related results by Girouard and Lagace.

\subsection{Smooth compactness results}

We now turn to the study of the space of all free boundary minimal hypersurfaces in $\B^n$. In particular, we are interested in what natural conditions would imply that the space is compact, under suitable topology. Along this direction, we obtained the first compactness result for embedded free boundary minimal surfaces in $\B^3$ with a fixed topological type. Our result is analogous to the celebrated compactness theorem of Choi and Schoen \cite{Choi-Schoen85} for embedded minimal surfaces in $\Sph^3$.

\begin{theorem}[Fraser-Li \cite{Fraser-Li14}]
Given any integers $g \geq 0$ and $k \geq 1$, the space of all embedded free boundary minimal surfaces in $\B^3$ with genus $g$ and $k$ boundary components is compact in the $C^\infty$-topology. In other words, any sequence $\{\Sigma_i\}_{i \in \N}$ of such minimal surfaces has a subsequence converging smoothly as a graph to a smooth embedded free boundary minimal surface $\Sigma_\infty$ in $\B^3$ of the same topological type.
\end{theorem}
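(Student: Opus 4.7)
The plan is to adapt the Choi-Schoen compactness theorem for closed embedded minimal surfaces in $\Sph^3$ to the free boundary setting in $\B^3$, using the Steklov eigenvalue estimate of Theorem \ref{T:FL-bound} in place of a Yau-type first-eigenvalue lower bound.

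First I would establish a uniform area bound depending only on $g$ and $k$. Combining $\sigma_1(\Sigma) \geq 1/2$ from Theorem \ref{T:FL-bound} with the coarse upper bound $\sigma_1(\Sigma)|\partial\Sigma| \leq 2\pi(g+k)$ from Proposition \ref{P:coarse-bound}, one gets $|\partial\Sigma| \leq 4\pi(g+k)$, and then $|\Sigma| = |\partial\Sigma|/2 \leq 2\pi(g+k)$ by Proposition \ref{P:length-area}. The next step is a uniform $L^2$ bound on the second fundamental form $A$. For a minimal surface in $\R^3$ the Gauss equation gives $K_\Sigma = -\tfrac{1}{2}|A|^2$, while the free boundary condition forces the geodesic curvature of $\partial\Sigma$ in $\Sigma$ to be identically $1$ (differentiate $|x|^2 \equiv 1$ along $\partial\Sigma$ and use that the outward conormal equals the position vector). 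Gauss-Bonnet then reads
\[ -\tfrac{1}{2}\int_\Sigma |A|^2 \, dA + |\partial\Sigma| = 2\pi(2-2g-k), \]
so $\int_\Sigma |A|^2 \leq C(g,k)$.

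The heart of the proof is then a standard $\epsilon$-regularity and point-picking scheme. There is a universal $\epsilon_0 > 0$ such that whenever a minimal surface in $\B^3$ satisfies $\int_{\Sigma \cap B_r(p)} |A|^2 < \epsilon_0$ one has $|A|^2(p) \leq Cr^{-2}$, and an analogous boundary $\epsilon$-regularity holds near $\partial\B^3$ for free boundary minimal surfaces in the spirit of Gr\"{u}ter-Jost. Because the total $L^2$-curvature is uniformly bounded, for each $\Sigma_i$ in the sequence curvature can concentrate in at most $N_0 \leq C(g,k)/\epsilon_0$ points, and along a subsequence these concentration points converge to a finite set $\mathcal{P} \subset \overline{\B^3}$. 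On compact subsets of $\overline{\B^3} \setminus \mathcal{P}$ the surfaces $\Sigma_i$ then enjoy uniform $C^{2,\alpha}$-estimates for the minimal surface equation, with boundary Schauder estimates up to $\partial\B^3$ coming from the Robin-type characterization in Theorem \ref{T:definitions}(3); Arzela-Ascoli produces a smooth subsequential limit $\Sigma_\infty$ which is a smooth embedded free boundary minimal surface on $\overline{\B^3} \setminus \mathcal{P}$.

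It remains to remove the singular set $\mathcal{P}$ and verify preservation of the topological type. Interior points of $\mathcal{P}$ are handled by the classical Choi-Schoen removable singularity theorem for stationary integer rectifiable varifolds of locally bounded area, while boundary points of $\mathcal{P} \cap \Sph^2$ require a free boundary analog; one natural route is inversion through $\Sph^2$, under which $\Sigma_\infty$ near a boundary singular point extends $C^1$ across $\Sph^2$ thanks to the orthogonal meeting, reducing the boundary case to the interior one on the doubled surface. Embeddedness together with $|\Sigma_\infty| = \lim_i |\Sigma_i|$ rules out multiplicity, so the convergence is as smooth graphs of multiplicity one. Finally, any loss of genus or of a boundary component would arise from a bubble pinching off at a concentration point, which after rescaling converges to a complete nonflat embedded minimal surface in $\R^3$ (or a free boundary minimal surface in a half-space) whose finite total curvature (enforced by our $L^2$ bound) would violate the Gauss-Bonnet equality applied to $\Sigma_i$ and to the limit. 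The main technical obstacle I expect is the boundary $\epsilon$-regularity and the accompanying free boundary removable singularity theorem: since neither Euclidean reflection across a plane nor spherical inversion is an isometry of $\B^3$, the reduction to the interior case requires a careful analysis of the orthogonal boundary condition.
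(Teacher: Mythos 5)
Your overall skeleton is the same as the paper's: boundary length bound from Proposition \ref{P:coarse-bound} combined with $\sigma_1 \geq 1/2$ (Theorem \ref{T:FL-bound}), area bound via Proposition \ref{P:length-area}, an $L^2$ curvature bound from Gauss--Bonnet using $k_g \equiv 1$ on $\partial\Sigma$, then $\epsilon$-regularity, subsequential convergence away from finitely many points, and removable singularities (the paper handles the boundary points by its own removable singularity theorem, which plays the role of the reflection/inversion analysis you flag as the main technical obstacle). Up to that point your proposal is a faithful reconstruction.

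The genuine gap is in your multiplicity-one step. You assert that ``embeddedness together with $|\Sigma_\infty| = \lim_i |\Sigma_i|$ rules out multiplicity,'' but neither ingredient does the work. Embeddedness is compatible with higher-multiplicity convergence: the Fraser--Schoen surfaces $\Sigma^{FS}_\ell$ converge to the equatorial disk with multiplicity two, and the doubling constructions behave similarly, so embeddedness alone excludes nothing. And the area identity $|\Sigma_\infty| = \lim_i |\Sigma_i|$ is not available a priori; under the varifold convergence you obtain from the area bound one only knows $\lim_i |\Sigma_i| = m\,|\Sigma_\infty|$ where $m$ is the multiplicity, so invoking the equality is assuming exactly what must be proved. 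The paper closes this step differently, and this is where the Steklov bound is used a second time: if the convergence had multiplicity $m \geq 2$, one builds a test function on $\Sigma_i$ taking distinct near-constant values on the different sheets (balanced so that its boundary integral vanishes), with a logarithmic cut-off near the finitely many concentration points; its Rayleigh quotient tends to zero, forcing $\sigma_1(\Sigma_i) \to 0$ and contradicting $\sigma_1 \geq 1/2$. Without this (or an equivalent) argument, multiplicity one --- and hence the graphical convergence and the preservation of the topological type, which in your outline also leans on curvature non-concentration in the multiplicity-one setting --- is not established.
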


\begin{proof}
We will briefly describe the main steps of the proof. First of all, the coarse upper bound in Proposition \ref{P:coarse-bound} and our Steklov eigenvalue lower bound in Theorem \ref{T:FL-bound} together imply a uniform bound on the boundary length $|\partial \Sigma| \leq 4\pi (g+k)$. By Proposition \ref{P:length-area} we also have a uniform area bound $|\Sigma| \leq 2 \pi(g+k)$. Using minimality of $\Sigma$, the Gauss equation and Gauss-Bonnet theorem, we have the following uniform $L^2$-bound on the second fundamental form:
\[ \int_\Sigma |A^\Sigma|^2 \; da= \int_\Sigma -2K^\Sigma \; da=-2 \left( 2 \pi \chi(\Sigma) -\int_{\partial \Sigma} k_g \; ds \right) \leq 4 \pi (4g+3k-2).\]
Here, the free boundary condition implies that the geodesic curvature $k_g$ of $\partial \Sigma$ inside $\Sigma$ is equal to $1$. By a general compactness result we know that any sequence $\{\Sigma_i\}_{i \in \N}$ of embedded free boundary minimal surfaces in $\B^3$ with genus $g$ and $k$ boundary components would have a subsequence converging (as a multi-sheeted graph) away from finitely many points $p_1,\cdots, p_N$ to a limit embedded free boundary minimal surface $\Sigma_\infty$ in $\B^3$. By a removable singularity theorem \cite[Theorem 4.1]{Fraser-Li14}, $\Sigma_\infty$ is smooth across each $p_1,\cdots, p_N$. Moreover, by a logarithmic cut-off trick together with the uniform bound $\sigma_1 \geq 1/2$ we see that the multiplicity is one.
\end{proof}

In higher dimensions, the control on topology is not enough to guarantee a smooth compactness theorem. For example, the $O(2)\times O(2)$ symmetric embedded examples $\Sigma_{2,2}(\ell)$ in $\B^4$ constructed by Freidin, Gulian and McGrath (see Theorem \ref{T:low-cohomo}) are all topologically $\B^2 \times \Sph^1$ but as $\ell \to \infty$ they converge to the minimal cone (which is singular at the origin) over the Clifford torus in $\Sph^3$. In \cite{Ambrozio-Carlotto-Sharp18}, Ambrozio, Carlotto and Sharp proved the following compactness result in higher dimensions (but still in codimension one):

\begin{theorem}[Ambrozio-Carlotto-Sharp \cite{Ambrozio-Carlotto-Sharp18}]
\label{T:ACS}
Given any integer $I \geq 0$ and positive constant $\Lambda>0$, the space of all embedded free boundary minimal hypersurfaces in $\B^n$ with area at most $\Lambda$ and Morse index at most $I$ is compact in the $C^\infty$-topology.
\end{theorem}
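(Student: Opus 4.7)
The plan is to follow the general strategy for compactness theorems for minimal hypersurfaces with bounded area and bounded Morse index, adapted to the free boundary setting. Let $\{\Sigma_i\}$ be a sequence of embedded free boundary minimal hypersurfaces in $\B^n$ with $|\Sigma_i| \leq \Lambda$ and $\ind(\Sigma_i) \leq I$. First I would extract a subsequence (not relabeled) converging in the varifold sense to a stationary integral varifold $V$ with free boundary in $\partial \B^n$; this uses only the mass bound and Allard-type compactness, together with the first variational characterization of free boundary minimal submanifolds from Theorem \ref{T:definitions}.

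The key step is to exploit the Morse index bound to control the concentration of curvature. I would argue that there exists a finite set $\mathcal{Y} \subset \B^n$, of cardinality bounded by a function of $I$, of ``bad'' points at which $|A_{\Sigma_i}|$ blows up. The mechanism is a standard point-picking/bubbling argument: if one could locate $I+1$ distinct concentration points, then after appropriate rescaling around each of them, one obtains non-trivial complete (possibly free boundary) minimal hypersurfaces in $\R^n$ or in a Euclidean half-space. Each non-planar blow-up limit supports a smooth, compactly supported section of its normal bundle on which the second variation is strictly negative; transplanting these sections back to $\Sigma_i$ with almost-disjoint supports would furnish $I+1$ orthogonal negative directions for $\delta^2 \Sigma_i$, contradicting $\ind(\Sigma_i) \leq I$. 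A planar blow-up is ruled out automatically at a curvature concentration point by the $\epsilon$-regularity theorem.

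Away from $\mathcal{Y}$, the resulting uniform $C^2$ bound on $\Sigma_i$, combined with standard interior and boundary elliptic regularity applied to the harmonic coordinate functions $x_k$ satisfying the Robin condition $\partial x_k/\partial \nu = x_k$, upgrades varifold convergence to smooth multi-sheeted graphical convergence on compact subsets of $\B^n \setminus \mathcal{Y}$ to a smooth embedded free boundary minimal hypersurface $\Sigma_\infty$. To promote this to smooth convergence across $\mathcal{Y}$, I would invoke a removable singularity theorem for stationary free boundary varifolds analogous to \cite[Theorem 4.1]{Fraser-Li14}, applicable once local density bounds are known. Finally, higher multiplicity must be ruled out: if the convergence were $m$-sheeted with $m \geq 2$, then rescaling pairwise distances between sheets would yield a nonnegative, non-trivial solution of the Jacobi equation on $\Sigma_\infty$ with Robin boundary condition $\partial u/\partial \nu = u$, forcing $\Sigma_\infty$ to be stable; combined with the bubble negative directions, one again obtains more than $I$ independent negative directions on $\Sigma_i$ for $i$ large via a logarithmic cutoff, a contradiction.

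The hardest part, and the place where the argument departs most sharply from the closed-manifold analogue, is the boundary analysis. When a concentration point $p \in \mathcal{Y}$ lies on $\partial \B^n$, the blow-up must be performed at a boundary point and its limit is a properly embedded minimal hypersurface in a half-space of $\R^n$ meeting the boundary hyperplane orthogonally; one must develop a satisfactory notion of Morse index for such non-compact free boundary minimal hypersurfaces, verify lower semicontinuity of index under the blow-up, and rule out planar half-space limits at curvature concentration points. The removable singularity step likewise demands a boundary version that handles tangent cones at points on $\partial \B^n$ where the ambient domain is only a half-space, using the orthogonality condition and a boundary maximum principle. Once these two boundary analogues are in place, the rest of the argument is a relatively standard adaptation of the Sharp/Choi-Schoen framework.
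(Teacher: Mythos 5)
The survey itself gives no proof of this theorem --- it is quoted directly from Ambrozio--Carlotto--Sharp \cite{Ambrozio-Carlotto-Sharp18} --- so there is no in-paper argument to compare against; judged on its own, your outline follows the correct general strategy of that paper (varifold limit from the mass bound, finitely many curvature concentration points controlled by the index, locally smooth graphical convergence away from them, a removable singularity step, and a multiplicity analysis), but two steps have genuine gaps. The first is the multiplicity step. You rule out $m \geq 2$ by saying the sheet-separation function makes $\Sigma_\infty$ stable and then, \emph{combined with the bubble negative directions}, you exceed index $I$. But a limit of multiplicity $m \geq 2$ can a priori arise with smooth convergence and no curvature concentration at all, in which case there are no bubbles and your route yields no contradiction. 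The clean fix in the unit ball is that stability of the limit is already impossible: embedded hypersurfaces in $\B^n$ are orientable (Section 2 of the survey), hence two-sided, and taking $W=N$ in the second variation formula gives $\delta^2\Sigma(N,N) = -\int_\Sigma |A|^2 \, da - |\partial \Sigma| < 0$, so there is no stable free boundary minimal hypersurface in $\B^n$. (Even then, producing the positive Jacobi-type function with the Robin condition $\partial u/\partial \nu + u = 0$ from the sheet separation requires genuine work near $\partial \B^n$.)

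The second gap is in the bubbling step. Your claim that every non-planar blow-up limit (in $\R^n$ or a half-space) carries a compactly supported strictly negative direction amounts to the assertion that complete two-sided stable minimal hypersurfaces with Euclidean volume growth, possibly with free boundary on a hyperplane, are flat; this is available only through Schoen--Simon type curvature estimates and only in low ambient dimensions, and indeed the original Ambrozio--Carlotto--Sharp theorem carries a dimensional restriction (roughly $3 \leq n \leq 7$) that both the survey's statement and your sketch suppress. The standard argument is in fact run in the contrapositive: at a concentration point the surfaces $\Sigma_i$ must be unstable in every sufficiently small (half-)ball, since otherwise the stability-based curvature estimates (and their free boundary analogue) would prevent blow-up of $|A_{\Sigma_i}|$ there; disjointness of these balls then bounds the number of bad points by $I$. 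Without this curvature-estimate input and its boundary version --- together with the boundary removable singularity theorem and the lower semicontinuity of index that you yourself flag as unresolved --- the proposal is a plausible program following the known approach rather than a complete proof.
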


In particular, Theorem \ref{T:ACS} implies that the family $\{\Sigma^{FGM}_{k_1,k_2}(\ell)\}_{\ell \in \N}$ of embedded free boundary minimal hypersurfaces in $\B^{k_1+k_2}$ constructed by Freidin-Gulian-McGrath in Theorem \ref{T:low-cohomo} has unbounded Morse index as $\ell \to \infty$. In fact, it was shown in \cite{Ambrozio-Carlotto-Sharp18} that the same smooth compactness theorem holds when one replaces the Morse index bound by a uniform lower bound on the $p$-th eigenvalue of the stability operator for some $p \in \N$. Such compactness results were established earlier by Sharp \cite{Sharp17} and Ambrozio-Carlotto-Sharp \cite{Ambrozio-Carlotto-Sharp16} for closed embedded minimal hypersurfaces in $\Sph^n$.

\section{Area bounds}
\label{S:Area}

In this final section, we study the area of free boundary minimal submanifolds in $\B^n$. It is a well-known result by Li and Yau \cite{Li-Yau82} that minimal surfaces in $\Sph^n$ maximize area in their conformal orbit. This has been generalized to higher dimensional minimal submanifolds in $\Sph^n$ by El Soufi and Ilias \cite{ElSoufi-Ilias86}. One is then naturally led to the following question in the free boundary setting:

\begin{question}
Let $\Sigma^k$ be an immersed free boundary minimal submanifold in $\B^n$, and $f:\B^n \to \B^n$ be a conformal diffeomorphism. Is it true that $|f(\Sigma)| \leq |\Sigma|$?
\end{question}

Very limited results are known. For $k=2$, Fraser and Schoen \cite{Fraser-Schoen11} showed that the boundary of any free boundary minimal surface in $\B^n$ maximizes length in its conformal orbit.

\begin{theorem}
\label{T:FS-conformal}
Let $\Sigma$ be an immersed free boundary minimal surface in $\B^n$. Suppose $f:\B^n \to \B^n$ is a conformal diffeomorphism. Then we have $|f(\partial \Sigma)| \leq |\partial \Sigma|$.
\end{theorem}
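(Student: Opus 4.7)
The approach is to combine Hersch's balancing trick with the variational characterization of the first Steklov eigenvalue $\sigma_1(\Sigma)$. Since orthogonal transformations of $\B^n$ preserve both the ball and Euclidean lengths, I may post-compose $f$ by such an isometry and reduce to the case where $f = \eta_a$ is the standard Möbius transformation sending $a \in \B^n$ to the origin. Its 1-dimensional conformal factor on $\partial \B^n$ is $\lambda_a(y) = (1-|a|^2)/|y-a|^2$, so that, using that $\varphi$ parametrizes $\partial \Sigma$ isometrically by arclength,
\[
|f(\partial \Sigma)| \;=\; \int_{\partial \Sigma} \lambda_a(\varphi(p))\, ds(p).
\]
The goal is to show this is at most $|\partial \Sigma| = \int_{\partial \Sigma} 1\, ds$.

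Next, I would apply Hersch's topological trick to the pushforward of $ds$ on $\partial \Sigma$ under $\varphi|_{\partial \Sigma}$: there exists a Möbius transformation $\eta$ of $\B^n$ such that $F = \eta \circ f \circ \varphi : \Sigma \to \B^n$ is a conformal immersion satisfying $\int_{\partial \Sigma} F_i\, ds = 0$ for each $i = 1, \dots, n$. Because both $f$ and $\eta$ are conformal and $\varphi$ meets $\partial \B^n$ orthogonally, so does $F(\Sigma)$ along $F(\partial \Sigma) \subset \partial \B^n$, and $|F|^2 = 1$ on $\partial \Sigma$. Feeding the balanced components $F_i$ into the variational characterization of $\sigma_1(\Sigma)$, and summing while using conformality of $F$ (hence $\int_\Sigma |\nabla F|^2\, dA = 2|F(\Sigma)|$), one obtains
\[
\sigma_1(\Sigma)\,|\partial \Sigma| \;\leq\; 2\,|F(\Sigma)|.
\]
By Theorem \ref{T:definitions}(3), the coordinates of $\varphi$ are Steklov eigenfunctions with eigenvalue $1$, so $\sigma_1(\Sigma) \leq 1$.

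Finally, I would exploit the identity obtained by integrating the pointwise relation $\Delta_\Sigma |F|^2 = 2|\nabla F|^2 + 2F\cdot\Delta_\Sigma F$ together with the orthogonal-meeting condition, which yields a length--area formula for $F(\Sigma)$ analogous to Proposition \ref{P:length-area}. Combined with the Steklov bound above and the fact that $|\partial \Sigma| = 2|\Sigma|$ for the free boundary minimal $\Sigma$, this should convert the area control into the desired length inequality. The main obstacle is Hersch-reconciliation: composition by $\eta$ changes the boundary length, so the bound naturally obtained for $F = \eta \circ f \circ \varphi$ must be translated back to $f \circ \varphi$. The cleanest way around this is a variational argument showing that the FBMS $\varphi$ is a \emph{maximum} of boundary length within the conformal orbit, since the first variation vanishes (conformal Killing fields of $\B^n$ are tangent to $\partial \B^n$ along $\partial \Sigma$, so the boundary term in the first variation of area dies thanks to $\nu = \varphi$), and the second variation is non-positive because the conformal Killing fields form a finite-dimensional Lie algebra on which the index form restricts as a non-positive bilinear form.
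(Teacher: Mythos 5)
Your proposal has two genuine gaps. First, the Hersch-balancing/Steklov detour produces an inequality pointing the wrong way: after balancing you get $\sigma_1(\Sigma)\,|\partial\Sigma| \leq 2\,|F(\Sigma)|$, but all that is known (and all that Theorem \ref{T:definitions}(3) gives) is $\sigma_1(\Sigma)\leq 1$, so this cannot be turned into a lower bound for $|\partial\Sigma|$, and in any case it controls the area of the renormalized image $F(\Sigma)$ rather than the quantity $|f(\partial\Sigma)|$ you need. Moreover $F=\eta\circ f\circ\varphi$ is only a conformal immersion: conformal diffeomorphisms of $\B^n$ do not preserve minimality, so the coordinates of $F$ are not harmonic, the Robin condition $\partial F_i/\partial\nu=F_i$ fails, and the length--area identity of Proposition \ref{P:length-area} that you invoke for $F(\Sigma)$ is simply not available. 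Second, your ``Hersch-reconciliation'' step is where the actual content lies, and as written it is circular: asserting that $\varphi$ is a \emph{maximum} of boundary length within its conformal orbit is precisely the statement of Theorem \ref{T:FS-conformal}. Vanishing first variation (which does follow from the balancing $\int_{\partial\Sigma}x_i\,ds=0$) plus non-positive second variation along conformal Killing fields would only give local, second-order maximality --- and that second-variation statement is itself a nontrivial theorem of Fraser--Schoen, not a consequence of the conformal fields forming a finite-dimensional Lie algebra. Since the conformal orbit is non-compact (M\"obius maps pushing mass toward $\partial\B^n$), local maximality does not globalize without further argument.

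For comparison, the paper's proof is a single direct computation requiring none of this machinery: writing $f^*g_{\text{Eucl}}=\bigl((|y|^2-1)/|x-y|^2\bigr)^2 g_{\text{Eucl}}$ for some $|y|>1$, one applies the first variation formula (divergence theorem) on $\Sigma$ to the vector field $X=(x-y)/|x-y|^2$, which is admissible because the free boundary condition makes the conormal equal to the position vector. Two-dimensionality of $\Sigma$ gives $\Div_\Sigma X\geq 0$ pointwise, while on $\partial\B^n$ one computes $\langle X,x\rangle=\tfrac12\bigl(1-(|y|^2-1)/|x-y|^2\bigr)$; since $\partial\Sigma$ is one-dimensional, the boundary integral of the conformal factor is exactly $|f(\partial\Sigma)|$, and the inequality $0\leq\tfrac12\bigl(|\partial\Sigma|-|f(\partial\Sigma)|\bigr)$ follows at once. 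If you want to salvage your strategy, you would need to prove global (not just infinitesimal) monotonicity of boundary length along the conformal orbit, which is essentially equivalent to redoing this divergence argument.
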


\begin{proof}
The original proof in \cite{Fraser-Schoen11} uses the conformal invariance of the trace-free second fundamental form of $\Sigma$. We recall here an alternative proof taken from \cite{Fraser-Schoen13} using a first variation argument. Using our understanding of the conformal group of $\B^n$, there exists some $y \in \R^n$ with $|y| >1$ such that
\[ f^*g_{\text{Eucl}}=\left(\frac{|y|^2-1}{|x-y|^2}\right)^2g_{\text{Eucl}},\]
where $g_{\text{Eucl}}$ is the Euclidean metric on $\B^n$. Define a vector field $X=(x-y)/|x-y|^2$ on $\B^n$. Note that $X$ is not tangential to $\partial \B^n$. However, using that $\Sigma$ is a free boundary minimal surface, the divergence theorem on $\Sigma$ implies
\[ \int_\Sigma \Div_\Sigma X \; da =\int_{\partial \Sigma} \langle X,x \rangle \; ds.\]
By a direct computation, we have 
\[ \Div_\Sigma X = \frac{2}{|x-y|^2} -\frac{2 |(x-y)^T|^2}{|x-y|^4} \geq 0,\] 
\[ \langle X, x \rangle = \frac{1}{2} \left( 1- \frac{|y|^2-1}{|x-y|^2} \right).\]
Plugging these into the equality above, using the fact that $\partial \Sigma$ is one-dimensional, 
\[ 0 \leq \frac{1}{2} \int_{\partial \Sigma} \left( 1- \frac{|y|^2-1}{|x-y|^2} \right) \; ds =\frac{1}{2} \big(|\partial \Sigma| -|f(\partial \Sigma)|\big).\]
\end{proof}

By a conformal blow-up at any boundary point $p \in \partial \Sigma$, we have the following sharp area lower bound for free boundary minimal surfaces in $\B^n$. Note that $2 |\Sigma|=|\partial \Sigma|$ by Proposition \ref{P:length-area}.

\begin{theorem}[Fraser-Schoen \cite{Fraser-Schoen11}]
Let $\Sigma^2$ be an immersed free boundary minimal surface in $\B^n$ Then, 
\[ |\Sigma^2| \geq |\B^2|=\pi \]
and equality holds if and only if $\Sigma^2$ is an equatorial plane disk.
\end{theorem}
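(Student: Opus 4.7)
The plan is to combine Proposition \ref{P:length-area}, which gives $2|\Sigma|=|\partial\Sigma|$, with Theorem \ref{T:FS-conformal}, which yields $|f(\partial\Sigma)|\leq|\partial\Sigma|$ for every conformal diffeomorphism $f$ of $\B^n$. It therefore suffices to exhibit a family $\{f_\epsilon\}$ of such conformal diffeomorphisms for which $\lim_{\epsilon\to 0}|f_\epsilon(\partial\Sigma)|=2\pi$; together these would imply $|\partial\Sigma|\geq 2\pi$ and hence $|\Sigma|\geq\pi$.

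I would fix any boundary point $p\in\partial\Sigma\subset\Sph^{n-1}$ and set $y_\epsilon:=(1+\epsilon)p\in\R^n\setminus\overline{\B^n}$ for small $\epsilon>0$. The M\"obius transformation $f_\epsilon$ associated to $y_\epsilon$ (as in the proof of Theorem \ref{T:FS-conformal}) has conformal factor $\lambda_\epsilon(x)=(|y_\epsilon|^2-1)/|x-y_\epsilon|^2=(2\epsilon+\epsilon^2)/|x-y_\epsilon|^2$, and the transformed boundary length is $|f_\epsilon(\partial\Sigma)|=\int_{\partial\Sigma}\lambda_\epsilon\, ds$. Fix any $\delta>0$. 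On $\partial\Sigma\setminus B_\delta(p)$, $\lambda_\epsilon\leq C\epsilon/\delta^2$, so this piece contributes at most $C\epsilon|\partial\Sigma|\to 0$. Near $p$, parametrize $\partial\Sigma$ by arc length $s$ with $x(0)=p$ and unit tangent $\tau:=\dot x(0)$. Since $\partial\Sigma\subset\Sph^{n-1}$, $\langle\tau,p\rangle=0$, so a Taylor expansion yields
\[|x(s)-y_\epsilon|^2=s^2+\epsilon^2+O(\epsilon s^2+s^3).\]
The substitution $s=\epsilon u$ plus dominated convergence gives
\[\int_{-\delta}^{\delta}\lambda_\epsilon(x(s))\,ds\;\longrightarrow\;\int_{-\infty}^{\infty}\frac{2\,du}{u^2+1}\;=\;2\pi,\]
which, combined with Theorem \ref{T:FS-conformal}, completes the proof of the inequality.

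For the equality case, assume $|\Sigma|=\pi$ so that $|\partial\Sigma|=2\pi$. The proof of Theorem \ref{T:FS-conformal} in fact produces the explicit defect identity
\[|\partial\Sigma|-|f_\epsilon(\partial\Sigma)|\;=\;4\int_\Sigma\frac{|(x-y_\epsilon)^N|^2}{|x-y_\epsilon|^4}\, da,\]
which is nonnegative and, by the analysis above, tends to zero as $\epsilon\to 0$. I would then apply a conformal inversion at $p$ to turn $\B^n$ into the upper half-space $\R^n_+$, rescale, and extract a smooth blow-up limit of $\Sigma$ at $p$; this limit is a minimal surface in $\R^n_+$ meeting the bounding hyperplane orthogonally, and the vanishing defect forces it to be a flat half-plane through $p$. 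A unique-continuation / Bj\"orling-type argument (as in \cite[Corollary 3.9]{Kapouleas-Li17}) then propagates flatness along $\Sigma$, forcing $\Sigma$ to be the tangent equatorial plane disk. The main obstacle is precisely this rigidity step: the inequality itself follows immediately from Theorem \ref{T:FS-conformal} and a one-dimensional arctangent integral, but upgrading the \emph{infinitesimal} vanishing of the defect near $p$ to global flatness of $\Sigma$ requires a delicate limiting argument using the two-dimensional conformal structure.
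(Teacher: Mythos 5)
Your proof of the inequality is correct and follows exactly the route the survey indicates: combine Proposition \ref{P:length-area} with Theorem \ref{T:FS-conformal} and a conformal blow-up at a boundary point $p\in\partial\Sigma$. Your explicit computation with $y_\epsilon=(1+\epsilon)p$ (the off-$p$ contribution being $O(\epsilon)$ and the near-$p$ contribution converging to $\int_{-\infty}^{\infty}2(u^2+1)^{-1}du=2\pi$ after the substitution $s=\epsilon u$) is a correct implementation of that blow-up, and the defect identity you extract from the proof of Theorem \ref{T:FS-conformal},
\[
|\partial\Sigma|-|f_\epsilon(\partial\Sigma)|=4\int_\Sigma\frac{|(x-y_\epsilon)^\perp|^2}{|x-y_\epsilon|^4}\,da,
\]
with $(\cdot)^\perp$ the component normal to $T_x\Sigma$, is also correct (it is just the divergence identity there with $|x-y|^2-|(x-y)^T|^2=|(x-y)^\perp|^2$).

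The equality case, however, has a genuine gap as written. A blow-up (Euclidean rescaling, or inversion to the half-space picture) of a surface that is smooth up to the boundary \emph{always} converges to its tangent half-plane at $p$, whether or not the defect vanishes; so the step ``the vanishing defect forces the blow-up limit to be a flat half-plane'' carries no information. Moreover, conformal images of $\Sigma$ under the M\"obius maps $f_\epsilon$ need not be minimal (that is precisely the content of \textbf{Open Question 8}), so the limit object cannot be treated as a free boundary minimal surface without further argument. Finally, unique continuation or a Bj\"orling-type theorem cannot upgrade first-order tangency to a plane \emph{at a single boundary point} into global flatness; those results require coincidence of Cauchy data along a curve or vanishing to infinite order. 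The good news is that your own defect identity already gives a clean rigidity argument with no blow-up limit at all: if $|\Sigma|=\pi$, then $|\partial\Sigma|=2\pi$ while $|f_\epsilon(\partial\Sigma)|\to 2\pi$, so the defect tends to $0$; for any compact $K\subset\Sigma$ with $\dist(K,p)>0$ the integrand converges uniformly, whence by Fatou $\int_K|(x-p)^\perp|^2|x-p|^{-4}\,da=0$, i.e.\ $x-p\in T_x\Sigma$ at every point of $\Sigma$. Writing $W=x-p$, for any tangent vector $u$ one has $A(u,W)=(D_u(x-p))^\perp=u^\perp=0$; since $\dim\Sigma=2$ and $\Tr A=0$, choosing a frame with $e_1=W/|W|$ forces $A\equiv 0$ wherever $W\neq 0$, hence everywhere by continuity. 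Thus $\Sigma$ is totally geodesic, lies in a $2$-plane, and the free boundary condition then forces the plane to pass through the origin and $\Sigma$ to be the equatorial disk.
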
 

By a direct monotonicity-type argument, Brendle was abled to generalize Fraser and Schoen's result to any dimension and codimension.

\begin{theorem}[Brendle \cite{Brendle12}]
\label{T:Brendle}
Let $\Sigma^k$ be an immersed free boundary minimal submanifold in $\B^n$ Then, 
\[ |\Sigma^k| \geq |\B^k| \]
and equality holds if and only if $\Sigma^k$ is congruent to $\B^k$.
\end{theorem}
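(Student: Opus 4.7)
The approach I would take is a monotonicity-type argument that generalizes the classical formula for minimal submanifolds in $\R^n$, with the free boundary condition used to control the boundary contributions. Fix any interior point $y \in \Sigma \setminus \partial \Sigma$ (which exists since $\Sigma$ has nonempty interior), and for $r > 0$ set $V(r) := |\Sigma \cap B_r(y)|$. The plan is to establish a monotonicity statement for (a suitable modification of) $V(r)/r^k$, starting from the density value at $y$ and extending to values of $r$ for which $\Sigma \subset B_r(y)$, so that $V(r)/r^k$ becomes $|\Sigma|/r^k$.

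First I would derive the monotonicity identity by applying the divergence theorem to the vector field $x - y$ on $\Sigma \cap B_r(y)$, using $\Div_\Sigma (x-y) = k$ from minimality, together with the coarea formula for $V'(r)$. A direct manipulation gives
\[
 r^{k+1} \frac{d}{dr}\!\left[\frac{V(r)}{r^k}\right] = \int_{\Sigma \cap \partial B_r(y)} \frac{|(x-y)^\perp|^2}{|(x-y)^T|} \, ds - \int_{\partial \Sigma \cap B_r(y)} \langle x-y, \nu\rangle \, ds,
\]
where $(x-y)^T$ and $(x-y)^\perp$ denote the components tangent and normal to $\Sigma$. Next I would use the free boundary condition $\nu = x$ on $\partial \Sigma$ to rewrite the boundary term as $\int_{\partial \Sigma \cap B_r(y)} (1 - \langle y, x\rangle) \, ds$, which is nonnegative for $y \in \interior \B^n$. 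For $r \leq 1 - |y|$ the boundary term is absent and classical monotonicity gives $V(r) \geq \omega_k r^k$, so the density of $\Sigma$ at the smooth point $y$ provides the initial estimate.

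The heart of the argument is to extend this beyond $r = 1 - |y|$ all the way to some $r_* \geq 1 + |y|$ for which $\Sigma \subset B_{r_*}(y)$ and hence $V(r_*) = |\Sigma|$. Since the boundary term has the ``wrong sign'' for naive monotonicity, I would introduce a boundary correction $B(r)$, depending on an integral of $1 - \langle y, x\rangle$ over $\partial \Sigma \cap B_r(y)$, and show that the modified quantity $\Phi(r) := V(r)/r^k - B(r)$ \emph{is} monotone nondecreasing. To compensate for the correction at the end and recover a clean lower bound on $|\Sigma|$, I would exploit the balancing identity $\int_{\partial \Sigma} x \, ds = 0$ of Proposition 2.5, which forces the averaged contribution of $\langle y, x\rangle$ to vanish, together with $k|\Sigma| = |\partial \Sigma|$ from Proposition 2.4. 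The upshot, after integrating $\Phi'(r) \geq 0$ from $r \to 0^+$ (where $\Phi(r) \to \omega_k$) to $r = r_*$, should be $|\Sigma| \geq \omega_k$.

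For the rigidity statement, equality in the monotonicity forces the first integrand $|(x-y)^\perp|^2/|(x-y)^T|$ to vanish on $\Sigma$, i.e., $x - y$ is everywhere tangent to $\Sigma$, so $\Sigma$ is a cone over $y$. Combined with $\Sigma \subset \B^n$ compactness and the orthogonal intersection with $\partial \B^n$, this forces $\Sigma$ to coincide with an equatorial $k$-disk through $y$. The main obstacle I anticipate is Step 2: identifying the precise correction $B(r)$ and verifying that $\Phi$ is monotone. The difficulty comes from the fact that $\partial \Sigma \cap B_r(y)$ evolves nontrivially with $r$, so $B'(r)$ involves both a codimension-two integral over $\partial \Sigma \cap \partial B_r(y)$ and a positive bulk term; making these cancellations work out sharply is the crux, and is exactly where the free boundary condition and the balancing property of $\partial \Sigma$ need to interact.
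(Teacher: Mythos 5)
Your Step 2 is not a technical point to be deferred --- it is the theorem itself, and in the form you describe it cannot be carried out. Test the scheme on the equatorial disk $\B^k\subset\B^n$ with an off-center interior base point $y\neq 0$: then $r_*\geq 1+|y|>1$, $V(r_*)=|\Sigma|=\omega_k$, so $V(r_*)/r_*^k<\omega_k$. Hence no quantity of the form $\Phi(r)=V(r)/r^k-B(r)$ with $B\geq 0$ (as an integral of the nonnegative quantity $1-\langle y,x\rangle$ suggests) and $\Phi(0^+)=\omega_k$ can be nondecreasing up to $r_*$; your scheme applied to this example would yield the false inequality $|\Sigma|\geq\omega_k r_*^k$. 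So the ``correction'' must in effect be added rather than subtracted, and then the concluding step --- recovering $|\Sigma|\geq|\B^k|$ from $\Phi(r_*)=|\Sigma|/r_*^k-B(r_*)$ --- requires a sharp quantitative bound on $B(r_*)$ that is equivalent to the statement being proved. The tools you invoke for that step cannot supply it: the balancing identity $\int_{\partial\Sigma}x\,ds=0$ and the identity $k|\Sigma|=|\partial\Sigma|$ constrain only integrals over the \emph{whole} of $\partial\Sigma$, whereas your correction involves the partial integrals over $\partial\Sigma\cap B_r(y)$ for every intermediate $r$, for which no balancing holds. Indeed, applying the divergence theorem to $x-y$ over all of $\Sigma$ and using the balancing identity makes the $y$-dependence cancel exactly and returns nothing beyond $k|\Sigma|=|\partial\Sigma|$ (Proposition \ref{P:length-area}), which for $k\geq 3$ gives no area lower bound. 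The rigidity discussion, while plausible in outline, rests on the unproven monotonicity, so it too is incomplete.

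For comparison: the survey does not reproduce a proof (it cites \cite{Brendle12} and describes it as a direct monotonicity-type argument), and Brendle's published argument differs from yours at exactly the point where yours breaks down. It is based at a point $y\in\partial\Sigma\subset\Sph^{n-1}$ rather than at an interior point: along $\partial\Sigma$ the free boundary condition $\nu=x$ gives $\langle x-y,\nu\rangle=1-\langle x,y\rangle=\tfrac{1}{2}|x-y|^2$, so the boundary flux of the field $(x-y)/|x-y|^k$ is of lower order and can be handled by an explicitly constructed correction vector field whose tangential divergence is controlled using only minimality; the sharp constant $|\B^k|$ then comes from the density of $\Sigma$ at the free-boundary pole (which is at least one half of the ball density), and equality forces $\Sigma$ to be a cone over the boundary point, hence an equatorial disk. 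Note also that for $k=2$ the bound follows from the boundary-length estimate of Theorem \ref{T:FS-conformal} together with $2|\Sigma|=|\partial\Sigma|$, but for $k\geq 3$ the singular weight $|x-y|^{2-k}$ obstructs the naive argument; any correct proof must confront this, and your sketch does not.
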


Note that the sharp area bounds above imply the sharp isoperimetric inequality for free boundary minimal submanifolds in $\B^n$ (see \cite[Corollary 5.5]{Fraser-Schoen11} and \cite[Corollary 5]{Brendle12}). Motivated by Theorem \ref{T:FS-conformal}, Fraser and Schoen conjectured that the same statement hold in arbitrary dimension and codimension.

\begin{question}
Let $\Sigma^k$ be an immersed free boundary minimal submanifold in $\B^n$. Suppose $f:\B^n \to \B^n$ is a conformal diffeomorphism. Is it true that $|f(\partial \Sigma)| \leq |\partial \Sigma|$?
\end{question}

Note that an affirmative answer to \textbf{Open Question 9} would imply Theorem \ref{T:Brendle}. When $\Sigma$ is a cone, it follows from the corresponding results of Li-Yau and El Soufi-Ilias applied to its link in $\Sph^{n-1}$. Moreover, Fraser and Schoen \cite[Theorem 3.8]{Fraser-Schoen13} proved that the boundary volume of $\partial \Sigma$ decreases up to second order under conformal changes. On the other hand, it is not known that the volume of $\Sigma$ itself decreases up to second order under conformal changes. However, it can be checked numerically that the critical catenoid does satisfy the conclusion of \textbf{Open Question 8}.

After knowing that $\B^k$ gives the sharp lower bound for free boundary minimal submanifolds in $\B^n$, it is natural to ask what is the next smallest area, at least when $k=2=n-1$. In the proof of Willmore conjecture by Marques and Neves \cite{Marques-Neves14}, a key step in their argument is to show that the Clifford torus in $\Sph^3$ is the (unique) closed minimal surface with smallest area other than the geodesic spheres. In view of this spectacular result, we conjecture the following:

\begin{question}
Let $\Sigma^2$ be an immersed free boundary minimal surface in $\B^3$ which is not the equatorial disk. Is it true that $|\Sigma| \geq |\mathbb{K}|$? 
\end{question}

The question above should be closely related to the sharp Morse index lower bound conjectured in \textbf{Open Question 6}.

\bibliographystyle{amsplain}
\bibliography{references}

\end{document}